\newtheorem{thm}{Theorem}[section]
\newtheorem{lem}[thm]{Lemma}
\newtheorem{prop}[thm]{Proposition}
\theoremstyle{definition}
\theoremstyle{remark}
\numberwithin{equation}{section}
\definecolor{esperance}{rgb}{0.0,0.5,0.0}
\newcommand{\bd}{\mathbf{d}}
\newcommand{\bm}{\mathbf{m}}
\newcommand{\bp}{\mathbf{p}}
\newcommand{\bq}{\mathbf{q}}
\newcommand{\bt}{\mathbf{t}}
\newcommand{\bx}{\mathbf{x}}
\newcommand{\by}{\mathbf{y}}
\newcommand{\supp}{\mathrm{supp\,}}
\newcommand{\del}{\delta}
\newcommand{\eps}{\epsilon}
\newcommand{\sig}{\sigma}
\newcommand{\om}{\omega}
\newcommand{\cB}{\mathcal{B}}
\newcommand{\cC}{\mathcal{C}}
\newcommand{\cD}{\mathcal{D}}
\newcommand{\cE}{\mathcal{E}}
\newcommand{\cF}{\mathcal{F}}
\newcommand{\cI}{\mathcal{I}}
\newcommand{\cJ}{\mathcal{J}}
\newcommand{\cL}{\mathcal{L}}
\newcommand{\cM}{\mathcal{M}}
\newcommand{\cN}{\mathcal{N}}
\newcommand{\cP}{\mathcal{P}}
\newcommand{\cS}{\mathcal{S}}
\newcommand{\cT}{\mathcal{T}}
\newcommand{\cZ}{\mathcal{Z}}
\newcommand{\bR}{\mathbb{R}}
\newcommand{\bZ}{\mathbb{Z}}
\newcommand{\bQ}{\mathbb{Q}}
\newcommand{\bN}{\mathbb{N}}
\newcommand{\bT}{\mathbb{T}}
\newcommand{\bS}{\mathbb{S}}
\newcommand{\sB}{\mathsf{B}}
\newcommand{\SL}{\operatorname{SL}}
\newcommand{\ASL}{\operatorname{ASL}}
\newcommand{\Id}{\operatorname{Id}}
\newcommand\set[1]{\left\{#1\right\}}
\newcommand\on[1]{\operatorname{#1}}
\newcommand\diag[1]{\operatorname{diag}\left(#1\right)}
\newcommand\mb[1]{\mathbf{#1}}
\newcommand\tb[1]{\textbf{#1}}
\newcommand\crly[1]{\mathscr{#1}}
\newcommand{\wstar}{\overset{\on{w}^*}{\lra}}
\newcommand{\Supp}{\on{Supp}}
\newcommand{\defn}{\overset{\on{def}}{=}}
\newcommand{\lra}{\longrightarrow}
\newcommand{\onto}{\xymatrix{\ar@{>>}[r]&}}
\newcommand{\eq}[1]
{
\begin{equation*}
{#1}
\end{equation*}
}
\newcommand{\eqlabel}[2]
{
\begin{equation}
{#2}\label{#1}
\end{equation}
}
\newcommand*{\rom}[1]{\expandafter\@slowromancap\romannumeral #1@}
\begin{document}

\title[Effective equidistribution in $\SL_d(\bR)\ltimes \bR^d/\SL_d(\bZ)\ltimes \bZ^d$]{Effective equidistribution of expanding translates in the space of affine lattices}
\author{Wooyeon Kim}
\address{Wooyeon Kim. Department of Mathematics, ETH Z\"{u}rich, 
{\it wooyeon.kim@math.ethz.ch}}

\thanks{}


\keywords{}

\def\thefootnote{}
\footnote{2020 {\it Mathematics
Subject Classification}: Primary 37A17 ; Secondary 11K60, 22F30.}   
\def\thefootnote{\arabic{footnote}}
\setcounter{footnote}{0}

\begin{abstract}
We prove a polynomially effective equidistribution result for expanding translates in the space of $d$-dimensional affine lattices for any $d\ge 2$.
\end{abstract}

\maketitle
\section{Introduction}
\subsection{Backgrounds}
In the theory of unipotent dynamics on homogeneous spaces, a fundamental result is Ratner's theorems \cite{Rat91a, Rat91b} on measure rigidity, topological rigidity, and orbit equidistribution. They have numerous and diverse applications ranging from number theory to mathematical physics, see e.g. \cite{EMS98, EM04, EO06, Sha09, Sha10, Mrk10, MS10}. We refer the reader to \cite{Mor05} for expositions and references.

A limitation of Ratner's results is that they do not tell us an explicit rate of density or equidistribution for unipotent orbits. As pointed out in \cite[Problem 7]{Mrg00}, establishing \textit{effective} versions of Ratner's results has been a central topic in homogeneous dynamics.

We say that a subgroup $H\subset G$ is (unstable) horospherical if there exists an element $a\in G$ such that
\eqlabel{horo1}{H=\set{g\in G: a^kga^{-k}\rightarrow e \quad\textrm{as}\quad k\to -\infty}.}
For horospherical subgroups, effective equidistribution results have been established with a \textit{polynomially} strong error term \cite{KM96, KM12, Ven10, DKL16, KSW17, Shi19} using the mixing property of $a$ on $G/\Gamma$ and the so-called ``thickening" method, originating in Margulis' thesis \cite{Mrg04}. We also refer the reader to the effective equidistribution results for horocycle orbits in $\SL_2(\bR)/\Gamma$ \cite{Sar81,Bur90,FF03,Str04,Str13} via direct representation-theoretic approaches.  

Here we state an effective equidistribution result of the horospherical orbits in a concrete setting as Theorem \ref{equX} below; it is closely related to the main results of the present paper. Let $G=\SL_d(\bR)$, $\Gamma=\SL_d(\bZ)$, and $X=G/\Gamma$ for fixed $d\ge 2$. Let $\{a_t\}< G$ be a 1-parameter diagonal subgroup $\diag{e^{nt}\Id_m,e^{-mt}\Id_n}$, where $\Id_m$ and $\Id_n$ are $m\times m$ and $n\times n$ identity matrices, respectively, and $m+n=d$. Then the horospherical subgroup for $a:=a_1$ in $G$ is indeed expressed as
\eqlabel{horo2}{H=\set{\left(\begin{matrix} \Id_m& A\\
0&  \Id_n
\end{matrix}\right)\in G: A\in \operatorname{Mat}_{m,n}(\bR)}.}
We denote by $m_H$ the Haar measure of $H$ and $m_X$ the $G$-invariant probability measure on $X$. In \cite{KM96}, an effective equidistribution theorem of expanding horospherical translates in $X$ was established as follows.

\begin{thm}\label{equX}\cite[Proposition 2.4.8]{KM96}
Let $V\subset H$ be a fixed neighborhood of the identity in $H$ with smooth boundary and compact closure. 
Then there exists a constant $\del_0>0$ only depending on $m$ and $n$ so that the following holds. For any compact set $K\subset X$, there exists a constant $T(K)\ge 0$ such that
\begin{equation}\label{eeqx}
  \frac{1}{m_H(V)}\int_{V}f(a_tux)dm_H(u)=\int_X fdm_X+O(\cS(f)e^{-\del_0 t})  
\end{equation}
for any $t\ge T(K)$, $f\in C_c^{\infty}(X)$, and $x\in K$. Here, $\cS$ is a suitable Sobolev norm of $C_c^{\infty}(X)$ and the implied constant depends only on $m$,$n$, and $V$.
\end{thm}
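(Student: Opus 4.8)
The plan is to deduce the statement from exponential mixing of the flow $\{a_t\}$ on $X$ by the thickening method of Margulis. Throughout write $I_t:=\frac{1}{m_H(V)}\int_V f(a_tux)\,dm_H(u)$ for the average in question, let $H^-$ be the contracting horospherical subgroup for $a=a_1$, let $Z=S(\GL_m\times\GL_n)$ be the centralizer of $a$ in $G$, and set $Q^-:=Z\ltimes H^-$, the ``neutral times stable'' subgroup; the product maps $H\times Q^-\to G$ and $Q^-\times H\to G$ are then diffeomorphisms onto neighborhoods of $e$. The one analytic input is exponential mixing: there are $\lambda=\lambda(m,n)>0$ and a Sobolev norm $\cS$ on $C_c^\infty(X)$ (of order large enough that $\cS$ also dominates the Lipschitz norm) with
\[
  \Big|\int_X\phi_1(a_ty)\phi_2(y)\,dm_X-\int_X\phi_1\,dm_X\int_X\phi_2\,dm_X\Big|\ll e^{-\lambda t}\cS(\phi_1)\cS(\phi_2),
\]
coming from uniform decay of matrix coefficients on $L^2_0(X)$ (property (T) for $d\ge3$, a spectral-gap estimate for $d=2$). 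We use on $X$ the metric induced by a fixed right-invariant metric on $G$, so that $d_X(gz,z)\le d_G(g,e)$ for all $z\in X$.

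Fix a small $\eta>0$ (eventually $\eta=e^{-ct}$) and a nonnegative $\rho\in C_c^\infty(Q^-)$ supported in the $\eta$-ball about $e$ with $\int_{Q^-}\rho\,dm_{Q^-}=1$, and introduce the thickened average
\[
  \tilde I_t:=\frac{1}{m_H(V)}\int_V\!\int_{Q^-}f(a_tuqx)\,\rho(q)\,dm_{Q^-}(q)\,dm_H(u).
\]
The proof rests on two estimates: (A) $\tilde I_t=I_t+O(\eta\,\cS(f))$, and (B) $\tilde I_t=\int_X f\,dm_X+O(e^{-\lambda t}\eta^{-\kappa}\cS(f))$ for a suitable $\kappa=\kappa(m,n)\ge 1$. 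Granting both and taking $\eta=e^{-\lambda t/(\kappa+1)}$ balances the two errors and gives \eqref{eeqx} with $\del_0$ slightly below $\lambda/(\kappa+1)$, once $T(K)$ is used to absorb the finitely many $K$-dependent geometric constants appearing below (at the cost of shrinking $\del_0$ a little).

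For (A) the naive pointwise comparison is useless: since $a_t$ expands $H$ by $e^{(m+n)t}$, one only gets $|f(a_tuqx)-f(a_tux)|\ll e^{(m+n)t}\eta\,\cS(f)$. Instead one pushes $q$ to the left past $a_tu$. By the Bruhat decomposition write $uq=\hat q(u,q)\,\hat u(u,q)$ with $\hat q\in Q^-$, where $\hat q(u,e)=e$ and $\hat u(u,e)=u$; hence $\hat q$ has size $O(\eta)$ uniformly for $u\in V$, and for fixed $q$ the map $u\mapsto\hat u$ is a diffeomorphism of $V$ that is $O(\eta)$-close to the identity with Jacobian $1+O(\eta)$. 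Then $a_tuqx=(a_t\hat q\,a_{-t})\,(a_t\hat ux)$, and $a_t\hat q\,a_{-t}$ has size $O(\eta)$ since conjugation by $a_t$ fixes the $Z$-component of $\hat q$ and contracts its $H^-$-component; therefore $d_X(a_tuqx,\,a_t\hat ux)=O(\eta)$ and $|f(a_tuqx)-f(a_t\hat ux)|\ll\eta\,\cS(f)$. Changing variables $u\mapsto\hat u$ in the inner integral (the smooth boundary of $V$ controls the $O(\eta)$ discrepancy of the domains) collapses the $q$-integral to $\int_V f(a_tux)\,dm_H(u)$ up to $O(\eta\,\cS(f))$, which is (A).

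For (B), the map $(u,q)\mapsto uqx$ pushes $dm_H\times dm_{Q^-}$ forward to a bounded density times $dm_X$ on its image, so $\tilde I_t=\frac{1}{m_H(V)}\int_X f(a_ty)\Phi(y)\,dm_X(y)$ for an explicit $\Phi\in C_c^\infty(X)$ with $\int_X\Phi\,dm_X=m_H(V)$ and $\cS(\Phi)\ll_{V,K}\eta^{-\kappa}$ — the $K$-dependence entering only through the (finite, for $x\in K$) number of sheets of the map $(VQ^-_\eta)x\to X$. Exponential mixing then gives $\int_X f(a_ty)\Phi(y)\,dm_X=m_H(V)\int_X f\,dm_X+O(e^{-\lambda t}\cS(f)\cS(\Phi))$, which is (B). The one genuinely delicate point is (A): reconciling the expansion of $a_t$ on $H$ with a thickening error that must remain linear in $\eta$ is exactly what the Bruhat reparametrization $uq=\hat q\hat u$ accomplishes, and everything else — the precise $\kappa$, the sheet count, the modular Jacobian of the product map, and the $T(K)$-bookkeeping — is routine.
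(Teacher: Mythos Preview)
The paper does not supply its own proof of this statement: Theorem~\ref{equX} is quoted verbatim from \cite[Proposition~2.4.8]{KM96} and used as an input, with only the remark in \S2.4 that tracking the argument of \cite{KM96} gives the polynomial dependence on $\textrm{ht}(x_0)$ recorded in \eqref{equX'}. Your sketch is precisely the Margulis thickening argument that \cite{KM96} uses (and that the present paper names in the introduction), so your approach coincides with the cited source and is correct as an outline; the Bruhat reparametrization $uq=\hat q\hat u$ in step~(A) and the injectivity/sheet-count bookkeeping in step~(B) are exactly the points where care is needed, and you have identified them.
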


For non-horospherical subgroups, several effective equidistribution results have been proved in the last decade. When the ambient group $G$ is semisimple, an effective equidistribution theorem was established for closed orbits of a semisimple group in general homogeneous spaces \cite{EMV09} (see also \cite{AELM20}), and was recently extended to the adelic setting \cite{EMMV20}. Also, effective results for non-horospherical unipotent subgroups are known in some concrete settings \cite{Mo12,LM14,SU15,Ubi16,CY19}. When $G$ is unipotent, \cite{GT12} settled effective equidistribution of polynomial orbits in nilmanifolds.

In general, let $\hat{G}=G\ltimes W$ be the ambient group where $G$ is a semisimple group and $W$ is the unipotent radical of $\hat{G}$. If $\hat{G}$ is neither semisimple nor unipotent, the methods of the aforementioned results do not seem directly applicable to general homogeneous spaces. However, in special settings effective equidistribution results have been obtained e.g. \cite{Str15,BV16} for $\hat{G}=\SL_2(\bR)\ltimes\bR^2$, \cite{SV20} for $\hat{G}=\SL_2(\bR)\ltimes(\bR^2)^{\oplus k}$, and \cite{Pri18} for $\hat{G}=\SL_3(\bR)\ltimes\bR^3$. The purpose of the present paper is to establish an effective equidistribution result for $\hat{G}=\SL_d(\bR)\ltimes\bR^d$.

\subsection{Main results}
For fixed $d\ge 2$, denote
\begin{align*}
    G&=\SL_d(\bR),\quad \hat{G}=\SL_d(\bR)\ltimes \bR^d,\\
    \Gamma&=\SL_d(\bZ),\quad \hat{\Gamma}=\SL_d(\bZ)\ltimes \bZ^d=Stab_{\hat{G}}(\bZ^d).
\end{align*}
We define $X=G/\Gamma$ and $Y=\hat{G}/\hat{\Gamma}$. It is sometimes convenient to view $\hat{G}$ as a subgroup of $\SL_{d+1}(\bR)$ by $\hat{G}=\set{\left(\begin{matrix} g& b\\
0&  1
\end{matrix}\right): g\in \SL_d(\bR), b\in \bR^d}$. Then there exists a natural projection $\pi: Y\rightarrow X$ sending $\left(\begin{matrix} g& b\\
0&  1
\end{matrix}\right)\hat{\Gamma}\in Y$ to $\left(\begin{matrix} g& 0\\
0&  1
\end{matrix}\right)\Gamma\in X$. We denote by $W\simeq\bR^d$ the unipotent radical of $\hat{G}$, which consists of all translations on $\bR^d$. Since $W=\set{w(b):= \left(\begin{matrix} \Id_d& b\\
0&  1
\end{matrix}\right): b\in\bR^d}$ acts simply transitively on $\bR^d$, we can identify $\hat{G}/W\simeq G$.  We take a lift of the element $g\in G$ to $\hat{G}\subset \SL_{d+1}(\bR)$ given by $\left(\begin{matrix} 
g& 0\\
0& 1
\end{matrix}\right)$ and by abuse of notation we denote it again by $g$. Following this notation, we have a relation $gw(b)=w(gb)g$ for $g\in G$ and $b\in\bR^d$. For $g\Gamma\in X$ and $b\in\bR^d$, $gw(b)\hat{\Gamma}$ is invariant under the integer translations of $b$. Thus, the fiber $\pi^{-1}(g\Gamma)$ can be seen as $\set{gw(b)\hat{\Gamma}\in Y: b\in\bT^d}$.

Let $\{a_t\}$ be a $1$-parameter diagonal subgroup of $G$ defined by $$a_t=\diag{e^{nt}\Id_m,e^{-mt}\Id_n},$$ $H$ be the unstable horospherical subgroup in $G$ for $a=a_1$, and $m_H$ be the Haar measure of $H$ as before. We remark that $H$ is a full horospherical subgroup in $G$, \emph{but not in} $\hat{G}$, which renders the methods in the proof of Theorem \ref{equX} irrelevant. We denote by $m_X$ the $G$-invariant probability measure on $X$ and $m_Y$ the $\hat{G}$-invariant probability measure on $Y$.

Let $V\subset H$ be a fixed neighborhood of the identity in $H$ with smooth boundary and compact closure. For $y\in Y$ and $t\ge 0$, $a_tVy\subset Y$ is a lift of a piece of a full horospherical orbit $a_tVx\subset X$, where $x=\pi(y)\in X$. In this paper, we study the equidistribution of the expanding translates $\set{a_tVy}_{t\ge 0}$ in $Y$ as $t\to\infty$.

For any positive integer $q$, set
\eq{X_q:=\set{gw(b)\hat{\Gamma}\in Y: g\in G, b\in\bQ^d, \bq(b)=q},}
where for any $b\in\bQ^d$, $\bq(b)$ denote its common denominator, i.e. the smallest $q\in\bN$ such that $b\in q^{-1}\bZ^d$. Clearly, $X_q$ is a closed $G$-invariant subset of $Y$ for any $q$. We denote by $m_{X_q}$ the $G$-invariant probability measure on $X_q$.

We start by discussing the \textit{ineffective} equidistribution of $\set{a_tVy}_{t\ge 0}$ in $Y$. For $y\in Y$ and $t\ge 0$, we denote by $\mu_{y,t}\in\crly{P}(Y)$ the normalized probability measure on the orbit $a_tVy$, i.e. $\mu_{y,t}(f):=\frac{1}{m_H(V)}\int_V f(a_tuy)dm_H(u)$ for any $f\in C_b(Y)$. The following equidistribution result is a special case of \cite[Thm. 1.4]{Sha96}:
\eqlabel{irreq}{\mu_{y,t}\wstar m_Y \quad \mbox{for all } y\notin\bigcup_{q\in\bN}X_q,}
\eqlabel{rateq}{\mu_{y,t}\wstar m_{X_q}\quad \mbox{for all } y\in X_q,}
as $t\to\infty$. We refer to \cite[proof of Thm. 5.2]{MS10} and \cite[§1.1]{Str15} for the proof and more detailed discussions. Here we just make a remark that the proof of \cite[Thm. 1.4]{Sha96} crucially relies on Ratner's measure classification theorem. 

The main result of the present paper is Theorem \ref{mainthm} below, which is an \textit{effective} refinement of \eqref{irreq}. Observe that the weak$^*$ limit of $\set{\mu_{y,t}}_{t\ge 0}$ is determined by the Diophantine properties of the initial point $y$. Also, the condition $y\notin\bigcup_{q\in\bN}X_q$ in \eqref{irreq} is equivalent to $b\in\bT^d\setminus\bQ^d$, where $y=gw(b)\hat{\Gamma}$. Based on these observations, we can expect that the rate of convergence in \eqref{irreq} might also depend on the Diophantine properties of $y$, or equivalently that of $b$. 

We say that a vector $b\in\bT^d$ is \textit{of (Diophantine) type} $M\ge 1$ if there exists $c>0$ such that $|b-\frac{\bp}{q}|>cq^{-M}$ for all $q\in\bN$ and $\bp\in\bZ^d$. We first state a simplified version of the main result.

\begin{thm}\label{DioCor}
Let $V\subset H$ be a fixed neighborhood of the identity in $H$ with smooth boundary and compact closure. 
Then there exists a constant $\del>0$ only depending on $d$ such that
\begin{equation}\label{eeqDio}
  \frac{1}{m_H(V)}\int_{V}f(a_tuy)dm_H(u)=\int_Y fdm_Y+O\left(\cS(f)e^{-\frac{\del t}{M+1}}\right)  
\end{equation}
for any $t\ge 0$, $f\in C_c^{\infty}(Y)$, and $y=gw(b)\hat{\Gamma}$, where $b\in\bT^d$ is of Diophantine type $M$. Here, $\cS$ is a suitable Sobolev norm of $C_c^{\infty}(Y)$ and the implied constant depends only on $d$,$V$, and $y$.
\end{thm}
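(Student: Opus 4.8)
The plan is to lift the problem from $Y$ to $X$ and exploit the known effective equidistribution on $X$ (Theorem \ref{equX}), then analyze the fiberwise behaviour of $a_t u y$ along the torus fiber over $a_t u x$. Writing $y = g w(b)\hat\Gamma$ with $x = \pi(y) = g\Gamma$, the relation $g w(b) = w(gb) g$ gives $a_t u y = w\bigl(a_t u b\bigr)\, (a_t u g)\,\hat\Gamma$ up to the identification, so that the orbit piece $\{a_t u y : u \in V\}$ projects to $\{a_t u x : u\in V\}$ and, over each such point, sits at the translation parameter determined by $a_t u b \bmod \bZ^d$. Thus for $f \in C_c^\infty(Y)$, expanding $f$ in the fiber direction (Fourier series on $\bT^d$ in the $W$-variable) reduces the integral $\frac{1}{m_H(V)}\int_V f(a_t u y)\,dm_H(u)$ to a sum over frequency vectors $k \in \bZ^d$ of terms of the shape $\frac{1}{m_H(V)}\int_V \widehat f_k(a_t u x)\, e\bigl(\langle k, a_t u b\rangle\bigr)\, dm_H(u)$, where $\widehat f_k$ is a smooth function on $X$ with Sobolev norm controlled by $\cS(f)$ and decaying rapidly in $\|k\|$.

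The $k = 0$ term is exactly the integral $\frac{1}{m_H(V)}\int_V \widehat f_0(a_t u x)\,dm_H(u)$, which by Theorem \ref{equX} equals $\int_X \widehat f_0\, dm_X + O(\cS(f) e^{-\del_0 t}) = \int_Y f\, dm_Y + O(\cS(f) e^{-\del_0 t})$, giving the main term. For each fixed nonzero $k$, I would treat $\frac{1}{m_H(V)}\int_V \widehat f_k(a_t u x)\, e\bigl(\langle k, a_t u b\rangle\bigr)\, dm_H(u)$ as an oscillatory integral over $V \subset H$. The key point is that the phase $u \mapsto \langle k, a_t u b\rangle$ is (up to the rapidly growing scalar $e^{nt}$ or $e^{-mt}$ entering via $a_t$) a nonconstant affine/polynomial function of the $H$-coordinates of $u$ whenever $b \notin \bQ^d$; more precisely, its gradient has size governed by how well $b$ is approximated by rationals of small denominator. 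Splitting into the regime $\|k\| \le e^{\eta t}$ (small frequencies) and $\|k\| > e^{\eta t}$ (large frequencies) for a suitable small $\eta$: for large frequencies the rapid decay of $\widehat f_k$ in $\|k\|$ makes the whole tail negligible, contributing $O(\cS(f) e^{-t})$ say; for small nonzero frequencies, I would combine the effective equidistribution of $a_t u x$ on $X$ with stationary-phase / non-stationary-phase (integration by parts in $u$) estimates — or, alternatively, run the thickening argument directly on $Y$ with the extra oscillation included — to show each term is $O\bigl(\cS(f)\, \|k\|^{-2d}\, e^{-\del' t / (M+1)}\bigr)$, where the $M+1$ denominator is precisely the cost of converting a Diophantine-type-$M$ lower bound $|b - \bp/q| > c q^{-M}$ into a usable lower bound on the phase gradient over the relevant range of $q \lesssim \|k\|$. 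Summing over $0 < \|k\| \le e^{\eta t}$ the $\|k\|^{-2d}$ factors converge, and choosing $\eta$ small relative to $\del'/(M+1)$ keeps the exponent intact.

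The main obstacle I anticipate is the small-frequency oscillatory estimate: one must show uniformly (over $t$, over $0 < \|k\| \le e^{\eta t}$, and over $u \in V$) that the phase $\langle k, a_t u b\rangle$ genuinely oscillates at a rate at least $e^{\del' t/(M+1)}$, and that this oscillation is not destroyed by resonances between the $a_t$-expansion and the direction of $k$. Concretely, $a_t$ expands the first $m$ coordinates and contracts the last $n$, so if $k$ is supported on the contracting coordinates the naive bound $e^{-mt}$ goes the wrong way; one then has to move the oscillation into the $H$-variable (using that $u = \smallmat{\Id_m & A \\ 0 & \Id_n}$ mixes the two blocks, so $a_t u b$ has a component of size $e^{nt}(A b'')$ with $b''$ the contracting-block part of $b$) and invoke the Diophantine hypothesis on the relevant subvector of $b$ — which requires checking that type $M$ for $b \in \bT^d$ forces a comparable Diophantine condition in every coordinate block, or else running a more careful case analysis. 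Handling this uniformly in $k$ while keeping the Sobolev norm bookkeeping clean, and making sure the threshold $T(K)$ from Theorem \ref{equX} does not reintroduce dependence that breaks the "for any $t \ge 0$" claim (which forces absorbing small $t$ into the implied constant depending on $y$), are the points where care is needed.
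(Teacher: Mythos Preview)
Your approach is different from the paper's and, as written, has a concrete gap in the fiber analysis. The paper deduces Theorem \ref{DioCor} from Theorem \ref{mainthm} via the elementary bound $\zeta(b,T)\gg T^{1/(M+1)}$ (equation \eqref{MDio}); the substance is in Theorem \ref{mainthm}, which is proved not by oscillatory-integral estimates but by a Bourgain--Furman--Lindenstrauss--Mozes style argument. After localizing $\mu_t$ with a partition of unity on $X$ and pushing each piece forward to a measure $\nu_{t,\om}$ on $\bT^d$ via the fundamental-domain map $\sigma$, the paper shows that a large Fourier coefficient $\widehat{\nu_{t,\om}}(\bm_0)$ would force an earlier-time measure $\nu_{t-s}$ to be highly concentrated on a tiny ball in $\bT^d$ (Propositions \ref{mainprop} and \ref{highcon}, via the quantitative Wiener lemma of \cite{BFLM11}), while the Diophantine hypothesis on $b$ rules out such concentration (Proposition \ref{lowcon}). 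The driving mechanism is the well-separateness of the discrete orbit $\{\gamma(x,u)^{tr}\bm_0:u\in V\}\subset\bZ^d$ (Proposition \ref{key}), not smoothness of a phase in $u$.

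The gap in your plan is the phase. You assert that the fiber coordinate of $a_t u y$ is $a_t u b\bmod\bZ^d$, making $u\mapsto\langle k,a_t u b\rangle$ a smooth oscillatory phase. But identifying $\pi^{-1}(a_t u x)$ with $\bT^d$ requires a representative in a fundamental domain: if $a_t u g=\xi\gamma$ with $\xi\in\cF$, $\gamma\in\Gamma$, then $a_t u y=\xi w(\gamma b)\hat\Gamma$ and the torus coordinate is $\gamma(x,u)b$, not $a_t u b$ (this is Lemma \ref{siggam}). The map $u\mapsto\gamma(x,u)\in\SL_d(\bZ)$ is piecewise constant in $u$, jumping whenever $a_t u g$ crosses $\partial\cF$, so the true ``phase'' $\langle k,\gamma(x,u)b\rangle$ is not smooth and integration by parts does not apply. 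For the same reason your $\widehat f_k$ are not well-defined functions on $X$: they satisfy $\widehat f_k(g\gamma)=\widehat f_{(\gamma^{tr})^{-1}k}(g)$, so a single frequency $k$ on one chart of the torus bundle scrambles into all of $\Gamma^{tr}k$ on another. A Fourier/representation-theoretic attack in the spirit you sketch is what \cite{Str15,Pri18} carry out for $d=2,3$, but it requires explicit unitary representations and Kloosterman-sum cancellation, not naive stationary phase; the paper's contribution is precisely to bypass that with the geometric BFLM argument valid for all $d$.
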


More generally, in order to express the Diophantine properties of $b$ in a quantitative sense, let us define a function $\zeta:(\bT^d\setminus\bQ^d)\times\bR^+\to\bN$ by
$$\zeta(b,T):=\min\set{N\in\bN: \displaystyle\min_{1\leq q\leq N}\|qb\|_\bZ\leq \frac{N^2}{T}},$$
where $\|\cdot\|_{\bZ}$ denotes the supremum distance from $0\in\bT^d$.
We note that $\zeta(b,\cdot)$ is non-decreasing, unbounded, and has the following properties:
\eqlabel{rescaling}{\zeta(b,cT)\leq \lceil c^{\frac{1}{2}}\zeta(b,T)\rceil,}
\eqlabel{gammab}{\zeta(b,\|\gamma^{-1}\|_{\textrm{op}}^{-1}T)\leq\zeta(\gamma b,T)\leq\zeta(b,\|\gamma\|_{\textrm{op}}T),}
\eqlabel{Dirichlet}{\zeta(b,T)\leq \lceil T^{\frac{d}{2d+1}}\rceil.}
for any $\gamma\in\Gamma$ and $c>1$. Here $\|\cdot\|_{op}$ denotes the operator norm of $G$ w.r.t. the supremum norm on $\bR^d$.
The inequality \eqref{Dirichlet} is equivalent to Dirichlet's approximation theorem.

\begin{thm}\label{mainthm}
Let $V$ and $\cS$ be as in Theorem \ref{DioCor}. Then there exists a constant $\del'>0$ only depending on $d$ such that
\begin{equation}\label{eeqy}
  \frac{1}{m_H(V)}\int_{V}f(a_tuy)dm_H(u)=\int_Y fdm_Y+O\left(\cS(f)\zeta(b,e^{\frac{nt}{2}})^{-\del'}\right)  
\end{equation}
for any $t\ge 0$, $f\in C_c^{\infty}(Y)$, and $y=gw(b)\hat{\Gamma}$ with $\|g\|\leq \zeta(b,e^{\frac{nt}{2}})^{\del'}$ and $b\in\bT^d$. Here, the implied constant depends only on $d$ and $V$.
\end{thm}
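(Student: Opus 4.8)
The plan is to deduce Theorem~\ref{mainthm} from effective equidistribution of expanding horospherical translates in $X$ and in its congruence covers $X_q$, via a Fourier decomposition along the torus fibres of $\pi\colon Y\to X$, after replacing $b$ by a rational approximant whose denominator is controlled by $\zeta$. For $h\in G$ and $k\in\bZ^d$ I set
\[
  C_k(h):=\int_{\bT^d}f\bigl(hw(s)\hat\Gamma\bigr)\,e^{-2\pi i\langle k,s\rangle}\,ds .
\]
Since $s\mapsto f(hw(s)\hat\Gamma)$ is smooth and $\bZ^d$-periodic, $f(hw(b)\hat\Gamma)=\sum_{k}C_k(h)e^{2\pi i\langle k,b\rangle}$, and integrating by parts in $s$ gives $|C_k(h)|\le\cS_{N_0}(f)(1+\|k\|)^{-N_0}$ for every $N_0$, uniformly in $h$. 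Taking $h=a_tug$ and integrating over $V$,
\[
  \mu_{y,t}(f)=\sum_{k\in\bZ^d}I_k(t)\,e^{2\pi i\langle k,b\rangle},\qquad
  I_k(t):=\frac1{m_H(V)}\int_V C_k(a_tug)\,dm_H(u).
\]
The key observation is that $I_k(t)$ depends on $k,t,g,f$ but \emph{not} on $b$; hence replacing $b$ by another vector $b'$ turns $\mu_{y,t}(f)$ into $\mu_{y',t}(f)$ with $y'=gw(b')\hat\Gamma$ and the \emph{same} coefficients $I_k(t)$.

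Next I would invoke the definition of $\zeta$: with $N:=\zeta(b,e^{nt/2})$ there is a reduced fraction $\mathbf p/q$, $q\le N$, with $\|b-\mathbf p/q\|_\infty\le N^2e^{-nt/2}$; put $y':=gw(\mathbf p/q)\hat\Gamma\in X_q$ and split
\[
  \mu_{y,t}(f)-\!\int_Y\! f\,dm_Y
  =\bigl(\mu_{y,t}(f)-\mu_{y',t}(f)\bigr)
   +\Bigl(\mu_{y',t}(f)-\!\int_{X_q}\! f\,dm_{X_q}\Bigr)
   +\Bigl(\int_{X_q}\! f\,dm_{X_q}-\!\int_Y\! f\,dm_Y\Bigr).
\]
The first bracket equals $\sum_k I_k(t)\bigl(e^{2\pi i\langle k,b\rangle}-e^{2\pi i\langle k,\mathbf p/q\rangle}\bigr)$; bounding $|I_k(t)|\le\sup_h|C_k(h)|$ and using $|e^{2\pi ix}-e^{2\pi iy}|\le2\pi|x-y|$ together with the rapid decay of $C_k$ (take $N_0>d+1$) makes it $\ll\cS_{N_0}(f)\|b-\mathbf p/q\|_\infty\ll\cS_{N_0}(f)N^2e^{-nt/2}$. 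For the third bracket, $X_q$ is the single $G$-orbit $G/\Gamma_q$ with $\Gamma_q\le\Gamma$ the congruence subgroup fixing $\mathbf p/q$ in $(\tfrac1q\bZ/\bZ)^d$ (so $[\Gamma:\Gamma_q]\asymp q^d$), and $m_{X_q}$ disintegrates over $m_X$ with conditional measure the uniform measure on the exact-order-$q$ torsion points of $\bR^d/g\bZ^d$; since $\int_Y f\,dm_Y=\int_X\bigl(\int_{\bT^d}f(gw(s)\hat\Gamma)\,ds\bigr)dm_X$, an elementary exponential-sum (Ramanujan-sum) estimate combined with the decay of $C_k$ gives $\bigl|\int_{X_q}f\,dm_{X_q}-\int_Y f\,dm_Y\bigr|\ll\cS_{N_0}(f)q^{-c}$ for some $c=c(d)>0$.

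The heart of the matter — and the step I expect to be the main obstacle — is the middle bracket: effective equidistribution of the expanding horospherical translate $a_tVy'$ inside $X_q\cong G/\Gamma_q$. Since $H$ is a full horospherical subgroup of $G$, this is the setting of Theorem~\ref{equX}; what is needed is the quantitative form valid for all $t\ge0$, with error $\ll\cS(f)\,\|g\|^{A_0}e^{-\delta_*t}$ in which $\delta_*>0$, $A_0$ and the implied constant are \emph{uniform over the family $\{\Gamma_q\}_q$} (the base-point dependence being absorbed into $\|g\|^{A_0}$, since the height of $y'$ in $X_q$ is controlled by a power of $\|g\|$). This uniformity is precisely what property~$\tau$ for $\SL_d(\bZ)$ provides (Selberg's $3/16$ bound for $d=2$, Kazhdan's property~(T) for $d\ge3$): a spectral gap independent of $q$, together with the fact that injectivity radii and the relevant Sobolev constants of the covers $G/\Gamma_q\to X$ are controlled uniformly; one also checks $\cS_{X_q}(f|_{X_q})\ll\cS_Y(f)$. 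Granting this, the middle bracket is $\ll\cS(f)\|g\|^{A_0}e^{-\delta_*t}$.

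Finally I would calibrate the exponents: under $q\le N$, $\|g\|\le N^{\delta'}$ and $N\le\lceil e^{ntd/(2(3d+1))}\rceil$ (from \eqref{Dirichlet}), the first bracket is $\ll\cS(f)N^2e^{-nt/2}\ll\cS(f)N^{-\delta'}$ once $\delta'\le1$, the third is $\ll\cS(f)N^{-c}\le\cS(f)N^{-\delta'}$ once $\delta'\le c$, and the middle one is $\ll\cS(f)N^{(A_0+1)\delta'}e^{-\delta_*t}\ll\cS(f)N^{-\delta'}$ once $(A_0+1)\delta'\cdot\tfrac{nd}{2(3d+1)}\le\delta_*$. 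Choosing $\delta'=\delta'(m,n)>0$ small enough to meet all three conditions, and disposing of a bounded range of $t$ directly (there $\zeta(b,e^{nt/2})$ and $\|g\|$ are bounded, so the estimate is automatic), yields \eqref{eeqy}; properties \eqref{rescaling}--\eqref{gammab} are used only to absorb the fixed element $u\in V$ and to pass between normalisations of $\zeta$. Theorem~\ref{DioCor} then follows, since $b$ of Diophantine type $M$ satisfies $\zeta(b,e^{nt/2})\gg e^{nt/(2(M+1))}$, so that for $t$ large in terms of $y$ the hypothesis $\|g\|\le\zeta(b,e^{nt/2})^{\delta'}$ holds automatically and $\zeta(b,e^{nt/2})^{-\delta'}\ll e^{-n\delta' t/(2(M+1))}$.
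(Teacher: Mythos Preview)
Your approach has a genuine gap in the very first estimate, and it is not repairable within this strategy. The claim that $|C_k(h)|\le\cS_{N_0}(f)(1+\|k\|)^{-N_0}$ \emph{uniformly in $h$} is false. Differentiating $s\mapsto f(hw(s)\hat\Gamma)$ once in $s_j$ produces the left-invariant vector field $Z_{he_j}=\sum_l h_{lj}Z_{e_l}$ acting on $f$, so each integration by parts costs a factor $\|h\|_{\mathrm{op}}$; the correct bound is $|C_k(h)|\ll(\|h\|_{\mathrm{op}})^{N_0}\cS_{N_0}(f)\|k\|^{-N_0}$. With $h=a_tug$ one has $\|h\|_{\mathrm{op}}\asymp e^{nt}\|g\|$, so your first bracket is only bounded by $\cS(f)\,\|g\|\,N^2e^{nt/2}$, which blows up. The same thing is visible without Fourier: $a_tugw(b)=w\bigl(a_tug(b-\mathbf p/q)\bigr)a_tugw(\mathbf p/q)$, so the two orbit points differ by a translation of size $\|a_tug(b-\mathbf p/q)\|\asymp e^{nt}\|g\|\,N^2e^{-nt/2}$. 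In other words, the rational approximant furnished by $\zeta(b,e^{nt/2})$ is far too crude to survive the expansion by $a_t$; one would need accuracy $\ll e^{-nt}$, which the Diophantine input simply does not give. A secondary problem: your third bracket is $\ll\cS(f)q^{-c}$, but the definition of $\zeta$ only forces $q\le N$, not $q\asymp N$ (indeed $q$ may equal $1$, whence $m_{X_1}\neq m_Y$ and the bracket is of size $1$), so the final calibration ``$q\le N$'' does not yield $N^{-\delta'}$.

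The paper does \emph{not} approximate $b$ by a rational at all. Instead it runs a BFLM-type bootstrapping on the Fourier coefficients of the fibrewise projected measures $\nu_{t,\om}=\sigma_*\mu_{t,\om}$: a large nontrivial coefficient $\widehat{\nu_{t,\om}}(\bm_0)$ at time $t$ forces, via the $\SL_d(\bZ)$-action on the fibres encoded by the map $\gamma_s(x,u)$, many large coefficients at an earlier time $t-s$ (Proposition~\ref{mainprop}); by a quantitative Wiener lemma this in turn forces $\nu_{t-s}$ to be highly concentrated on a tiny ball (Proposition~\ref{highcon}). The Diophantine condition on $b$ enters only to \emph{rule out} such concentration (Proposition~\ref{lowcon}), again using the well-separation of $\gamma_{t-s}(x,u)^{tr}e_1$ (Proposition~\ref{key}) together with an effective Weyl criterion. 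This avoids both obstructions above: no comparison of $y$ with a rational $y'$ is ever made, and no uniformity over congruence covers is needed.
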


By definition of $\zeta$, we have 
\eqlabel{MDio}{\zeta(b,T)\gg T^{\frac{1}{M+1}}}
for any $T>0$ and $b$ of Diophantine type $M$. Hence, Theorem \ref{mainthm} directly implies Theorem \ref{DioCor} with $\del=\frac{n\del'}{2}$. Moreover, Theorem \ref{mainthm} implies \eqref{irreq} directly as $\zeta(b,T)\to\infty$ as $T\to\infty$ for any irrational $b\in\bT^d$.

We give more general versions of Theorem \ref{DioCor} and Theorem \ref{mainthm} for other diagonal subgroups in Section \ref{gendiag}.

Str\"ombergsson \cite{Str15} proved Theorem \ref{mainthm} for $\SL_2(\bR)\ltimes\bR^2/\SL_2(\bZ)\ltimes\bZ^2$ using Fourier analysis and methods of from analytic number theory, and later Prinyasart \cite{Pri18} obtained an effective equidistribution result in $\SL_3(\bR)\ltimes\bR^3/\SL_3(\bZ)\ltimes\bZ^3$ building on similar methods. In particular, their proofs rely on delicate analysis on the explicit expressions of unitary representations of $\SL_2(\bR)$ or $\SL_3(\bR)$ and cancellations from Kloosterman sums. In order to prove Theorem \ref{mainthm} for arbitrary dimension $d\ge 2$, we will use a different approach from the previous works. The method in the present paper is \textit{geometric} in the sense that we do not rely on representation-theoretic properties of the semisimple component $G=\SL_d(\bR)$. It proceeds via a geometric decomposition of $X=G/\Gamma$ and analysis of the Fourier coefficients of probability measures on $\bT^d$. We will use the effective equidistribution result in $X$ (Theorem \ref{equX} above), and a strategy that is similar in spirit to the idea given in \cite{BFLM11}.

\subsection{Sketch of the proof of Theorem \ref{mainthm}}
We fix an initial point $y_0=g_0w(b_0)\hat{\Gamma}$ and let $\mu_t\in\crly{P}(Y)$ be the normalized probability measure on the orbit $a_tVy_0$, for any $t\ge 0$. Then Theorem \ref{mainthm} is equivalent to obtaining an upper bound of $|\mu_t(f)-m_Y(f)|$ for $f\in C_c^\infty(Y)$. 

For any $x\in X$, $\pi^{-1}(x)\subset Y$ can be identified with $\bT^d$, so $Y$ is a torus bundle of $X$. Moreover, if we fix a fundamental domain $\cF\subset G$ of $X$, any $y\in Y$ can be uniquely parametrized as $y=gw(b)\widehat{\Gamma}$, where $g\in\cF$ and $b\in\bT^d$. Hence, we have a bijective measurable map $y\mapsto(g\Gamma,b)$ between $Y$ and $X\times \bT^d$. The projection to $X$ of this map is the canonical projection $\pi: Y\to X$. We denote by $\sigma:Y\to\bT^d$ the projection to $\bT^d$ of this map, which depends on the choice of $\cF$.

We first decompose the orbit $a_tVy_0\subset Y$ to small pieces so that each piece of orbit is approximately on the same fiber torus. To simplify notations assume that $b_0\in\bT^d$ is of Diophantine type $M$. Let $\set{\om_i}_{i\in I}$ be a partition of unity of $X$ such that each $\om_i$ is supported on a ball $B^X(z_i,r)$, where $z_i\in X$, $r\asymp e^{-\frac{\kappa t}{M+1}}$, and $\kappa>0$ will be an appropriately chosen small constant. Then the measure $\mu_t$ is decomposed by $\sum_i \pi_*\mu_t(\om_i)\mu_{t,\om_i}$, where $\mu_{t,\om_i}(f)=\pi_*\mu_t(\om_i)^{-1}\int f\om_i\circ\pi d\mu_t$. Considering $\mu_{t,\om_i}$ as a measure on $B^X(z_i,r)\times \bT^d$, we may also look at $\nu_{t,\om_i}:=\sigma_*\mu_{t,\om_i}$ which is a probability measure on $\bT^d$ (approximating $\mu_{t,\om_i}$ up to error $\ll r$).

Hence to prove the effective equidistribution in $Y$, it is enough to show that for each $\om$, the measure $\nu_{t,\om}$ effectively equidistributes toward the Lebesgue measure on the torus. In other words, it suffices to prove that any nontrivial Fourier coefficient $\widehat{\nu_{t,\om}}(\bm_0)$ is small for $\bm_0\in\bZ^d\setminus\set{0}$. Most of this paper will be devoted to proving the decay of nontrivial Fourier coefficients, Proposition \ref{Fourdecay}.

Our basic approach to prove Proposition \ref{Fourdecay} is viewing the horospherical action in $Y$ as the combination of the horospherical action in $X$ and a discrete $\Gamma=\SL_d(\bZ)$-action on $\bT^d$. For example, for $s\ge 0$, $u\in V$, and $x=g\Gamma\in X$ such that $g\in\cF$, there uniquely exist $\xi_s(x,u)\in\cF$ and $\gamma_s(x,u)\in\Gamma$ such that $a_sug=\xi_s(x,u)\gamma_s(x,u)$. Then for $y=gw(b)\hat{\Gamma}$, a point on the horospherical orbit $a_sVy$ is described by $a_suy=\xi_s(x,u)w(\gamma_s(x,u)b)\hat{\Gamma}$. Observe that the dynamics on the torus part is given by $\SL_d(\bZ)$-action.

We already have an effective equidistribution result in $X$ thanks to \cite{KM96}: expanding horospherical translates in $X$ are polynomially effectively equidistributed by Theorem \ref{equX}. Our main argument to deal with the dynamics on the torus is inspired by \cite{BFLM11}, which proved a quantitative equidistribution result for $\SL_d(\bZ)$-random walk on the torus. Following the strategy of \cite{BFLM11} (see also \cite{HdS19,HLL20}), we will prove:
\begin{enumerate}
    \item (Proposition \ref{mainprop}) If $|\widehat{\nu_{t,\om}}(\bm_0)|$ is large for $\bm_0\in\bZ^d\setminus\set{0}$, then for appropriately chosen $s<t$, we can find a rich family of probability measures on the torus $\set{\nu_{t-s,j}}_{j\in\cJ'}$ such that $\supp\nu_{t-s,j}\subseteq\supp\nu_{t-s}$ for all $j\in\cJ'$, $\supp\nu_{t-s,j}$'s are disjoint, and each $\nu_{t-s,j}$ has a rich set of Fourier coefficients which are larger than a polynomial in $|\widehat{\nu_{t,\om}}(\bm_0)|$. 
    \item (Proposition \ref{lowcon}) If the torus-coordinate $b_0\in\bT^d$ of the initial point $y_0\in Y$ is not well-approximated by rationals of low denominator, then $\nu_{t-2s}$ cannot be highly concentrated on a $\rho$-radius ball, i.e. $\nu_{t-2s}(B^{\bT^d}(p,\rho))\ll \rho^{2\kappa'}$ for any $p\in\bT^d$, where $\rho\asymp e^{-\frac{\kappa''(t-2s)}{M+1}}$ for an appropriately chosen constant $\kappa', \kappa''>0$.
\end{enumerate}
Then Proposition \ref{Fourdecay} is obtained from (1) and (2) as follows. Making use of \cite[Proposition 7.5.]{BFLM11} which is a quantitative analog of Wiener's lemma, we deduce from Proposition \ref{mainprop} that if $|\widehat{\nu_{t,\om}}(\bm_0)|$ is large, then $\nu_{t-s,j}$'s are highly concentrated, namely for each $j\in\cJ'$ there exists $p_j\in\bT^d$ such that $\nu_{t-s,j}(B^{\bT^d}(p_j,\rho_0))\gg \rho_0^{0.1\kappa'}$ (Proposition \ref{highconcentrationj}) for $\rho_0\asymp \rho^n$. This information is not yet sufficient to derive a contradiction. Indeed, $p_j$'s might vary over $j\in\cJ'$ so it does not give high-concentration of $\nu_{t-s}$ directly. Thus we shall consider $\nu_{t-2s}=\sig_*\mu_{t-2s}=\sig_*\big((a_{-s})_*\mu_{t-s}\big)$, and deduce from Proposition \ref{highconcentrationj} that $\nu_{t-2s}(B^{\bT^d}(p,\rho))\gg \rho^{\kappa'}$ for some $p\in\bT^d$ (Proposition \ref{highcon}). Combining this with Proposition \ref{lowcon}, we conclude that $|\widehat{\nu_{t,\om}}(\bm_0)|$ must be small if $b_0$ is not well-approximated by rationals of low denominator, i.e. Proposition \ref{Fourdecay}.

We now briefly explain how to prove (1) and (2) above. In the setting of random walks as \cite{BFLM11}, the dynamics on $\bT^d$ is determined by the law of the random walk. In the case of horospherical translates, the function $\gamma_s(x,u)$ defined above plays a similar role as the law of the random walk. Proposition \ref{key} describes the dynamics on $\bZ^d$ induced by $\gamma_s(x,u)$, and is a crucial ingredient to prove both (1) and (2). It asserts that for any $\bm\in\bZ^d\setminus\set{0}$ and $x\in X$, the orbit $\gamma_s(x,u)^{tr}\bm$ with respect to $u\in V$ is well-separated on $\bZ^d$. 

For (1), the Fourier coefficient $\widehat{\nu_{t,\om}}(\bm_0)$ can be expressed as the average of Fourier coefficients at this $\Gamma^{tr}$-orbit, so well-separateness of such $\Gamma^{tr}$-orbit implies a rich set of large Fourier coefficients. Unlike \cite{BFLM11} which studied the random walk on a fixed torus, a technical difficulty in our setting is that the function $\gamma_s(x,u)$ and the fiber tori $\pi^{-1}(x)$ vary with the base point $x\in X$. For this reason, we decompose the measure $\mu_{t-s}$ again with finer boxes in $X$ so that $\gamma_s(x,u)$ is stable with respect to $x$ in each box.

For (2), we have the well-separateness of the orbit $\gamma_{t-s}(x,u)^{tr}e_1$ from Proposition \ref{key}, and it implies the well-separateness of the first coordinate of the orbit $\gamma_{t-s}(x,u)b_0$ under some Diophantine condition of $b_0$. It implies that the measure $\nu_{t-s}$ cannot be highly concentrated, so we obtain (2). An effective version of Weyl's well-known equidistribution criterion for an irrational rotation on the one-dimensional torus will be needed, and here is the place where we use the Diophantine properties of $b_0$.

\vspace{5mm}
\tb{Acknowledgments}. 
I would like to thank Manfred Einsiedler for numerous fruitful suggestions on earlier drafts of this paper. I am grateful to Taehyeong Kim and Seonhee Lim for many valuable conversations and their illuminative comments. I am also deeply indebted to Andreas Str\"ombergsson and the referees for pointing out that Lemma 3.7 in a previous version of this manuscript was wrong and for a lot of helpful comments on the revised version.

\section{Preliminaries}

Throughout this paper, we fix the dimension $d=m+n$, the diagonal group $a_t=\diag{e^{nt}\operatorname{Id}_m,e^{-mt}\operatorname{Id}_n}$, and the neighborhood $V\subset H$. We use the notation $\kappa_1,\kappa_2, ...$ to denote positive small exponents that depend only on the dimensions $m,n$. We shall use the standard notation $A\ll B$ or $A=O(B)$ to mean that $A\leq CB$ for some constant $C>0$ that depends only on $m,n$ and $V\subset H$. We shall also write $A\asymp B$ to mean both $A\ll B$ and $B\ll A$. The notation $C_1,C_2, ...$ ($c_1,c_2, ...$) will denote large (small) positive constants that depend only on $m,n$, and $V$.

\subsection{Metrics and norms}\label{subsec:Metricnorm}
Let $\bd^{\hat{G}}(\cdot,\cdot)$ be a right invariant Riemannian metric on $\hat{G}$. Then this metric induces metrics on $G$, $Y$, and $X$. We denote by $B^{\hat{G}}(\hat{g},r)$, $B^{G}(g,r)$, $B^{Y}(y,r)$, and $B^{X}(x,r)$ the balls with radius $r$ and centers $\hat{g}\in \hat{G}$, $g\in G$, $y\in Y$, and $x\in X$ with respect to the metric on $\hat{G}$, $G$, $Y$, $X$, respectively. Also, we set $\|g\|:=\displaystyle\max_{1\leq i,j\leq d}(|g_{ij}|,|(g^{-1})_{ij}|)$ for $g\in G$. Then for some constants $C_1,C_2>1$, we have (See \cite[§3.3]{EMV09}):
\eqlabel{norm}{\|g^{-1}\|=\|g\|=\|g^{tr}\|,\quad\|g_1g_2\|\leq C_1\|g_1\|\|g_2\|,}
\eqlabel{leftmetric}{\begin{aligned}
    &\bd^G(gg_1,gg_2)\leq C_2\|g\|^2\bd^G(g_1,g_2),\\ \bd^X(gx_1,gx_2)\leq C_2&\|g\|^2\bd^X(x_1,x_2),\quad \bd^Y(gy_1,gy_2)\leq C_2\|g\|^2\bd^Y(y_1,y_2)
\end{aligned}}
for any $g,g_1,g_2\in G$, $x_1,x_2\in X$, $y_1,y_2\in Y$.
For $v=(v_1,\cdots,v_d)\in\bR^d$, we use the supremum norm $\|v\|=\displaystyle\max_{1\leq i\leq d}|v_i|$. We denote by $\|g\|_{\textrm{op}}$ the operator norm of $g\in G$ with respect to the supremum norm of $\bR^d$. Then for some $C_3>1$,
\eqlabel{opnorm}{\|gv\|\leq \|g\|_{\textrm{op}}\|v\|, \quad \|g\|_{\textrm{op}}\leq C_3\displaystyle\max_{1\leq i,j\leq d}(|g_{ij}|)\leq C_3\|g\|}
for any $g\in G$ and $v\in\bR^d$.

\subsection{Compactness, height, and injectivity}
 For $x\in X$ we set:
\begin{align*}
    \textrm{ht}(x)&\defn\sup\set{\|gv\|^{-1}: x=g\Gamma, v\in\bZ^d\setminus\set{0}}\\
    K(\eps)&\defn\set{x\in X: \textrm{ht}(x)\leq\eps^{-1}}.
\end{align*}
Note that $\textrm{ht}(x)\ge 1$ for all $x\in X$. By Mahler's compact criterion, $K(\eps)$ is a compact set of $X$ for all $\eps>0$. Moreover, one can show that 
\eqlabel{cspmsr}{m_X(X\setminus K(\eps))\asymp \eps^d} for $\eps>0$
by using the following Siegel's integral formula \cite{Sie45}:
$$\int_X \tilde{f}dm_X=\int_{\bR^d}fdm_{\bR^d}$$
for any bounded any compactly supported function $f:\bR^d\to\bR_{\ge 0}$, where $\tilde{f}: X\to\bR_{\ge 0}$ is the $\textit{Siegel transform}$ of $f$ defined by $\tilde{f}(g\Gamma)=\displaystyle\sum_{v\in g\bZ^d\setminus\set{0}}f(gv)$. Applying the Siegel's integral formula with the characteristic function of $\eps$-radius ball centered at $0\in\bR^d$, \eqref{cspmsr} follows.

We also have the following estimate of the injectivity radius of $K(\eps)$ from \cite[Proposition 3.5]{KM12}. There exists a constant $C_4>1$ such that for any $0<r<\frac{1}{2}$ and $x\in K(C_4r^{\frac{1}{d}})$, the map $g\mapsto gx$ is injective on the ball $B^G(\operatorname{id},r)$ . In other words, there exists a bijective isometry between $B^G(\operatorname{id},r)x$ and $B^X(x,r)$.

\subsection{Sobolev norms}
Fix a basis $\cB$ for the Lie algebra $\mathfrak{g}$ of $G$, and a basis $\hat{\cB}$ for the Lie algebra $\hat{\mathfrak{g}}$ of $\hat{G}$ extended from $\cB$. The basis $\cB$ defines differentiation action of $\mathfrak{g}$ on $C_c^\infty(X)$ by $Zf(x)=\frac{d}{dt}f(\textrm{exp}(tZ)x)|_{t=0}$ for $f\in C_c^\infty(X)$ and $Z\in \cB$. The differentiation action of $\hat{\mathfrak{g}}$ on $C_c^\infty(Y)$ is defined similarly. We denote by $\triangledown f$ the gradient vector field on $G$ with respect to the basis $\cB$. Following \cite[§3.7]{EMV09}, we define $L^2$-Sobolev norms on $C_c^\infty(X)$ and $C_c^\infty(Y)$ for $k\in\bN$ as
$$\cS^X_k(f)^2:=\displaystyle\sum_{\cD}\|\textrm{ht}(x)^k\cD f\|^2_{L^2},$$
$$\cS^Y_k(f)^2:=\displaystyle\sum_{\cD}\|\textrm{ht}(\pi(y))^k\cD f\|^2_{L^2},$$
where $\cD$ ranges over all monomials in $\cB$, $\hat{\cB}$ of degree $\leq k$, respectively.
Let $l_0$ be an integer such that Theorem $\ref{equX}$ holds with the Sobolev norm $\cS^X=\cS^X_{l_0}$. Throughout this paper, we fix a sufficiently large integer $l\in\bN$ and the corresponding $l$-th degree Sobolev norm $\cS=\cS^Y_l$ on $C^{\infty}_c(Y)$ with the following properties: for $f\in C^{\infty}_c(Y)$,
\eqlabel{Sobol}{\cS^X(\overline{f})\leq\cS(f), \quad\|\cD f\|_{L^{\infty}(Y)}\leq\cS(f)}
for $\cD$ of degree $\leq d+2$. The function $\overline{f}\in C_c^{\infty}(X)$ is the average function over a fiber defined by $\overline{f}(x)=\int_{\pi^{-1}(x)}f(y)dm_{\pi^{-1}(x)}(y)$ for $x\in X$, where $m_{\pi^{-1}(x)}$ is the normalized Haar measure of $\pi^{-1}(x)$. Note that one can view $\pi^{-1}(x)$ and $m_{\pi^{-1}(x)}$ as the torus $\bT^d$ and the Lebesgue measure on the torus.

\subsection{Effective equidistribution in $X$ and measure estimates}
Considering the dependence on the initial point $x_0\in X$, Theorem \ref{equX} indeed can be formulated as follows: there exists a constant $\kappa_1>0$ such that
\eqlabel{equX'}{|\frac{1}{m_H(V)}\int_V f(a_tux_0)dm_H(u)-\int_X fdm_X|\ll \textrm{ht}(x_0)^{\kappa_1}\cS^X(f)e^{-\del_0 t}}
for any $t>0$, $x_0\in X$ and $f\in C_c^{\infty}(X)$. Keeping track of the dependence on $x_0\in X$ in the proof of \cite{KM96}, one can check that the error bound polynomially depends on the injectivity radius of $x_0$, which also polynomially depends on $\textrm{ht}(x_0)$.

Recall that $\mu_{y,t}$ denotes the normalized orbit measure on $a_tVy\subset Y$, and $\pi_*\mu_{y,t}$ is the normalized horospherical orbit measure on $a_tVx\subset X$, where $x=\pi(y)$. Thus, \eqref{equX'} enables us to estimate the measure of a small ball in $X$ with respect to $\pi_*\mu_{y,t}$: there exist $0<\kappa_2\leq\frac{1}{2}$ such that for any $e^{-\kappa_2 t}<r<\frac{1}{2}$ and $x\in X(C_4r^{\frac{1}{d}})$,
\eqlabel{ballest}{\pi_*\mu_{y,t}(B^X(x,r))\asymp r^{d^2-1}}
if $\textrm{ht}(\pi(y))<e^{\kappa_2 t}$. This estimate is from the fact $m_X(B^X(x,r))\asymp r^{d^2-1}$ for $x\in X(C_4r^{\frac{1}{d}})$ and the use of \eqref{equX'} with an approximating function of $B^X(x,r)$. 

Also, we can estimate how much measure appears near the cusp with respect to $\pi_*\mu_{y,t}$. We may assume that $\kappa_2>0$ was taken sufficiently small so that for any $e^{-\kappa_2 t}<\eps<\frac{1}{2}$,
\eqlabel{cuspest}{\pi_*\mu_{y,t}(X\setminus K(\eps))\asymp m_X(X\setminus K(\eps))\asymp \eps^d}
if $\textrm{ht}(\pi(y))<e^{\kappa_2 t}$. For this estimate, we used \eqref{cspmsr} and \eqref{equX'} with an approximating function of $K(\eps)$.

\subsection{The Horospherical subgroup}
For the unstable horospherical subgroup $H<G$ and the fixed neighborhood of identity $V\subset H$, $a_tVa_{-t}$ can be considered as the $e^{(m+n)t}$-dilated set of $V$. Thus, we have $m_H(a_tVa_{-t})=e^{(m+n)mnt}m_H(V)$ for any $t\in \bR$, since $\dim H=mn$. Also, this expanding property provides us an inductive structure of the measures of expanding translates $\set{\mu_{y,t}}_{t\ge 0}$ as follows, where $y$ is fixed.

\begin{prop}\label{inductive}
For any $0\leq s<t$, $u_0\in V$, $y\in Y$ and $f\in L^{\infty}(Y)$, we have
\eq{\mu_{y,t}(f)=(a_su_0)_*\mu_{y,t-s}(f)+O(\|f\|_{L^\infty} e^{-d(t-s)}),}
\end{prop}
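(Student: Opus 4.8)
The plan is to exploit the expanding structure of $V$ under the action of $a_s$. Fix $y\in Y$ and $0\le s<t$. The key identity is that
\[
\mu_{y,t}(f)=\frac{1}{m_H(V)}\int_V f(a_tuy)\,dm_H(u)
=\frac{1}{m_H(V)}\int_V f\bigl(a_s(a_{t-s}ua_{-(t-s)})\,a_{t-s}y\bigr)\,dm_H(u),
\]
so after the substitution $u'=a_{t-s}ua_{-(t-s)}$, which by the remarks preceding the proposition carries $V$ onto $a_{t-s}Va_{-(t-s)}$ with Jacobian $e^{(m+n)mn(t-s)}$, we get
\[
\mu_{y,t}(f)=\frac{1}{m_H(a_{t-s}Va_{-(t-s)})}\int_{a_{t-s}Va_{-(t-s)}} f(a_su'a_{t-s}y)\,dm_H(u').
\]
In other words $\mu_{y,t}=(a_s)_*\mu'_{y,t-s}$, where $\mu'_{y,t-s}$ is the normalized $m_H$-average of $f(\,\cdot\, a_{t-s}y)$ over the \emph{large} ball $a_{t-s}Va_{-(t-s)}\supset V$ (assuming $\mathrm{id}\in V$; in general $V$ and $a_{t-s}Va_{-(t-s)}$ are comparable translates of expanding neighborhoods).

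Next I would compare $\mu'_{y,t-s}(f\circ a_s)$ with $(a_su_0)_*\mu_{y,t-s}(f)$, i.e. with the average over $u_0V$ of $f(a_su_0 v\,a_{t-s}y)$ for $v\in V$ — equivalently the average of $f\circ a_s$ over the smaller translated ball $u_0 V\subset a_{t-s}Va_{-(t-s)}$. Both are averages of the same bounded function $\varphi:=f(a_s\,\cdot\,a_{t-s}y)$, one over $a_{t-s}Va_{-(t-s)}$ and one over $u_0V$, each normalized by its own $m_H$-mass. Their difference is bounded by $\|\varphi\|_{L^\infty}\cdot\|f\|_{L^\infty}^{-1}\cdot 2\,m_H\bigl(a_{t-s}Va_{-(t-s)}\setminus u_0V\bigr)\big/m_H(a_{t-s}Va_{-(t-s)})$; since $\|\varphi\|_{L^\infty}\le\|f\|_{L^\infty}$ and $m_H(u_0V)=m_H(V)$ while $m_H(a_{t-s}Va_{-(t-s)})=e^{(m+n)mn(t-s)}m_H(V)$, this ratio is $O(e^{-(m+n)mn(t-s)})$. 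To land the stated exponent $e^{-(m+n)(t-s)}$ (rather than $e^{-(m+n)mn(t-s)}$) it suffices to note $mn\ge 1$, so $e^{-(m+n)mn(t-s)}\le e^{-(m+n)(t-s)}$; I would simply record the weaker but sufficient bound.

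The one point requiring a little care — and the main potential obstacle — is that $u_0V$ need not be contained in $a_{t-s}Va_{-(t-s)}$ unless $V$ is, say, convex and contains the identity, and that the symmetric difference estimate must genuinely control $m_H\bigl(a_{t-s}Va_{-(t-s)}\,\triangle\,u_0 V\bigr)$ rather than just the part outside. Since $V$ has compact closure and smooth boundary and $u_0$ ranges over the fixed set $V$, the set $u_0V$ is contained in a fixed bounded neighborhood of $V$, hence (for $t-s$ bounded below) inside $a_{t-s}Va_{-(t-s)}$ up to a set of $m_H$-measure $O(m_H(V))$; dividing by the large normalization $e^{(m+n)mn(t-s)}m_H(V)$ still yields the $O(e^{-(m+n)(t-s)})$ bound. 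For small $t-s$ the claimed estimate is trivial because both sides are $O(\|f\|_{L^\infty})$ and the error term $e^{-(m+n)(t-s)}$ is bounded below. Combining the two displayed comparisons gives
\[
\mu_{y,t}(f)=(a_su_0)_*\mu_{y,t-s}(f)+O\bigl(\|f\|_{L^\infty}e^{-(m+n)(t-s)}\bigr),
\]
as claimed. All constants depend only on $m,n$ and $V$, consistent with the conventions of Section 2.
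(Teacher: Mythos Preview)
Your argument contains a genuine error in the identification of $(a_su_0)_*\mu_{y,t-s}(f)$. By definition,
\[
(a_su_0)_*\mu_{y,t-s}(f)=\frac{1}{m_H(V)}\int_V f\bigl(a_su_0\,a_{t-s}v\,y\bigr)\,dm_H(v),
\]
with the factor $a_{t-s}$ sitting \emph{between} $u_0$ and $v$, not to the right of $v$. You wrote the integrand as $f(a_su_0v\,a_{t-s}y)$, which would indeed give the normalized average of $\varphi(w)=f(a_sw\,a_{t-s}y)$ over the small set $u_0V$; but that expression is not what the pushforward produces. A correct change of variable in your framework yields the average of $\varphi$ over $u_0\,a_{t-s}Va_{-(t-s)}=u_0A$, a translate of the \emph{large} set $A$, not of the small set $V$.

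This matters because your subsequent bound is false as stated: you compare the normalized average of $\varphi$ over $A$ with the normalized average over $u_0V$, and claim the discrepancy is controlled by $m_H(A\setminus u_0V)/m_H(A)$. But $m_H(A\setminus u_0V)/m_H(A)=1-e^{-(m+n)mn(t-s)}\to 1$, not $0$; and in any case, averages of a bounded function over a huge set and a tiny subset can differ by the full $\|\varphi\|_\infty$. The paper avoids this entirely by writing $a_su_0a_{t-s}=a_tu'$ with $u'=a_{-(t-s)}u_0a_{t-s}$ lying in the \emph{contracted} set $a_{-(t-s)}Va_{t-s}$, hence within $O(e^{-(m+n)(t-s)})$ of the identity; then $(a_su_0)_*\mu_{y,t-s}(f)=\frac{1}{m_H(V)}\int_{u'V}f(a_tuy)\,dm_H(u)$ and one compares $u'V$ with $V$, two sets of the \emph{same} measure whose symmetric difference is $\ll e^{-(m+n)(t-s)}$ by smoothness of $\partial V$. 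Equivalently, had you carried the computation correctly, you would compare $A$ with $u_0A$ (same measure), and the bound $m_H(A\triangle u_0A)/m_H(A)\ll e^{-(m+n)(t-s)}$ follows since $A$ is the $e^{(m+n)(t-s)}$-dilate of $V$ and $u_0$ is a bounded translation.
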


\begin{proof}
First, we have
\begin{align*}
    (a_su_0)_*\mu_{y,t-s}(f)&=\frac{1}{m_H(V)}\int_V f(a_su_0a_{t-s}uy)dm_H(u)\\
    &=\frac{1}{m_H(V)}\int_V f(a_t(a_{-(t-s)}u_0a_{(t-s)})uy)dm_H(u),
\end{align*}
Denote by $\triangle$ the symmetric difference $A\triangle B=(A\setminus B)\cup (B\setminus A)$. Let $u'=a_{-(t-s)}u_0a_{(t-s)}$. Note that $a_{-(t-s)}Va_{(t-s)}$ is the $e^{d(t-s)}$-contracted set of $V$ and $u'\in a_{-(t-s)}Va_{(t-s)}$. Since $V$ has a smooth boundary, it follows that $m_H(u'V\triangle V)\ll e^{-d(t-s)}$. By a change of variable, we get
\begin{align*}
    (a_su_0)_*\mu_{y,t-s}(f)&=\frac{1}{m_H(V)}\int_{u'V} f(a_tuy)dm_H(u)\\
    &=\frac{1}{m_H(V)}\left(\int_{V} f(a_tuy)dm_H(u)+O(\|f\|_{L^\infty} m_H(u'V\triangle V))\right)\\
    &=\mu_{y,t}(f)+O(\|f\|_{L^\infty} e^{-d(t-s)}).
\end{align*}

\end{proof}

\subsection{$HH^0H^-$ decomposition of $G$}\label{HH0H-}
We denote by $H^0$ the centralizer of $a$ and $H^-$ be the stable horospherical group for $a$, i.e.
\eq{H^0=\set{\left(\begin{matrix} Z_{11}& 0\\
0&  Z_{22}
\end{matrix}\right)\in G: \operatorname{det}(Z_{11})\operatorname{det}(Z_{22})=1,\; Z_{11}\in\operatorname{GL}_m(\bR),\; Z_{22}\in\operatorname{GL}_n(\bR)},}
\eq{H^-=\set{\left(\begin{matrix} \Id_m& 0\\
A&  \Id_n
\end{matrix}\right)\in G: A\in \operatorname{Mat}_{n,m}(\bR)}.}
We also denote $\widetilde{H}:=HH^0$ and $\widetilde{H}':=H^0H^-$. Let us denote by $\cE\subset G$ the locus of the $n\times n$ lower-right minor, i.e.
$$\cE=\set{g=\left(\begin{matrix} Z_{11}& Z_{12}\\
Z_{21}&  Z_{22}\end{matrix}\right)\in G: \operatorname{det}(Z_{22})=0}.$$
Note that $\cE$ is a submanifold of $G$ with $\dim \cE<\dim G$, and for any element $g\in G\setminus\cE$ there uniquely exist $h_+\in H$, $h_0\in H^0$, and $h_-\in H^-$ such that $g=h_+h_0h_-$. Furthermore, the Haar measure $m_G$ can be also expressed in terms of the Haar measures of $H,H^0$, and $H^-$ as follows: 
\eqlabel{HaarmeasureChange}{\int fdm_G=\int_{H\times H^0\times H^-}f(h_+h_0h_-)\Delta_{\widetilde{H}}(h_0) dm_H(h_+)dm_{H^0}(h_0)dm_{H^-}(h_-)}
for any $f
\in L^1(G)$, where $dm_{H^0}$ and $dm_{H^-}$ are the Haar measures of $H^0$ and $H^-$, respectively, and $\Delta_{\widetilde{H}}(h_0)$ is the modular function of the group $\widetilde{H}$. 

Note that there exists a constant $C_5>1$ such that for any $0<r <\frac{1}{2}$
\eqlabel{Htildeball}{B^{\widetilde{H}}(\operatorname{id},r)\subseteq B^{H}(\operatorname{id},C_5r)B^{H^0}(\operatorname{id},C_5r),}
\eqlabel{Htilde'ball}{B^{\widetilde{H}'}(\operatorname{id},r)\subseteq B^{H^0}(\operatorname{id},C_5r)B^{H^-}(\operatorname{id},C_5r),}
\eqlabel{Gball}{B^G(\operatorname{id},r)\subseteq B^{\widetilde{H}}(\operatorname{id},C_5r)B^{H^-}(\operatorname{id},C_5r),}
\eqlabel{Gball'}{B^G(\operatorname{id},r)\subseteq B^{H}(\operatorname{id},C_5r)B^{\widetilde{H}'}(\operatorname{id},C_5r),}
where the metrics on $H^0$, $H^-$, $\widetilde{H}$, and $\widetilde{H}'$ are induced by the metric of $G$.

We now give an estimate of the size of $h_+,h_0$, and $h_-$ in the decomposition $g=h_+h_0h_-$. For any $g\in G\setminus \cE$ we have
\eqlabel{hdecompositionbound}{\|h_+\|,\|h_0\|,\|h_-\|\ll \max\set{\bd^G(g,\cE)^{-(d-1)},1}\|g\|^{d-1}.}
The proof of this estimate will be given in Appendix \ref{appendixA}.

\subsection{A fundamental domain of $X=\SL_d(\bR)/\SL_d(\bZ)$}\label{subfund} We will fix a fundamental domain of $X$ in order to parametrize elements in $Y$. The standard Siegel domain has been commonly used as a fundamental domain of $\SL_d(\bR)/\SL_d(\bZ)$. For $d\ge 3$, the standard Siegel domains are a weak fundamental domain that allows finite numbers of $g$ satisfying $x=g\Gamma$ for fixed $x\in X$, but it is not sufficient for our purpose. To avoid technical complexity, we construct a strong fundamental domain $\cF$ as follows in order to allow unique $g\in\cF$ satisfying $x=g\Gamma$. We define a continuous function $F:G\to\bR_{>0}$ by
$$F(g)^2=\frac{(\sum_{i,j}|g_{ij}|^2)(\sum_{i,j}|(g^{-1})_{ij}|^2)}{\sum_{i,j}|g_{ij}|^2+\sum_{i,j}|(g^{-1})_{ij}|^2},$$
where $g_{ij}$'s are matrix components of $g\in \SL_d(\bR)$. 
Hereafter, we fix a connected fundamental domain $\cF\subset G$ of $X$ which consists of every element in $\set{g\in G: F(g)< F(g\gamma) \mbox{  for all } \gamma\in\Gamma\setminus\set{\operatorname{id}}}$ and properly chosen elements from its boundary. Denote by $\phi: G\to X$ the canonical projection.
Then $\cF$ satisfies the following properties. See Appendix \ref{appendixA} for the concrete construction and the proofs.
\begin{enumerate}
    \item For any $x\in X$, there uniquely exists $g\in \cF$ such that $x=g\Gamma$. Moreover, we can define a measure-preserving map $\iota: X\to \cF$ such that $\phi\circ \iota=\operatorname{id}_{X\to X}$ and $\iota|_{\phi(\cF^\circ)}$ is continuous, where $\cF^\circ$ denotes the interior of $\cF$. We also have $m_G(\cF)=m_G(\cF^\circ)=m_X(\phi(\cF^\circ))=m_X(X)=1$.
    \item There exists $C_6>1$ such that for any $x\in X$, 
    \eqlabel{iotaht}{\|\iota(x)\|\leq C_6\textrm{ht}(x)^{d-1}.} Hence, for any $\eps>0$,
\eqlabel{fundest}{m_G(\set{g\in \cF: \|g\|> \eps^{-1}})\leq m_X\left(\set{x\in X: \textrm{ht}(x)> C_6^{-\frac{1}{d-1}}\eps^{-\frac{1}{d-1}}}\right)\leq C_7\eps^{\frac{d}{d-1}}} for some constant $C_7>1$ by \eqref{cspmsr}. 
    \item For $r>0$ and $\eps>0$, let 
    $$\cF(r,\eps):=\set{g\in\cF: \textrm{ht}(g\Gamma)\leq \eps^{-1}, \bd^G(g,\partial\cF)\ge r, \bd^G(g,\cE^{-1})\ge r^{\frac{1}{20d}}},$$ where $\partial\cF$ denotes the boundary of $\cF$ and $\cE^{-1}:=\set{g\in G: g^{-1}\in\cE}$. Let $\kappa_3:=\frac{1}{100d^3}$. Then there exists a constant $C_8>1$ satisfying 
\eqlabel{Fest}{m_G(\cF\setminus \cF(r,\eps))\leq C_8\max\set{r^{\kappa_3},\eps^d}} 
for any $r>0$ and $\eps>0$.
\end{enumerate}

\subsection{Parametrization of $Y$}
For a point $y\in Y$, it is often convenient to view $y$ as a point on the fiber $\pi^{-1}(x)$, where $x=\pi(y)$. The fiber $\pi^{-1}(x)$ can be identified with the torus $\bT^d$ by a map $b\mapsto gw(b)\hat{\Gamma}$, where $b\in\bT^d$ and $g\in G$ with $x=g\Gamma$, but this mapping depends on the choice of $g\in G$. For example, $gw(b)\hat{\Gamma}=(g\gamma^{-1})w(\gamma b)\hat{\Gamma}$ for any $\gamma\in \Gamma$.

However, if we fix a fundamental domain, then we have a well-defined parametrization of $Y$. Throughout this paper, we fix $\cF\subset G$ and $\iota$ as in §\ref{subfund}. For any $y\in Y$, there exists unique $b\in\bT^d$ such that $y=\iota(\pi(y))w(b)\hat{\Gamma}$. Denote this map $y\mapsto b$ by $\sigma: Y\to \bT^d$. Then we have a parametrization $y=\iota(\pi(y))w(\sigma(y))\hat{\Gamma}$. We remark that the map $\sigma$ is not defined canonically unlike $\pi: Y\to X$ and depends on the choice of the fundamental domain $\cF$.

\subsection{A partition of unity of $X$}
In this subsection, we will partition $X$ into small boxes, and construct a partition of unity of $X$. For $\mathbf{r}=(r_+,r_0,r_-)$ with $0<r_+,r_0,r_-<\frac{1}{2}$ we define
$$\sB_{\mathbf{r}}:=B^H(\operatorname{id},r_+)B^{H^0}(\operatorname{id},r_0)B^{H^-}(\operatorname{id},r_-)$$
and also write $\sB_{\mathbf{r}}(x):=B^H(\operatorname{id},r_+)B^{H^0}(\operatorname{id},r_0)B^{H^-}(\operatorname{id},r_-)x$ for $x\in X$. Using \eqref{HaarmeasureChange} we estimate the volume of the box $\sB_{\mathbf{r}}$ as follows: \eq{\begin{aligned}m_G(\mathsf{B}_{\mathbf{r}})&\asymp m_H(B^H(\operatorname{id},r_+))m_{H^0}(B^{H^0}(\operatorname{id},r_0))m_{H^-}(B^{H^-}(\operatorname{id},r_-))\\&\asymp r_+^{\dim H}r_0^{\dim H^0}r_-^{\dim H^-}.\end{aligned}} 

We may choose nonnegative smooth approximating functions $\psi_{r_+}^{H}\in C_c^{\infty}(H)$, $\psi_{r_0}^{H^0}\in C_c^{\infty}(H^0)$, and $\psi_{r_-}^{H^-}\in C_c^{\infty}(H^-)$ such that 
$$\Supp \psi_{r_+}^{H}\subseteq B^H(\operatorname{id},r_+), \; \Supp \psi_{r_0}^{H^0}\subseteq B^{H^0}(\operatorname{id},r_0), \; \Supp \psi_{r_-}^{H^-}\subseteq B^{H^-}(\operatorname{id},r_-), $$
$$\int_H \psi_{r_+}^{H}(h_+)dm_H(h_+)=\int_{H^0} \psi_{r_0}^{H^0}(h_0)dm_{H^0}(h_0)=\int_{H^-} \psi_{r_-}^{H^-}(h_-)dm_{H^-}(h_-)=1.$$ Moreover, it is also possible to take such $\psi_{r_+}^{H}$, $\psi_{r_0}^{H^0}$, and $\psi_{r_-}^{H^-}$ to satisfy 
$$\|\psi_{r_+}^{H}\|_{L^\infty(H)}\ll r_+^{-\dim H}, \qquad \|\triangledown\psi_{r_+}^{H}\|_{L^\infty(H)}\ll r_+^{-(\dim H+1)},$$
$$\|\psi_{r_0}^{H^0}\|_{L^\infty(H^0)}\ll r_0^{-\dim H^0},\qquad \|\triangledown\psi_{r_0}^{H^0}\|_{L^\infty(H^0)}\ll r^{-(\dim H^0+1)},$$
$$\|\psi_{r_-}^{H^-}\|_{L^\infty(H^-)}\ll r_-^{-\dim H^-},\qquad \|\triangledown\psi_{r_-}^{H^-}\|_{L^\infty(H^-)}\ll r^{-(\dim H^-+1)},$$ where the implied constants do not depend on $r_+$, $r_0$, and $r_-$. 

We now define a nonnegative smooth approximating function on $G$ by
$$\psi_{\mathbf{r}}(h_+h_0h_-):=\psi_{r_+}^{H}(h_+)\psi_{r_0}^{H^0}(h_0)\psi_{r_-}^{H^-}(h_-)\Delta_{\widetilde{H}}(h_0)^{-1},$$
where $h_+\in H$, $h_0\in H^0$, and $h_-\in H^-$. This function is clearly supported on $\mathsf{B}_{\mathbf{r}}$, and $\int \psi_{\mathbf{r}}dm_G=1$ by \eqref{HaarmeasureChange}. Moreover,
$\|\psi_{\mathbf{r}}\|_{L^{\infty}(G)}\ll m_G(\sB_{\mathbf{r}})^{-1}$ and $\|\triangledown\psi_{\mathbf{r}}\|_{L^{\infty}(G)}\ll \mathbf{r}_{\operatorname{min}}^{-d^2}$, where $\mathbf{r}_{\operatorname{min}}:=\min\set{r_+,r_0,r_-}$.

We remark that with this approximating function, one can estimate the measure of a $\mathsf{B}_{\mathbf{r}}$-box in $X$ with respect to $\pi_*\mu_{y,t}$, as in \eqref{ballest}. For any $e^{-\kappa_2 t}<\mathbf{r}_{\operatorname{min}}<\frac{1}{2}$ and $x\in X(C_4\mathbf{r}_{\operatorname{min}}^{\frac{1}{d}})$,
\eqlabel{boxest}{\pi_*\mu_{y,t}(\mathsf{B}_{\mathbf{r}}x)\asymp m_G(\mathsf{B}_{\mathbf{r}})\asymp r_+^{\dim H}r_0^{\dim H^0}r_-^{\dim H^-}}
if $\textrm{ht}(\pi(y))<e^{\kappa_2 t}$.

\begin{prop}\label{partition}
There exist constants $C_9, C_{10}, C_{11}>1$ such that the following holds. Let $\mathbf{r}=(r_+,r_0,r_-)$ for $0<r_-\leq r_0=r_+<\frac{1}{2C_9^3}$. Then there exist a set $\set{x_1,\cdots,x_{N_{\mathbf{r}}}}\subset K(C_{10} r_0^{\frac{1}{d}})$ with $N_{\mathbf{r}}\asymp r_+^{-\dim H}r_0^{-\dim H^0}r_-^{-\dim H^-}$ and a partition of unity $\set{\psi_{\mathbf{r},i}}_{i\in\cI_{\mathbf{r}}}$ of $X$ with $\cI_{\mathbf{r}}:=\set{1,\cdots,N_{\mathbf{r}}}\cup\set{\infty}$ satisfying the following properties:
\begin{itemize}
    \item $0\leq \psi_{\mathbf{r},i}\leq 1$ $\mbox{  for all } i\in\cI_{\mathbf{r}}$,
    \item $\mathds{1}_{\sB_{\mathbf{r}}(x_i)}\leq\psi_{\mathbf{r},i}\leq\mathds{1}_{\sB_{C_9^3\mathbf{r}}(x_i)}$ $\mbox{  for all } i\in\cI_{\mathbf{r}}\setminus\{\infty\}$,
    \item $\Supp\psi_{\mathbf{r},\infty}\subseteq X\setminus K(2C_{10}r_0^{\frac{1}{d}})$,
    \item $\displaystyle\sum_{i\in\cI_{\mathbf{r}}}\psi_{\mathbf{r},i}=\mathds{1}_X$,
    \item $\|\triangledown\psi_{\mathbf{r},i}\|_{L^\infty(X)}\leq C_{11}r_-^{-(d^2-mn)}r_0^{d^2-mn-1}$ $\mbox{  for all } i\in\cI_{\mathbf{r}}\setminus\{\infty\}$.
\end{itemize}
\end{prop}
\begin{proof}
We first claim that $\sB_{\mathbf{r}}^{-1}\sB_{\mathbf{r}}\subseteq \sB_{C_9\mathbf{r}}$ and $\sB_{\mathbf{r}}\sB_{\mathbf{r}}\subseteq \sB_{C_9\mathbf{r}}$ for $C_9:=6C_5^2$. Here we will only prove $\sB_{\mathbf{r}}^{-1}\sB_{\mathbf{r}}\subseteq \sB_{C_9\mathbf{r}}$ but the latter one is also obtained by a similar calculation. Since $B^H(\operatorname{id},r_+)B^{H^0}(\operatorname{id},r_0)$ is contained in $B^{\widetilde{H}}(\operatorname{id},2r_0)$, it is enough to show that $(\widetilde{h}h_{-})^{-1}\widetilde{h}'h_{-}'\in\sB_{C_9\mathbf{r}}$ for any $\widetilde{h},\widetilde{h}'\in B^{\widetilde{H}}(\operatorname{id},2r_0)$ and $h_{-},h_{-}'\in B^{H^-}(\operatorname{id},r_-)$. Indeed, 
\eq{\begin{aligned}
    (\widetilde{h}h_{-})^{-1}\widetilde{h}'h_{-}'&=(\widetilde{h}^{-1}\widetilde{h}')\big((\widetilde{h}^{-1}\widetilde{h}')^{-1}h_{-}^{-1}(\widetilde{h}^{-1}\widetilde{h}')\big)h_{-}'\\&\in B^{\widetilde{H}}(\operatorname{id},4r_0)B^G(\operatorname{id},2r_-)B^{H^-}(\operatorname{id},r_-)
\end{aligned}} and
\eq{\begin{aligned}B^{\widetilde{H}}(\operatorname{id},4r_0)&B^G(\operatorname{id},2r_-)B^{H^-}(\operatorname{id},r_-)\subseteq B^{\widetilde{H}}(\operatorname{id},(2C_5+4)r_0)B^{H^-}(\operatorname{id},(2C_5+1)r_-)\\&\subseteq B^{H^0}(\operatorname{id},6C_5^2r_0)B^{H^0}(\operatorname{id},6C_5^2r_0)B^{H^-}(\operatorname{id},3C_5r_-)\end{aligned}}
by \eqref{Htildeball} and \eqref{Gball}, hence the claim follows.

Take $\set{x_1,\cdots,x_{N_{\mathbf{r}}}}$ to be a maximal $\sB_{C_9\mathbf{r}}$-separated subset of $K(C_{10}r_0^{\frac{1}{d}})$, where $C_{10}:=C_4(3C_9)^{\frac{1}{d}}$. Then for any $1\leq i\leq N_{\mathbf{r}}$, the map $g\mapsto gx_i$ is injective on the box $\sB_{C_9\mathbf{r}}$. We get a partition $\cP_{\mathbf{r}}=\set{P_i}_{i\in\cI_{\mathbf{r}}}$ of $X$ by letting
$$P_i=\sB_{C_9^2\mathbf{r}}(x_i)\setminus\big(\displaystyle\bigcup_{j=1}^{i-1}P_j\cup\displaystyle\bigcup_{j=i+1}^{N_{\mathbf{r}}}\sB_{C_9\mathbf{r}}(x_j)\big)$$
for $1\leq i\leq N_{\mathbf{r}}$ and $P_\infty=X\setminus\displaystyle\bigcup_{j=1}^{N_{\mathbf{r}}}P_j$ inductively. Then for $1\leq i\leq N_{\mathbf{r}}$, $\sB_{C_9\mathbf{r}}(x_i)\subseteq P_i\subseteq \sB_{C_9^2\mathbf{r}}(x_i)$ by the construction. Since $m_X(P_i)\asymp m_G(\mathsf{B}_{\mathbf{r}})$ for each $1\leq i\leq N_{\mathbf{r}}$, we have $N_{\mathbf{r}}\asymp r_+^{-\dim H}r_0^{-\dim H^0}r_-^{-\dim H^-}$. To construct a partition of unity, we consider a non-negative smooth approximating function $\psi_{\mathbf{r}}\in C_c^{\infty}(G)$ as constructed above. Then $\psi_{\mathbf{r},i}:=\psi_{\mathbf{r}}*\mathds{1}_{P_i}$'s satisfy the desired properties. In particular, for any monomial $\cD\in\cB$ of degree $1$, $1\leq i\leq N_{\mathbf{r}}$, and $x\in X$ we have
\eq{\begin{aligned}
    |\cD\psi_{\mathbf{r},i}(x)|&=\left|\int_G \cD\psi_{\mathbf{r}}(g)\mathds{1}_{P_i}(g^{-1}x)dm_G(g)\right|\\
    &\leq \|\triangledown\psi_{\mathbf{r}}\|_{L^{\infty}(G)}\int_G \mathds{1}_{P_i}(g^{-1}x)dm_G(g)\\&\leq r_-^{-d^2}m_G(\sB_{C_9^2\mathbf{r}})\ll r_-^{-d^2}(r_+^{\dim H}r_0^{\dim H^0}r_-^{\dim H^-}),
\end{aligned}}
hence $\|\triangledown\psi_{\mathbf{r},i}\|_{L^\infty(X)}\ll r_-^{-(d^2-mn)}r_0^{d^2-mn-1}$ for all $i\in\cI_{\mathbf{r}}\setminus\{\infty\}$.
\end{proof}

In case of $r=r_+=r_0=r_-$, we will denote $\sB_r:=\sB_{r,r,r}$, $\psi_{r,i}:=\psi_{r,r,r,i}$, $N_{r}:=N_{r,r,r}$, and $\cI_{r}:=\cI_{r,r,r}$ for simplicity.

\subsection{Dual affine lattices: reduction to the case $m\leq n$}\label{dual}
In the later sections, we will need an assumption $m\leq n$. More precisely, we will need the property that the expanding rate $e^{nt}$ (when the diagonal flow $a_t$ acts on the unipotent radical $\bR^d$) is not slower than the contracting rate $e^{-mt}$. In this subsection, we claim that we may assume $m\leq n$.

Recall that an element $y$ in $Y=\operatorname{ASL}_d(\bR)/\operatorname{ASL}_d(\bZ)$ is parametrized by $y=gw(b)\hat{\Gamma}$, where $g\in G$ and $b\in\bT^d$. Let us consider the dual map $\varsigma:Y\to Y$ defined by $\varsigma(gw(b)\hat{\Gamma})=(g^{tr})^{-1}w(b)\hat{\Gamma}$. Since $\Gamma=\operatorname{SL}_d(\bZ)$ is preserved by the dual map $g\mapsto (g^{tr})^{-1}$, $\varsigma$ is well-defined. Note that the Haar measure $m_Y$ is also preserved by $\varsigma$, $\varsigma^2=\operatorname{id}_Y$, and $\varsigma(gy)=(g^{-1})^{tr}\varsigma(y)$ for any $g\in G$ and $y\in Y$.

Suppose that we already established Theorem \ref{mainthm} with $m\leq n$. If $n<m$, for $f\in C_c^\infty(Y)$ let us denote $f^*:=f\circ \varsigma$. Then we have
\eq{\begin{aligned}
    \frac{1}{m_H(V)}\int_{V}f(a_tuy)dm_H(u)&=\frac{1}{m_H(V)}\int_{V}f^*\big(\varsigma(a_tuy)\big)dm_H(u)\\&=\frac{1}{m_H(V)}\int_{V}f^*\big((a_t^{tr})^{-1}(u^{tr})^{-1}y\big)dm_H(u).
\end{aligned}}
Note that the push-forward of $m_H$ by the map $H\in u\mapsto (u^{tr})^{-1}\in H^{-}$ is $m_{H^{-}}$. Denoting by $V^{*}$ the image of $V$ by the map $u\mapsto (u^{tr})^{-1}$,
\eqlabel{dualintegral}{\frac{1}{m_H(V)}\int_{V}f(a_tuy)dm_H(u)= \frac{1}{m_{H^{-}}(V^*)}\int_{V^*}f^*(a_{-t}uy)dm_{H^{-}}(u).}

Let us consider a permutation matrix $M=\left(\begin{matrix}
    0_{m,n} & \operatorname{Id}_n \\ \operatorname{Id}_m & 0_{n,m}
\end{matrix}\right)\in G$ and define $f^*_M\in C_c^\infty(Y)$ by $f^*_M(y)=f^*(M^{-1}y)$. Then the conjugation of $M$ acts on $a_{-t}$ and $H^-$ as follows:
$$\operatorname{Ad}_M(a_{-t})=\diag{e^{mt}\Id_n,e^{-nt}\Id_m},$$
$$\operatorname{Ad}_M\left(\begin{matrix} \Id_m& 0\\
A&  \Id_n
\end{matrix}\right)=\left(\begin{matrix} \Id_n& A\\
0&  \Id_m
\end{matrix}\right)$$
for any $A\in \operatorname{Mat}_{n,m}(\bR)$. In particular, $\operatorname{Ad}_M(H^-)$ is the expanding horospherical group of $\operatorname{Ad}_M(a_{-1})$, and $(\operatorname{Ad}_M)_*(m_{H^-})$ is the Haar measure of the expanding horospherical group of $\operatorname{Ad}_M(a_{-1})$.

Observe that the role of $m$ and $n$ are now swapped. Writing 
$$f^*(a_{-t}uy)=f^*_M\big(\operatorname{Ad}_M(a_{-t})\operatorname{Ad}_M(u)(My)\big),$$ we may apply the assumed Theorem \ref{mainthm} with $m\leq n$:
$$\frac{1}{m_{H^{-}}(V^*)}\int_{V^*}f^*(a_{-t}uy)dm_{H^{-}}(u)=\int_Y f_M^*dm_Y+O\left(\cS(f_M^*)\zeta(b,e^{\frac{nt}{2}})^{-\del'}\right).$$
Note that $\int f^*_Mdm_Y=\int f^*dm_y=\int fdm_Y$, $My=Mgw(b)\hat{\Gamma}$, $\|Mg\|\ll\|g\|$, and $\cS(f^*_M)\ll \cS(f\circ\varsigma)$. To sum up, we obtain
\eqlabel{dualequidistribution}{\frac{1}{m_H(V)}\int_{V}f(a_tuy)dm_H(u)=\int_Y fdm_Y+O\left(\cS(f\circ \varsigma)\zeta(b,e^{\frac{nt}{2}})^{-\del'}\right).}
Regarding the Sobolev norm, suppose that the Sobolev norm in \eqref{dualequidistribution} is of degree $l$, i.e. $\cS=\cS^Y_l$. Since $\varsigma$ is an isometry on $X$, it holds that $\cS^X_l(f\circ \varsigma)\ll\cS^X_l(f)$. On each fiber, we have an estimate
$$\|gb\|\leq \| g\|_{\operatorname{op}}^2\|g^{*}b\|\leq C_3^2C_6^2\operatorname{ht}(g\Gamma)^{2d-2}\|g^{*}b\|$$
for any $g\in G$ and $b\in\bR^d$, using \eqref{opnorm} and \eqref{iotaht}. It follows that
$$ \cD(f\circ\varsigma)(y)\ll \operatorname{ht}(\pi(y))^{(2d-2)l}\cD f(y)$$
for any $y\in Y$ and $\cD$ of degree $\leq l$. Let $\cS'$ be the Sobolev norm of degree $(2d-1)l$, i.e. $\cS'=\cS^Y_{(2d-1)l}$. Then
$$\cS(f\circ \varsigma)^2=\displaystyle\sum_{\cD}\|\textrm{ht}(\pi(y))^l\cD (f\circ\varsigma)\|^2_{L^2}\ll \displaystyle\sum_{\cD}\|\textrm{ht}(\pi(y))^{(2d-1)l}\cD f\|^2_{L^2}=\cS'(f)^2,$$
hence the error term in \eqref{dualequidistribution} is $O(\cS'(f)\zeta(b,e^{\frac{nt}{2}})^{-\del'})$. Therefore, Theorem 1.3 with $m>n$ follows from the case $m\leq n$.

In the rest of this paper, we assume $m\leq n$.

\section{Inductive structures of the measures of expanding translates}
\subsection{Well-separateness of $\Gamma^{tr}$-orbit in $\bZ^d$}
For fixed $s>0$,
we define two maps $\xi=\xi_s: X\times V\to\cF$ and $\gamma=\gamma_s: X\times V\to\Gamma$ as follows: For any $x\in X$ and $u\in V$, there uniquely exist $\xi(x,u)\in\cF$ and $\gamma(x,u)\in\Gamma$ such that
\eqlabel{defgam'}{a_su\iota(x)=\xi(x,u)\gamma(x,u)}
by the definition of the fundamental domain $\cF$. The map $\xi(x,u)$ encodes the projected orbit of $a_su$-action onto $X$ as $a_sux=\xi(x,u)\Gamma$. Also, the map $\gamma(x,u)$ describes the orbit of $a_su$-action on the fiber tori as in the following lemma.

\begin{lem}\label{siggam}
For any $y\in Y$ and $u\in V$, we have $\sigma(a_suy)=\gamma(x,u)\sigma(y)$, where $x=\pi(y)$.
\end{lem}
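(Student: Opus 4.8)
The plan is to unwind the definitions of $\sigma$, $\xi$, and $\gamma$ and use the fixed parametrization of $Y$ together with the relation $gw(b) = w(gb)g$. First I would write $y = \iota(x)w(\sigma(y))\hat\Gamma$ by the definition of $\sigma$, where $x = \pi(y)$. Applying $a_su$ on the left gives $a_su\, y = a_su\iota(x)w(\sigma(y))\hat\Gamma$. Now I would substitute the defining relation \eqref{defgam}, namely $a_su\iota(x) = \xi(x,u)\gamma(x,u)$, to obtain $a_su\,y = \xi(x,u)\gamma(x,u)w(\sigma(y))\hat\Gamma$.

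Next I would push the lattice element $\gamma(x,u)\in\Gamma$ past $w(\sigma(y))$ using the commutation relation $gw(b) = w(gb)g$ recorded in the introduction: this gives $\gamma(x,u)w(\sigma(y)) = w(\gamma(x,u)\sigma(y))\gamma(x,u)$, so that
\[
a_su\,y = \xi(x,u)\,w(\gamma(x,u)\sigma(y))\,\gamma(x,u)\hat\Gamma = \xi(x,u)\,w(\gamma(x,u)\sigma(y))\,\hat\Gamma,
\]
where in the last step I used that $\gamma(x,u)\in\Gamma\subset\hat\Gamma$ (under the chosen lift) so it is absorbed into the coset $\hat\Gamma$. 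Here $\gamma(x,u)\sigma(y)\in\bT^d$ is understood modulo $\bZ^d$, which is harmless since $w(b)\hat\Gamma$ only depends on $b$ mod $\bZ^d$ and $\gamma(x,u)$ preserves $\bZ^d$.

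Finally I would identify the two sides with the canonical parametrization of the point $a_su\,y$. Since $\xi(x,u)\in\cF$ and $\pi(a_su\,y) = a_sux = \xi(x,u)\Gamma$, the definition of $\iota$ forces $\iota(\pi(a_su\,y)) = \xi(x,u)$. Therefore $a_su\,y = \iota(\pi(a_su\,y))\,w(\gamma(x,u)\sigma(y))\,\hat\Gamma$, and by the uniqueness clause in the definition of $\sigma$ this yields $\sigma(a_su\,y) = \gamma(x,u)\sigma(y)$, as claimed. The only mild subtlety — and the one point I would be careful about — is making sure the absorption of $\gamma(x,u)$ into $\hat\Gamma$ and the reduction mod $\bZ^d$ are compatible with the fixed fundamental-domain parametrization; but this is exactly guaranteed by $\xi(x,u)\in\cF$ together with property (1) of $\cF$ in §\ref{subfund}, so no genuine obstacle arises.
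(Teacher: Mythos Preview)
Your proof is correct and follows exactly the same approach as the paper's own proof: write $y=\iota(x)w(\sigma(y))\hat\Gamma$, substitute $a_su\iota(x)=\xi(x,u)\gamma(x,u)$, commute $\gamma(x,u)$ past $w(\sigma(y))$, absorb $\gamma(x,u)\in\hat\Gamma$, and read off $\sigma$ from the resulting $\cF$-parametrization. The paper simply compresses these steps into a three-line display, whereas you have spelled out the justifications more carefully.
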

\begin{proof}
It is a direct consequence of the definitions of $\sigma$ and $\gamma$:
\begin{align*}
    \sigma(a_suy)&=\sigma(a_su\iota(x)w(\sigma(y))\hat{\Gamma})\\
    &=\sigma(\xi(x,u)\gamma(x,u)w(\sigma(y))\hat{\Gamma})\\
    &=\sigma(\xi(x,u)w(\gamma(x,u)\sigma(y))\hat{\Gamma})=\gamma(x,u)\sigma(y).
\end{align*}
\end{proof}

For fixed $\bm_0\in\bZ^d\setminus\set{0}$, we define two maps $\bx:X\times V\to\bR^m$ and $\by:X\times V\to\bR^n$ satisfying the following:
\eqlabel{defxy}{(\xi(x,u)^{tr})^{-1}\bm_0=\left(\begin{matrix} \bx(x,u)\\
\by(x,u)
\end{matrix}\right)\in\bR^m\times\bR^n.}

\begin{lem}\label{Vxe}
For $0<\eps\leq\frac{1}{2}$ and $x\in X$ denote by $V_{x,\eps}$ the set of elements $u\in V$ satisfying:
\begin{enumerate}
    \item $\|\xi(x,u)\|<\eps^{-1},$
    \item $\|\bx(x,u)\|> \eps^2\|\bm_0\|.$
\end{enumerate}
If $\eps>e^{-\kappa_2 s}$ and $\textrm{ht}(x)<e^{\kappa_2 s}$, then $m_H(V\setminus V_{x,\eps})\leq C_{13}\eps^{\frac{\kappa_3}{2}}$ for some $C_{13}>1$.
\end{lem}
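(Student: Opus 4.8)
The goal is to show that the "bad set" $V\setminus V_{x,\eps}$ — the set of $u\in V$ where either the height/norm of $\xi(x,u)$ is too large, or the first block $\bx(x,u)$ of the transported covector is too small — has $m_H$-measure $\ll \eps^{\kappa_3/2}$. My plan is to bound separately the two bad events. Write $V\setminus V_{x,\eps} \subseteq A_1\cup A_2$, where $A_1=\{u\in V:\|\xi(x,u)\|\ge\eps^{-1}\}$ and $A_2=\{u\in V: \|\bx(x,u)\|\le\eps^2\|\bm_0\|\}$. For $A_1$: since $a_su\iota(x)=\xi(x,u)\gamma(x,u)$ with $\gamma(x,u)\in\Gamma$, we have $\xi(x,u)\Gamma=a_sux$, so $\|\xi(x,u)\|$ is (up to the bounded distortion built into $\cF$ via \eqref{iotaht}) comparable to $\operatorname{ht}(a_sux)^{d-1}$. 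Thus $A_1$ is, up to constants, the event that $a_sux$ is $\eps^{1/(d-1)}$-deep in the cusp. Pushing this through \eqref{cuspest} (valid because $\eps>e^{-\kappa_2 s}$ forces $\eps^{1/(d-1)}>e^{-\kappa_2 s}$ after possibly shrinking $\kappa_2$, and $\operatorname{ht}(x)<e^{\kappa_2 s}$) together with the translate-measure identification $m_H$ versus $\pi_*\mu_{y,s}$, gives $m_H(A_1)\ll (\eps^{1/(d-1)})^d = \eps^{d/(d-1)}\ll \eps^{\kappa_3/2}$, since $\kappa_3\le 1\le 2d/(d-1)$. Actually one must also discard the $u$ with $\xi(x,u)$ within distance $r$ of $\partial\cF$ and the cuspidal part; this is exactly what the estimate \eqref{Fest} for $m_G(\cF\setminus\cF(r,\eps))$ is designed to absorb, combined once more with \eqref{equX'}/\eqref{ballest}, and it contributes $\ll\max(r^{\kappa_3},\eps^d)$; choosing $r\asymp\eps^{1/2}$ gives the $\eps^{\kappa_3/2}$ that names the constant in the statement.

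For $A_2$, the event $\|\bx(x,u)\|\le\eps^2\|\bm_0\|$: here I would use the fact that $\bx(x,u)$ is the orthogonal projection onto the first $m$ coordinates of $(\xi(x,u)^{tr})^{-1}\bm_0 = (\gamma(x,u)^{tr})(a_su\iota(x))^{-tr}\bm_0$. Writing $v(u):=(u\iota(x))^{-tr}\bm_0$ (which depends real-analytically on $u\in V$ and is bounded below in norm on $V$ by a constant times $\|\bm_0\|/\|\iota(x)\|^{\mathrm{op}}\cdot$const, controllable since on the good part $\|\iota(x)\|$ is polynomial in $\eps^{-1}$ — wait, this is where care is needed), the diagonal $a_s^{-tr}=\operatorname{diag}(e^{-nt}\Id_m,e^{mt}\Id_n)$ \emph{contracts} the first block and expands the second. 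So $\bx(x,u)$ small is essentially automatic unless $\gamma(x,u)^{tr}$ rotates a large-norm second block back into the first block — but $\gamma(x,u)$ is precisely the element putting $a_su\iota(x)$ back in $\cF$, so on the set where $\|\xi(x,u)\|<\eps^{-1}$ it is controlled. The cleaner route: observe $\bx(x,u)$ is (a block of) the image of $\bm_0$ under $(a_sux)^{-tr}$ in the $\cF$-frame, and $\|(a_sux)^{-tr}\bm_0\|\ge\operatorname{ht}(a_sux)^{-1}\|\bm_0\|$ trivially, but we need the \emph{first block} to be non-small, which is a genuinely finer statement: the function $u\mapsto\bx(x,u)$ is a nonconstant real-analytic map on the connected set $V$ (using that $a_s$ genuinely mixes the unipotent directions), so its zero set has measure zero, and a quantitative transversality/Remez-type argument bounds the sublevel set $\{\|\bx\|\le\eps^2\|\bm_0\|\}$ by $\ll(\eps^2)^{\alpha}$ for some exponent $\alpha>0$ depending only on the degree, hence on $m,n$; shrinking the exponent to $\kappa_3/2$ if necessary finishes. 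The non-vanishing and the lower bound on the relevant derivatives should be made uniform in $x$ on the good set $\operatorname{ht}(x)<e^{\kappa_2 s}$, which is where $\|\iota(x)\|\le C_5\operatorname{ht}(x)^{d-1}$ and the constraint $\eps>e^{-\kappa_2 s}$ get used to keep all constants admissible.

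\textbf{Main obstacle.} The delicate part is the bound on $A_2$: controlling $\|\bx(x,u)\|$ from below for most $u$. Bounding $\|\xi(x,u)\|$ (the $A_1$ part) is a routine consequence of effective equidistribution in $X$ plus the cusp and boundary estimates \eqref{cuspest}, \eqref{Fest}. But $\bx(x,u)$ picks out one coordinate block of a transported integer covector, and showing this block is not anomalously small requires a genuine argument — either a quantitative Weyl-type / transversality estimate on the analytic map $u\mapsto\bx(x,u)$, or a more dynamical argument showing that the expanding directions of $a_s^{tr}$ cannot all be concentrated in the kernel-complement of the projection for a large-measure set of $u$. Making that estimate uniform over all $x$ with $\operatorname{ht}(x)<e^{\kappa_2 s}$, and matching the final exponent to $\kappa_3/2$, is the crux; everything else is bookkeeping with the estimates already assembled in the Preliminaries.
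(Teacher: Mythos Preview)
Your treatment of $A_1$ is essentially the paper's and is correct: since $\xi(x,u)=\iota(a_sux)$, the bound $\|\xi(x,u)\|\ge\eps^{-1}$ forces (via \eqref{iotaht}) $\operatorname{ht}(a_sux)\gg\eps^{-1/(d-1)}$, and the cusp estimate \eqref{cuspest} gives $m_H(A_1)\ll\eps^{d/(d-1)}$. (The extra boundary-discarding you mention is unnecessary for this part; $\eps^{d/(d-1)}\le\eps^{\kappa_3/2}$ already since $\kappa_3\le 1$.)

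Your approach to $A_2$, however, has a genuine gap. You assert that $u\mapsto\bx(x,u)$ is a nonconstant real-analytic map on $V$ and propose a Remez-type sublevel-set bound. But $\bx(x,u)$ is the first block of $(\xi(x,u)^{tr})^{-1}\bm_0$, and $\xi(x,u)=a_su\iota(x)\gamma(x,u)^{-1}$ involves the element $\gamma(x,u)\in\Gamma$, which is only \emph{piecewise constant} in $u$: it jumps whenever $a_su\iota(x)$ crosses a translate of $\partial\cF$. Thus $u\mapsto\bx(x,u)$ is not even continuous on $V$, and no global Remez/transversality argument applies. Restricting to regions of constant $\gamma$ would require controlling both the number of such regions and the uniformity of the analytic constants across them, which is at least as hard as the original problem.

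The paper's route is entirely different and sidesteps this discontinuity by converting the condition on $u$ into a geometric condition on the point $a_sux\in X$. Working on $A_2\setminus A_1$ (so $\|\xi(x,u)\|<\eps^{-1}$), one has $\|(\xi(x,u)^{tr})^{-1}\bm_0\|>C_3^{-1}\eps\|\bm_0\|$; hence if $\|\bx(x,u)\|\le\eps^2\|\bm_0\|$ then $\|\by(x,u)\|\gg\eps\|\bm_0\|$, and a small correction $h\in B^G(\operatorname{id},O(\eps))$ sends $(\xi(x,u)^{tr})^{-1}\bm_0$ into $\{\mb{0}_m\}\times\bR^n$. Thus $\xi(x,u)$ lies within distance $O(\eps)$ of the fixed codimension-$m$ analytic submanifold $\cM:=\{g\in G:(g^{tr})^{-1}\bm_0\in\{\mb{0}_m\}\times\bR^n\}$. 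This is now a condition on $a_sux=\xi(x,u)\Gamma$, namely membership in $\cC(\eps)\Gamma$ where $\cC(\eps)=(\text{$O(\eps)$-neighborhood of }\cM)\cap\cF$. One bounds $m_X(\cC(\eps)\Gamma)=m_G(\cC(\eps))$ by splitting off $\cF\setminus\cF(\eps^{1/2},C_4\eps^{1/(2d)})$ via \eqref{Fest} (contributing $\ll\eps^{\kappa_3/2}$) and using the codimension of $\cM$ on the rest (contributing $\ll\eps^{m/2}$); then one transfers this $m_X$-bound back to an $m_H$-bound on $u\in V$ via the effective equidistribution \eqref{equX'}. The exponent $\kappa_3/2$ in the lemma thus originates from the fundamental-domain boundary estimate \eqref{Fest}, not from any transversality in the $u$-variable.
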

\begin{proof}
We first estimate the measure of the subset violating (1). If $\|\xi(x,u)\|\ge\eps^{-1}$, then $\textrm{ht}(a_sux)=\textrm{ht}(\xi(x,u)\Gamma)\ge C_6^{-\frac{1}{d-1}}\eps^{-\frac{1}{d-1}}$ by \eqref{iotaht}, hence $a_sux$ is not in $K(C_6^{\frac{1}{d-1}}\eps^{\frac{1}{d-1}})$. As we obtained the estimate \eqref{cuspest}, we have
\eqlabel{cuspest'}{m_H\left(\set{u\in V: a_sux\notin K(C_6^{\frac{1}{d-1}}\eps^{\frac{1}{d-1}})}\right)\asymp \eps^{\frac{d}{d-1}}}
if $\eps>e^{-\kappa_2 s}$, $\textrm{ht}(x)<e^{\kappa_2 s}$. Thus, $m_H(\set{u\in V: \|\xi(x,u)\|\ge\eps^{-1}})\ll \eps^{\frac{d}{d-1}}$.

Now we estimate the measure of the subset satisfying (1) but violating (2). If $u\in V$ satisfies (1), then $\|(\xi(x,u)^{tr})^{-1}\bm_0\|>C_3^{-1}\eps\|\bm_0\|$. Assume from now on $\|\bx(x,u)\|\leq \eps^2\|\bm_0\|$. Therefore $\|\by(x,u)\|\gg\eps\|\bm_0\|$. Moreover we can find some $h\in B^G(\operatorname{id},C_{12}\eps)$ so that $h(\xi(x,u)^{tr})^{-1}\bm_0=h\left(\begin{matrix} \bx(x,u)\\
\by(x,u)
\end{matrix}\right)\in \set{\mb{0}_m}\times\bR^{n}$, for some constant $C_{12}>1$.

Let $\cM:=\set{g\in G: (g^{tr})^{-1}\bm_0\in\set{\mb{0}_m}\times\bR^n}$, then $\xi(x,u)$ is in the $C_{12}\eps$-neighborhood of $\cM$. Denote by $\cC(\eps)\subset\cF$ the intersection of this neighborhood and $\cF$. It follows that $\xi(x,u)\Gamma\in\cC(\eps)\Gamma$. 

We now need to get an upper bound of $m_X(\cC(\eps)\Gamma)$. Note that the $\eps^{\frac{1}{2}}$-neighborhood of $\cF(\eps^{\frac{1}{2}}, C_4\eps^{\frac{1}{2d}})$ is still contained in $\cF$. Since $\cM$ is a $(d^2-m-1)$-dimensional analytic closed submanifold of $G$, we have $$m_G(\cC(\eps)\cap \cF(\eps^{\frac{1}{2}}, C_4\eps^{\frac{1}{2d}}))\ll (\eps^{\frac{1}{2}})^m m_G(\cC(\eps^{\frac{1}{2}})\cap \cF(\eps^{\frac{1}{2}}, C_4\eps^{\frac{1}{2d}}))\ll \eps^{\frac{m}{2}}.$$
On the other hand, by \eqref{Fest} we have $m_G(\cF\setminus \cF(\eps^{\frac{1}{2}}, C_4\eps^{\frac{1}{2d}})))\ll \eps^{\frac{\kappa_3}{2}}$. Therefore, we obtain $m_X(\cC(\eps)\Gamma)=m_G(\cC(\eps))\ll \eps^{\frac{m}{2}}+\eps^{\frac{\kappa_3}{2}}\ll \eps^{\frac{\kappa_3}{2}}$.
We remark that the implied constant is uniform on the choice of $\bm_0\in\bZ^d\setminus\set{0}$ since $\cM$ is determined by $\frac{\bm_0}{|\bm_0|}\in\bS^{d-1}$ and $\bS^{d-1}$ is compact. 

 On the other hand, if $u\in V$ satisfies (1) and violates (2), then $a_sux=\xi(x,u)\Gamma\in \cC(\eps)\Gamma\cap K(C_6^{\frac{1}{d-1}}\eps^{\frac{1}{d-1}})$. Let $f\in L^\infty(X)$ be the characteristic function on the set $\cC(\eps)\Gamma\cap K(C_6^{\frac{1}{d-1}}\eps^{\frac{1}{d-1}})$ and $\psi_{\frac{\eps}{10}}\in C^\infty_c(X)$ be a non-negative smooth approximating function on $B^G(\operatorname{id},\frac{\eps}{10})$ as defined in the proof of \ref{partition}. Applying \eqref{equX'} with $f*\psi_{\frac{\eps}{10}}$ as we obtained \eqref{ballest}, we have
\begin{align*}
    &m_H\left(\set{u\in V: a_sux\in \cC(\eps)\Gamma\cap K(C_6^{\frac{1}{d-1}}\eps^{\frac{1}{d-1}})}\right)\\&\qquad\qquad\qquad \asymp m_X\left(\cC(\eps)\Gamma\cap K(C_6^{\frac{1}{d-1}}\eps^{\frac{1}{d-1}})\right)\ll\eps^{\frac{\kappa_3}{2}}
\end{align*}
if $\eps>e^{-\kappa_2 s}$ and $\textrm{ht}(x)<e^{\kappa_2 s}$.
\end{proof}

The following density property of $\gamma(x,u)^{tr}$-orbit of $\bm_0\in\bZ^d$, where $x$ and $\bm_0$ are fixed, is a key ingredient to obtain an effective equidistribution on fiber tori.

\begin{prop}\label{key}
Let $s>0$ and $\bm_0\in\bZ^d\setminus\set{0}$. For any $e^{-\kappa_2 s}\leq\eps\leq\frac{1}{2}$ and $x\in X$ with $\textrm{ht}(x)<e^{\kappa_2 s}$, let $V_{x,\eps}$ be the set as in Lemma \ref{Vxe}. Then
\begin{enumerate}
    \item $\set{\gamma(x,u)^{tr}\bm_0\in \bZ^d: u\in V_{x,\eps}}\subseteq B^{\bZ^d}(0,R)$,\\
     where $R=C_{15}\eps^{-1}\textrm{ht}(x)^{d-1}\|\bm_0\|e^{ns}$,
    \item For any $\bm\in\bZ^d$,
    $$m_H(\set{u\in V_{x,\eps}: \gamma(x,u)^{tr}\bm_0=\bm})\leq C_{16}\eps^{-3n}e^{-dns}$$
\end{enumerate}
for some constants $C_{15}, C_{16}>1$.
\end{prop}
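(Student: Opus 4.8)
The plan is to unwind the definition $a_s u \iota(x) = \xi(x,u)\gamma(x,u)$ and transport everything to the level of the dual lattice action on $\bm_0$. Writing $g = \iota(x)$, we have $\gamma(x,u) = \xi(x,u)^{-1} a_s u g$, hence $\gamma(x,u)^{tr}\bm_0 = (ug)^{tr} a_s^{tr} (\xi(x,u)^{tr})^{-1}\bm_0 = (ug)^{tr} a_s (\xi(x,u)^{tr})^{-1}\bm_0$, using that $a_s$ is symmetric. By \eqref{defxy} this equals $(ug)^{tr} a_s \smallmat{\bx(x,u)\\ \by(x,u)}$. Since $a_s = \diag{e^{ns}\Id_m, e^{-ms}\Id_n}$, the vector $a_s(\xi(x,u)^{tr})^{-1}\bm_0$ has norm at most $e^{ns}\|(\xi(x,u)^{tr})^{-1}\bm_0\| \le e^{ns} C_3 \|\xi(x,u)\| \|\bm_0\| \le C_3 \eps^{-1} e^{ns}\|\bm_0\|$ on $V_{x,\eps}$ by condition (1) of Lemma \ref{Vxe} and \eqref{opnorm}. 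For the first claim, I then bound $\|(ug)^{tr}\|_{\mathrm{op}} \le C_3\|ug\| \le C_3 C_1 \|u\| \|g\| \ll \|\iota(x)\| \ll \textrm{ht}(x)^{d-1}$ using \eqref{norm}, \eqref{iotaht}, and that $\|u\|$ is bounded uniformly (since $u$ ranges over the fixed compact set $V$); combining gives $\|\gamma(x,u)^{tr}\bm_0\| \le R = C_{13}\eps^{-1}\textrm{ht}(x)^{d-1}\|\bm_0\| e^{ns}$, which is claim (1).

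For claim (2), the idea is that the map $u \mapsto \gamma(x,u)^{tr}\bm_0$ cannot take any fixed value $\bm$ on too large a subset of $V_{x,\eps}$, because the derivative of this map in the horospherical directions is nondegenerate, with a quantitative lower bound coming precisely from condition (2) of Lemma \ref{Vxe} (the lower bound $\|\bx(x,u)\| > \eps^2\|\bm_0\|$ on the ``expanding'' $\bR^m$-component). Concretely, I would parametrize $u = u(A)$ with $A \in \mathrm{Mat}_{m,n}(\bR)$ as in \eqref{horo2}. Fixing $x$ and writing $v(x,u) := a_s(\xi(x,u)^{tr})^{-1}\bm_0$, the obstruction is that $\xi$ and hence $v$ also depend on $u$; however $\xi(x,u)\Gamma = a_sux$, so $\xi$ is locally constant up to right multiplication by $\Gamma$ — more usefully, I will instead argue directly on $\gamma(x,u)^{tr}\bm_0 = (a_s u g)^{tr}(\xi(x,u)^{tr})^{-1}\bm_0$ and note that $(\xi(x,u)^{tr})^{-1}\bm_0 \in \Gamma^{tr} g^{tr} a_s^{-1}(\text{something})$... rather, the cleanest route: on the set where $\gamma(x,u)^{tr}\bm_0 = \bm$ is fixed, we have $\bm = (ug)^{tr} v(x,u)$, so $v(x,u) = ((ug)^{tr})^{-1}\bm$ is an explicit function of $u$, and then the equation becomes a constraint that pins down $\xi(x,u)^{tr}$ hence $a_sux$ to lie in a lower-dimensional subvariety; I would then invoke \eqref{equX'} together with a volume estimate for that subvariety exactly as in the proof of Lemma \ref{Vxe}. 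The scaling factors $\eps^{-3n}$ and $e^{-dns}$ should emerge from: (i) the $e^{-ms} \le e^{-ns}$... wait, from the contraction $e^{-ms}$ in the $\by$-block combined with $n$ of those directions giving $e^{-mns}$, adjusted — I will track the exponents so that the bound $C_{14}\eps^{-3n}e^{-dns}$ comes out, with the $\eps^{-3n}$ absorbing the three places $\eps$ enters (conditions (1), (2), and the cusp/volume estimate), and $e^{-dns}$ being the Jacobian of the relevant restriction of $a_s$ to the $n$-dimensional fiber.

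The main obstacle I anticipate is claim (2): making precise the ``well-separatedness'' of the orbit $\{\gamma(x,u)^{tr}\bm_0 : u \in V_{x,\eps}\}$ in $\bZ^d$. The subtlety is that $\gamma(x,u)$ is not a smooth function of $u$ globally — it jumps as $a_su\iota(x)$ crosses translates of $\partial\cF$ — so one cannot simply differentiate. The resolution will be to work on the level of $a_sux \in X$, where the dynamics \emph{is} smooth, and to observe that fixing $\gamma(x,u)^{tr}\bm_0 = \bm$ forces $\xi(x,u)$ (equivalently $a_sux$) into a neighborhood of an algebraic subvariety $\cM_\bm := \{g \in G : (g^{tr})^{-1}(\text{rescaling of }\bm) = (ug)^{tr}{}^{-1}\bm \text{ compatible}\}$ — more precisely the set where $(\xi^{tr})^{-1}\bm_0$ and $\bm$ are proportional in the appropriate twisted sense — of codimension $\ge n$ (using condition (2) to rule out the degenerate locus where the $\bx$-component vanishes), and then reuse the equidistribution-plus-volume argument from Lemma \ref{Vxe}, being careful that the implied constants remain uniform in $\bm_0/\|\bm_0\|$ by compactness of $\bS^{d-1}$. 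The bookkeeping of the $\eps$- and $e^{s}$-powers through this argument is the part requiring genuine care, but it is routine given the template already established in the proof of Lemma \ref{Vxe}.
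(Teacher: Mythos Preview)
Your argument for part (1) is correct and essentially identical to the paper's.

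For part (2), however, there is a genuine gap. You correctly write down the key identity $v(x,u) = ((ug)^{tr})^{-1}\bm$, which is exactly the right starting point, but you then take a wrong turn by proposing to translate this into a constraint on $a_sux \in X$ and apply equidistribution \eqref{equX'} as in Lemma~\ref{Vxe}. This will not work, for two reasons. First, the constraint $\gamma(x,u)^{tr}\bm_0=\bm$ does not correspond to membership of $a_sux$ in any fixed subset of $X$: your putative subvariety $\cM_\bm$ would itself depend on $u$, since $v(x,u)=((ug)^{tr})^{-1}\bm$ varies with $u$. Second, and more seriously, the bound you need is $e^{-dns}$, which is exponentially small in $s$; the equidistribution-plus-subvariety argument of Lemma~\ref{Vxe} yields only polynomial bounds like $\eps^{\kappa_3/2}$, and cannot reach this scale.

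The paper's route for (2) is a direct elementary volume computation in the $u$-variable, and it proceeds exactly from the identity you abandoned. Writing $u=\varphi(A)$ with $A\in\Mat_{m,n}(\bR)$ and splitting $(\iota(x)^{tr})^{-1}\bm = \smallmat{v_+\\v_-}$ into its $\bR^m$ and $\bR^n$ components (note these are determined by $\bm$ alone, \emph{independent of $u$}), one computes $(u^{tr})^{-1}(\iota(x)^{tr})^{-1}\bm = \smallmat{v_+\\ -A^{tr}v_+ + v_-}$, while the left side $a_s(\xi^{tr})^{-1}\bm_0$ equals $\smallmat{e^{ns}\bx(x,u)\\ e^{-ms}\by(x,u)}$. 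Matching the top blocks gives $v_+ = e^{ns}\bx(x,u)$, hence $\|v_+\|>\eps^2\|\bm_0\|e^{ns}$ by condition (2) of Lemma~\ref{Vxe}; matching the bottom blocks gives $\|A^{tr}v_+ - v_-\| = e^{-ms}\|\by(x,u)\| \le C_3\eps^{-1}\|\bm_0\|e^{-ms}$. Thus $A$ lies in an affine tube of thickness $\asymp \eps^{-3}e^{-(m+n)s}$ in $n$ independent directions, and the intersection of this tube with the bounded region $\varphi^{-1}(V)$ has Lebesgue measure $\ll \eps^{-3n}e^{-dns}$. No smoothness of $\gamma$, no equidistribution, and no subvariety analysis in $X$ is needed.
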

We remark that the estimate in this proposition is \textit{tight} in the following sense: If we consider $\eps$ and $\textrm{ht}(x)$ as constants, as the points $\gamma(x,u)^{tr}\bm_0$ belong to $B^{\bZ^d}(0,R)$ and the latter contains $\asymp R^d$ points, the estimate in (2) is compatible with each $\bm\in B^{\bZ^d}(0,R)$ obtaining the same weight $R^{-d}\asymp e^{-dns}$. In other words, roughly speaking, Proposition \ref{key} asserts that the orbit $\gamma(x,u)^{tr}\bm_0$ is well-distributed on $B^{\bZ^d}(0,R)$ if $e^{ns}$ is much bigger than $\eps^{-1}$ and $\textrm{ht}(x)$.
\begin{proof}
Recall that there is the canonical measure-preserving bijection $\varphi$ between $M_{m,n}(\bR)$ and $H$ defined by $\varphi(A)=\left(\begin{matrix} \Id_m & A\\
0 & \Id_n\end{matrix}\right)$. Since $V$ is a neighborhood of identity with compact closure, there exists a constant $C_{14}>1$ such that $\varphi(B^{\bR^{mn}}(0,C_{14}^{-1}))\subseteq V\subseteq \varphi(B^{\bR^{mn}}(0,C_{14}))$. Note also that $\|u\|\leq C_{14}$ for any $u\in V$.

(1) Recall that we have $\|\iota(x)\|\leq C_6\textrm{ht}(x)^{d-1}$, $\|\xi(x,u)\|\leq \eps^{-1}$, $\|u\|\leq C_{14}$, and $\|a_s\|_{\textrm{op}}=e^{ns}$. By the definition of $\xi$ and $\gamma$ we have a relation $\xi(x,u)^{-1}a_su\iota(x)=\gamma(x,u)$. Hence,
\begin{align*}
  \|\gamma(x,u)^{tr}\bm_0\|&=\|\iota(x)^{tr}u^{tr}a_s^{tr}(\xi(x,u)^{tr})^{-1}\bm_0\|  \\
  &=\|\iota(x)^{tr}\|_{\textrm{op}}\|u^{tr}\|_{\textrm{op}}\|a_s^{tr}\|_{\textrm{op}}\|(\xi(x,u)^{tr})^{-1}\|_{\textrm{op}}\|\bm_0\|\\
  &\leq C_3^3\|\iota(x)\|\|u\|\|a_s\|_{\textrm{op}}\|\xi(x,u)\|\|\bm_0\|\\
  &\leq C_3^3C_6C_{14}\eps^{-1}\textrm{ht}(x)^{d-1}\|\bm_0\|e^{ns}.
\end{align*}

(2) The equation $\gamma(x,u)^{tr}\bm_0=\bm$ can be written
\eqlabel{meqn}{a_s^{tr}(\xi(x,u)^{tr})^{-1}\bm_0=(u^{tr})^{-1}(\iota(x)^{tr})^{-1}\bm.} Let $v_{+}\in\bR^m$ and $v_{-}\in\bR^n$ be the vectors such that $\left(\begin{matrix} v_{+}\\
v_{-}\end{matrix}\right)=(\iota(x)^{tr})^{-1}\bm$. Here, $v_{+}$ and $v_{-}$ only depend on $\bm$, not on $u$.
For any $A\in \varphi^{-1}(V)$, by a straightforward computation we have 
\eqlabel{Aeqn}{
\begin{aligned}
    (u^{tr})^{-1}(\iota(x)^{tr})^{-1}\bm&=\left(\begin{matrix} v_{+}\\
-A^{tr}v_{+}+v_{-}\end{matrix}\right),
\end{aligned}} where $u=\varphi(A)$. On the other hand, the left hand side of \eqref{meqn} is equal to $\left(\begin{matrix} e^{ns}\bx(x,u)\\
e^{-ms}\by(x,u)\end{matrix}\right)$. Combining this with \eqref{Aeqn}, if $u=\varphi(A)$ is a solution of \eqref{meqn}, then
\eqlabel{strip}{
\begin{aligned}
    \|A^{tr}v_{+}-v_{-}\|\leq e^{-ms}\|\by(x,u)\|.
\end{aligned}
}
If $u\in V_{x,\eps}$, we have $\|\bx(x,u)\|> \eps^{2}\|\bm_0\|$ and 
$$\|\by(x,u)\|\leq \|(\xi(x,u)^{tr})^{-1}\bm_0\|\leq C_3\|\xi(x,u)\|\|\bm_0\|\leq C_3\eps^{-1}\|\bm_0\|$$ by \eqref{defxy} and the definition of $V_{x,\eps}$ in Lemma \ref{Vxe}. Moreover, $\|v_{+}\|>\eps^{2}\|\bm_0\|e^{ns}$ since $v_{+}=e^{ns}\bx(x,u)$. To sum up, a solution $u=\varphi(A)$ must be in a thin tube $\set{A\in M_{m,n}(\bR): \|A^{tr}v_{+}-v_{-}\|\leq C_3\eps^{-1}\|\bm_0\|e^{-ms}}$, where $\|v_{+}\|>\eps^{2}\|\bm_0\|e^{ns}$. We can consider $A$ as in the Euclidean space $\bR^{mn}$, then the volume of the intersection of this tube and the ball $B^{\bR^{mn}}(0,C_{14})$ is \\$\ll \left(C_{14}^{-1}\|v_{+}\|^{-1}(C_3\eps^{-1}\|\bm_0\|e^{-ms})\right)^n\ll \eps^{-3n}e^{-dns}$. Therefore, for any $\bm\in\bZ^{d}$, the $m_H$-measure of the set of $u\in V_{x,\eps}$ satisfying $\gamma_x(u)^{tr}\bm_0=\bm$ is less than $C_{16}\eps^{-3n}e^{-dns}$ for some constant $C_{16}>1$.
\end{proof}

\subsection{A relation between Fourier coefficients of measures}
In this subsection, we explore a relation between $\mu_{y_0,t}$ and $\mu_{y_0,t-s}$, which are two probability measures of expanding translates of $y_0$, and their Fourier coefficients via Proposition \ref{inductive}, where $y_0\in Y$ and $0<s<t$. 

For each $\widetilde{h}=h_+h_0\in \widetilde{H}$ with $h_+\in H$ and $h_0\in H^0$, we define a smooth diffeomorphism $\Psi_{\widetilde{h}}$ of $H$ onto itself by $$\Psi_{\widetilde{h}}(u)=\operatorname{Ad}(h_0)(u)h_+^{-1}=h_0uh_0^{-1}h_+^{-1}.$$ In particular, $u\widetilde{h}^{-1}=h_0^{-1}\Psi_{\widetilde{h}}(u)$. 

Let $c_{1}:=\inf\set{\bd^G(\operatorname{id},\gamma): \gamma\in \Gamma\setminus\set{\operatorname{id}}}$. From the discreteness of $\Gamma$, we have $c_{1}>0$. We start with the following lemma.
\begin{lem}\label{stab}
There exists a constant $C_{17}>1$ such that the following holds. Let $s>0$, $0<r<c_{2}$, $0<\eps<r^2$ and $\mathbf{r}=(r\eps,r\eps,r\eps e^{-ds})$ for some constant $0<c_{2}<(2C_9^4C_{17})^{-1}$. For $x\in X$ and $u\in V$, suppose that $\iota(x)\in\cF(C_9^4r\eps,C_4\eps^{\frac{1}{100d^2}})$ and $\xi(x,u)\in\cF(r,r^{\frac{1}{d-1}})$. If $x'\in \mathsf{B}_{C_9^3\mathbf{r}}(x)$, then there uniquely exist $\widetilde{h}\in B^{H}(\operatorname{id},C_9^3r\eps)B^{H^0}(\operatorname{id},C_9^3r\eps)$ and $h_-\in B^{H^-}(\operatorname{id},C_9^3r\eps e^{-ds})$ such that $x'=\widetilde{h}h_-x$, $\bd^X(a_sux,a_s\Psi_{\widetilde{h}}(u)x')\leq C_{17}r\eps$ and $\gamma(x,u)=\gamma(x',\Psi_{\widetilde{h}}(u))$.
\end{lem}
\begin{proof}
Recall that
\begin{align*}
    &a_su\iota(x)=\xi(x,u)\gamma(x,u),\\
    &a_s\Psi_{\widetilde{h}}(u)\iota(x')=\xi(x',\Psi_{\widetilde{h}}(u))\gamma(x',\Psi_{\widetilde{h}}(u)).
\end{align*}
Since $\iota(x)$ is in $\cF(C_9^4r\eps,C_4\eps^{\frac{1}{100d^2}})$, $\iota$ is an isometry on $\sB_{C_9^3\mathbf{r}}(x)$, so $\iota(x')$ is in $\sB_{C_9^3\mathbf{r}}\iota(x)$. It follows that there uniquely exist 
$$h_+\in B^{H}(\operatorname{id},C_9^3r\eps),\; h_0\in B^{H^0}(\operatorname{id},C_9^3r\eps), \; h_-\in B^{H^-}(\operatorname{id},C_9^3r\eps e^{-ds})$$ such that $\iota(x')=\widetilde{h}h_-\iota(x)$, where $\widetilde{h}=h_+h_0$. Observe that
\begin{align*}a_su\iota(x)&=a_suh_-^{-1}\widetilde{h}^{-1}\iota(x')=(a_suh_-^{-1}u^{-1}a_{-s})a_su\widetilde{h}^{-1}\iota(x')\\&=(a_suh_-^{-1}u^{-1}a_{-s})a_sh_0^{-1}\Psi_{\widetilde{h}}(u)\iota(x')\\&=(a_suh_-^{-1}u^{-1}a_{-s})h_0^{-1}a_s\Psi_{\widetilde{h}}(u)\iota(x').\end{align*}
Note that $\bd^G(\operatorname{id},h_0^{-1})<C_9^3r\eps$ and
\begin{align*}
    \bd^G(\operatorname{id},a_suh_-^{-1}u^{-1}a_{-s})&\asymp \|\operatorname{Ad}_{a_su}(-\log h_-)\|_{\mathfrak{g}} \\&\ll e^{ds}\|\operatorname{Ad}_{u}(-\log h_-)\|_{\mathfrak{g}}\ll e^{ds}\bd^G(\operatorname{id},h_-)\ll r\eps.
\end{align*} Hence, for some constant $C_{17}>1$ we have
\eqlabel{xidist'}{\bd^G(a_su\iota(x),a_s\Psi_{\widetilde{h}}(u)\iota(x'))\leq C_{17}r\eps<r.}

Since $a_su\iota(x)=\xi(x,u)\gamma(x,u)$ and $\xi(x,u)\in\cF(r,r^{\frac{1}{d-1}})$, \eqlabel{xidist''}{\begin{aligned}
    \bd^G(a_su\iota(x),a_s\Psi_{\widetilde{h}}(u)\iota(x'))&\ge \bd^X(\xi(x,u)\Gamma, \xi(x',\Psi_{\widetilde{h}}(u))\Gamma)\\&=\bd^G(\xi(x,u),\xi(x',\Psi_{\widetilde{h}}(u))).
\end{aligned}}
Combining \eqref{xidist'} and \eqref{xidist''}, we get 
\eqlabel{xidist'''}{\bd^G(\xi(x,u),\xi(x',\Psi_{\widetilde{h}}(u)))\leq C_{17}r\eps.}
Moreover, $\bd^X(a_sux,a_s\Psi_{\widetilde{h}}(u)x')\leq C_{17}r\eps$ from \eqref{xidist'}, and
\eqlabel{xidist4}{
\begin{aligned}
\bd^G&(\gamma(x,u),\gamma(x',\Psi_{\widetilde{h}}(u)))=\bd^G(\xi(x,u)^{-1}a_su\iota(x),\xi(x',\Psi_{\widetilde{h}}(u))^{-1}a_s\Psi_{\widetilde{h}}(u)\iota(x'))\\
&\leq \bd^G(\xi(x,u)^{-1}a_su\iota(x),\xi(x,u)^{-1}a_s\Psi_{\widetilde{h}}(u)\iota(x'))\\
&\qquad\qquad+\bd^G(\xi(x,u)^{-1}a_s\Psi_{\widetilde{h}}(u)\iota(x'),\xi(x',\Psi_{\widetilde{h}}(u))^{-1}a_s\Psi_{\widetilde{h}}(u)\iota(x'))\\&
\leq C_2\|\xi(x,u)\|^2\bd^G(a_su\iota(x),a_s\Psi_{\widetilde{h}}(u)\iota(x'))+\bd^G(\xi(x,u)^{-1},\xi(x',\Psi_{\widetilde{h}}(u))^{-1})\\&\leq C_2\|\xi(x,u)\|^2\big(\bd^G(a_su\iota(x),a_s\Psi_{\widetilde{h}}(u)\iota(x'))+\bd^G(\xi(x,u),\xi(x',\Psi_{\widetilde{h}}(u))\big)
\end{aligned}}
using \eqref{leftmetric} and the right-invariance of $\bd^G$. From \eqref{iotaht} and $\xi(x,u)\in\cF(r,r^{\frac{1}{d-1}})$ we have $\|\xi(x,u)\|\leq C_6r^{-1}$. By \eqref{xidist'} and \eqref{xidist'''}, the last line of \eqref{xidist4} is bounded above by $2C_2C_6^2C_{17}r^{-1}\eps< 2C_2C_6^2C_{17}r$. By choosing $c_{2}:=\min\set{(2C_9^4)^{-1}, \frac{c_{1}}{2C_2C_6^2C_{17}}}$, we obtain $\bd^G(\gamma(x,u),\gamma(x',u))<c_{1}$ and deduce $\gamma(x,u)=\gamma(x',u)$.
\end{proof}

Let $t>0$, $0<s\leq\kappa_4 t$, $e^{-\kappa_4 s}\leq \eps<c_{2}^{\frac{1}{\kappa_4}}$, and $\eps^{\kappa_4}<r\leq c_{2}$, where $\kappa_4:=\frac{\min\set{\kappa_2,\kappa_3}}{2000d^3}$, so that $r\eps e^{-ds}>e^{-\frac{\kappa_2}{2}t}$. In the rest of this paper, we fix $y_0\in Y$ with $\textrm{ht}(\pi(y_0))<e^{\frac{\kappa_2}{2}t}$ and denote $\mu_t:=\mu_{y_0,t}$ and $\mu_{t-s}:=\mu_{y_0,t-s}$ for simplicity. We also fix a partition of unity $\set{\psi_j}_{j\in\cJ}$ using Proposition \ref{partition} with $\mathbf{r}=(r\eps,r\eps,r\eps e^{-ds})$, where $\cJ=\set{1,\cdots,N_{\mathbf{r}}}\cup\set{\infty}$ and $N_{\mathbf{r}}\asymp (r\eps)^{-(d^2-1)}e^{dmns}$. Let $\set{x_1,\cdots,x_{N_{\mathbf{r}}}}$ be the corresponding $\mathsf{B}_{C_9\mathbf{r}}$-separated set as in Proposition \ref{partition}.

We also denote $\mathbf{r}'=(r\eps e^{-ds},r\eps,r\eps)$. Since $\operatorname{Ad}_{a_{-s}}$ expands $H^{-}$ with rate $e^{ds}$ and contracts $H$ with rate $e^{-ds}$, we have
\eqlabel{boxconversion}{a_{-s}\mathsf{B}_{\mathbf{r}}a_{s}=\mathsf{B}_{\mathbf{r}'}.}

Recall that we are denoting $\sB_r=\sB_{r,r,r}$. For $z\in X$ with $\textrm{ht}(z)<e^{\frac{\kappa_2}{2} t}$ and $\om\in C_c^{\infty}(X)$ such that $\mathds{1}_{\sB_r(z)}\leq\om\leq\mathds{1}_{\sB_{C_9^3r}(z)}$ and $\|\triangledown\om\|_{L^{\infty}(X)}\leq C_{11}r^{-1}$, let $\mu_{t,\om}$ be the probability measure on $Y$
defined by
\eqlabel{omdef}{\mu_{t,\om}(f):=\pi_*\mu_t(\om)^{-1}\int_Y \om\circ\pi(y)f(y)d\mu_t(y).}
Note that $\pi_*\mu_t(\sB_r(z))\leq\pi_*\mu_t(\om)\leq\pi_*\mu_t(\sB_{C_9^3r}(z))$ and $\kappa_2>0$ was taken to be sufficiently small so that we can apply \eqref{ballest}, hence we have
\eqlabel{ommsr}{C_{18}^{-1}r^{d^2-1}\leq\pi_*\mu_t(\om)\leq C_{18}r^{d^2-1}}
for some $C_{18}>1$. For $1\leq j\leq N_{\mathbf{r}}$, let $\mu_{t-s,j}$ be the probability measure on $Y$ defined by
\eqlabel{jdef}{\mu_{t-s,j}(f):=\alpha_j^{-1}\int_Y \psi_j\circ\pi(y)f(y)d\mu_{t-s}(y),}
where $\alpha_j:=\mu_{t-s}(\psi_j\circ\pi)$. Since $r\eps e^{-ds}>e^{-\frac{\kappa_2}{2}t}>e^{-\kappa_2(t-s)}$ we can also apply \eqref{boxest}. Hence, we have 
\eqlabel{jmsr}{C_{19}^{-1}(r\eps)^{d^2-1}e^{-dmns}\leq\alpha_j\leq C_{19}(r\eps)^{d^2-1}e^{-dmns},\quad \forall 1\leq j\leq N_{\mathbf{r}}}
for some $C_{19}>1$.

Taking push-forward of these measures under $\sigma: Y\to\bT^d$, we define probability measures on the torus as follows:
\eqlabel{nudef}{
\begin{aligned}
    &\nu_t:=\sigma_*\mu_t, \quad \nu_{t,\om}:=\sigma_*\mu_{t,\om}\\
    &\nu_{t-s}:=\sigma_*\mu_{t-s}, \quad \nu_{t-s,j}:=\sigma_*\mu_{t-s,j}, \quad 1\leq j\leq N_{\mathbf{r}}.
\end{aligned}}
We now observe the relation between $\widehat{\nu_{t,\om}}$ and $\widehat{\nu_{t-s,j}}$'s. The following proposition is the main purpose of this section. 

\begin{prop}\label{mainprop}
Let $t>0$, $0<s\leq\kappa_4 t$, $e^{-\kappa_4 s}\leq \eps<c_{2}^{\frac{1}{\kappa_4}}$, $\eps^{\kappa_4}<r\leq c_{2}$ and $\bm_0\in\bZ^d\setminus\set{0}$. For $\iota(z)\in \cF(C_9^4r,2C_{10}r^{\frac{1}{d}})$, let $\om\in C^\infty_c(X)$ be a function satisfying $\mathds{1}_{\sB_r(z)}\leq\om\leq\mathds{1}_{\sB_{C_9^3r}(z)}$ and $\|\triangledown\om\|_{L^{\infty}(X)}\leq C_{11}r^{-1}$ as above. For some constants $c_{3}, c_{5}>0$, if $|\widehat{\nu_{t,\om}}(\bm_0)|\ge c_{3}r^{-(d^2-1)}\eps^{\frac{\kappa_3}{2}}$, then there exists $\cJ'\subseteq\cJ\setminus\set{\infty}$ with $|\cJ'|\ge\eta|\cJ|$ such that for any $j\in\cJ'$ we have $\iota(x_j),\iota(a_{-s}x_j)\in \cF(C_9^4r\eps,C_4\eps^{\frac{1}{100d^2}})$ and
$$\left|\set{\bm\in B^{\bZ^d}(R): |\widehat{\nu}_{t-s,j}(\bm)|>\eta}\right|>\eta|B^{\bZ^d}(0,R)|,$$
where $\eta:=c_{5}\eps^{d^2+3n+\frac{\kappa_3}{2}}\|\bm_0\|^{-d}$ and $R=C_{15}\eps^{-d}\|\bm_0\|e^{ns}$.
\end{prop}

\begin{proof}
From the definitions \eqref{omdef} and \eqref{nudef},
\eqlabel{est1}{\begin{aligned}
    \widehat{\nu_{t,\om}}(\bm_0)&=\int_{\bT^d}e^{-2\pi i\bm_0\cdot b}d\nu_{t,\om}(b)\\
    &=\int_Y e^{-2\pi i \bm_0\cdot \sigma(y)}d\mu_{t,\om}(y)\\
    &=\pi_*\mu_t(\om)^{-1}\int_Y \om\circ\pi(y)e^{-2\pi i\bm_0\cdot\sigma(y)}d\mu_t(y).
\end{aligned}}
Recall that for any $f\in L^{\infty}(Y)$,
$$\mu_t(f)=\frac{1}{m_H(V)}\int_V (a_su)_*\mu_{t-s}(f)dm_H(u)+O(\|f\|_{L^\infty(Y)}e^{-(m+n)(t-s)})$$
from Proposition \ref{inductive}. Applying this to \eqref{est1},
\eqlabel{est2}{\begin{aligned}
    &\widehat{\nu_{t,\om}}(\bm_0)=\frac{\pi_*\mu_t(\om)^{-1}}{m_H(V)}\int_V\int_Y \om(a_su\pi(y))e^{-2\pi i\bm_0\cdot \sigma(a_suy)}d\mu_{t-s}(y)dm_H(u)\\&\qquad\qquad+O(\pi_*\mu_t(\om)^{-1}e^{-(m+n)(t-s)})\\
    &=\frac{\pi_*\mu_t(\om)^{-1}}{m_H(V)}\displaystyle\sum_{j\in\cJ}\int_V\int_Y \om(a_su\pi(y))\psi_j(\pi(y))e^{-2\pi i\bm_0\cdot \sigma(a_suy)}d\mu_{t-s}(y)dm_H(u)\\&\qquad\qquad+O(\pi_*\mu_t(\om)^{-1}e^{-(m+n)(t-s)}).
\end{aligned}}
Let $\cJ_{int}\subset\cJ$ be the set of $j\in\cJ$ such that $\iota(x_j),\iota(a_{-s}x_j)\in \cF(C_9^4r\eps,C_4\eps^{\frac{1}{100d^2}})$. By $\eqref{Fest}$ we have $m_G(\cF\setminus\cF(C_9^5r\eps,\frac{1}{2}C_4\eps^{\frac{1}{100d^2}}))\ll (r\eps)^{\kappa_3}$, since $\kappa_3=\frac{1}{100d^3}$. For any $j\in\cJ\setminus \cJ_{int}$, either $\iota(\mathsf{B}_{\mathbf{r}}x_j)$ or $\iota(\mathsf{B}_{\mathbf{r}'}a_{-s}x_j)$ is in $\cF\setminus\cF(C_9^5r\eps,\frac{1}{2}C_4\eps^{\frac{1}{10d^2}})$, and $m_G(\iota(\mathsf{B}_{\mathbf{r}}x_j))=m_G(\iota(\mathsf{B}_{\mathbf{r}'}a_{-s}x_j))\asymp N_{\mathbf{r}}^{-1}$. Note that $\iota(\mathsf{B}_{\mathbf{r}}x_j)$'s are disjoint, hence using \eqref{boxconversion} $\iota(\mathsf{B}_{\mathbf{r}'}a_{-s}x_j)=\iota(a_{-s}\mathsf{B}_{\mathbf{r}}x_j)$'s are also disjoint. Thus, $|\cJ\setminus\cJ_{int}|\ll (r\eps)^{\kappa_3}N_{\mathbf{r}}$. It follows that
\eqlabel{est3}{\begin{aligned}
    &\frac{\pi_*\mu_t(\om)^{-1}}{m_H(V)}\left|\displaystyle\sum_{j\in\cJ\setminus\cJ_{int}}\int_V\int_Y \om(a_su\pi(y))\psi_j(\pi(y))e^{-2\pi i\bm_0\cdot \sigma(a_suy)}d\mu_{t-s}(y)dm_H(u)\right|\\
    &\leq\frac{\pi_*\mu_t(\om)^{-1}}{m_H(V)}\displaystyle\sum_{j\in\cJ\setminus\cJ_{int}}\int_V\int_Y \om(a_su\pi(y))\psi_j(\pi(y))d\mu_{t-s}(y)dm_H(u)\\
    &\leq \pi_*\mu_t(\om)^{-1}\displaystyle\sum_{j\in\cJ\setminus\cJ_{int}}\pi_*\mu_{t-s}(\psi_j)\\
    &\asymp \pi_*\mu_t(\om)^{-1}|\cJ\setminus\cJ_{int}|N_{\mathbf{r}}^{-1}=O( \pi_*\mu_t(\om)^{-1}(r\eps)^{\kappa_3}).
\end{aligned}}
Here, we are using \eqref{jmsr} from the second last line to the last line. 

We now estimate the summation of \eqref{est2} over $j\in\cJ_{int}$. For each $j\in\cJ_{int}$ and $y$ as in the integral with respect to $\mu_{t-s}$, we may assume $a_su\pi(y)\in\supp\om$ and $\pi(y)\in \supp\psi_j$. It implies that $\xi(\pi(y),u)\in\supp\om\subseteq\cF(r,r^{\frac{1}{d-1}})$ and $\pi(y)\in\mathsf{B}_{C_9^3\mathbf{r}}(x_j)$, so the assumptions in Lemma \ref{stab} are satisfied for $x_j$ and $\pi(y)\in\supp\psi_j$. By Lemma \ref{stab}, for any $y\in\supp(\psi_j\circ\pi)$ there uniquely exist $\widetilde{h}(y)\in B^{H^0}(\operatorname{id},C_9^3r\eps)B^{H}(\operatorname{id},C_9^3r\eps)$ and $h_-(y)\in B^{H^-}(\operatorname{id},C_9^3r\eps e^{-ds})$ such that $\pi(y)=\widetilde{h}(y)h_-(y)x$,
\eqlabel{xidist}{\bd^X(a_sux_j,a_s\Psi_{\widetilde{h}(y)}(u)\pi(y))\leq C_{17}r\eps,}
\eqlabel{gammastab}{\gamma(x_j,u)=\gamma(\pi(y),\Psi_{\widetilde{h}(y)}(u))}
for any $u\in V$ and $j\in\cJ_{int}$. Since we are assuming $\|\triangledown\om\|_{L^{\infty}}\leq C_{11}r^{-1}$, by \eqref{xidist} we have
\eqlabel{omapprox}{|\om(a_sux_j)-\om(a_s\Psi_{\widetilde{h}(y)}(u)\pi(y))|\leq C_{11}C_{17}\eps.}
By Lemma \ref{siggam} and \eqref{gammastab}, we also have
\eqlabel{sigma}{\sigma(a_s\Psi_{\widetilde{h}(y)}(u)y)=\gamma(\pi(y),\Psi_{\widetilde{h}(y)}(u))\sigma(y)=\gamma(x_j,u)\sigma(y).}

Recall that $\Psi_{\widetilde{h}(y)}$ is a diffeomorphism on $H$ with \eqlabel{automorphismestimate}{m_H(\Psi_{\widetilde{h}(y)}^{-1}(V)\triangle V)\ll \|\Psi_{\widetilde{h}(y)}^{-1}-\operatorname{Id}_H\|_{\operatorname{op}}\ll \|\widetilde{h}(y)\|\ll r\eps,}
where $\|\Psi_{\widetilde{h}(y)}^{-1}-\operatorname{Id}_H\|_{\operatorname{op}}$ denotes the operator norm of $H\ni h\mapsto \Psi_{\widetilde{h}(y)}^{-1}(h)-h$.
Note that the Jacobian determinant of the diffeomorphism $u\mapsto \Psi_{\widetilde{h}(y)}(u)$ is a constant $D(y)$ independent of $u$. Indeed, the constant is explicitly $D(y)=\operatorname{det}(Z_1)^d$ where $\widetilde{h}(y)=h_+h_0$ and $h_0=\left(\begin{matrix}
    Z_1 & \\ & Z_2
\end{matrix}\right)\in H^0$. Moreover, $|D(y)-1|\ll |\operatorname{det}(Z_1)-1|\ll \bd^{\widetilde{H}}(\operatorname{id},\widetilde{h}(y))\ll r\eps$ for any $y\in \supp(\psi_j\circ \pi)$. Using \eqref{automorphismestimate} and change of variables we have
\eqlabel{changeofvariable}{\begin{aligned}
    &\displaystyle\sum_{j\in\cJ_{int}}\int_V \om(a_su\pi(y))\psi_j(\pi(y))e^{-2\pi i\bm_0\cdot \sigma(a_suy)}dm_H(u)\\&=\displaystyle\sum_{j\in\cJ_{int}}\int_{\Psi_{\widetilde{h}(y)}^{-1}(V)}\om(a_s\Psi_{\widetilde{h}(y)}(u)\pi(y))\psi_j(\pi(y))e^{-2\pi i\bm_0\cdot \sigma(a_s\Psi_{\widetilde{h}(y)}(u)y)}dm_H(u)\\&\qquad\qquad+O(|D(y)-1|)\\&=\displaystyle\sum_{j\in\cJ_{int}}\int_{V}\om(a_s\Psi_{\widetilde{h}(y)}(u)\pi(y))\psi_j(\pi(y))e^{-2\pi i\bm_0\cdot \sigma(a_s\Psi_{\widetilde{h}(y)}(u)y)}dm_H(u)+O(r\eps).
\end{aligned}}
It follows from \eqref{omapprox}, \eqref{sigma}, and \eqref{changeofvariable} that
\eqlabel{est4}{\begin{aligned}
    &\frac{\pi_*\mu_t(\om)^{-1}}{m_H(V)}\displaystyle\sum_{j\in\cJ_{int}}\int_V\int_Y \om(a_su\pi(y))\psi_j(\pi(y))e^{-2\pi i\bm_0\cdot \sigma(a_suy)}d\mu_{t-s}(y)dm_H(u)\\
    &=\frac{\pi_*\mu_t(\om)^{-1}}{m_H(V)}\displaystyle\sum_{j\in\cJ_{int}}\int_V\int_Y \om(a_sux_j)\psi_j(\pi(y))e^{-2\pi i\gamma(x_j,u)^{tr}\bm_0\cdot\sigma(y)}d\mu_{t-s}(y)dm_H(u)\\&\qquad\qquad+O(\pi_*\mu_t(\om)^{-1}r\eps)+O(\pi_*\mu_t(\om)^{-1}\eps)\\
    &=\frac{\pi_*\mu_t(\om)^{-1}}{m_H(V)}\displaystyle\sum_{j\in\cJ_{int}}\int_V\int_Y \om(a_sux_j)\psi_j(\pi(y))e^{-2\pi i\gamma(x_j,u)^{tr}\bm_0\cdot\sigma(y)}d\mu_{t-s}(y)dm_H(u)\\&\qquad\qquad+O(\pi_*\mu_t(\om)^{-1}\eps).
\end{aligned}}
 For $j\in\cJ_{int}$ and $\bm\in\bZ^d$, let $V_{x_j,\eps}$ be the set as in Lemma \ref{Vxe} and $E_{j,\bm}\subseteq V$ be the set $\set{u\in V_{x_j,\eps}: \gamma(x_j,u)^{tr}\bm_0=\bm}$ which was the set of (2) in Proposition \ref{key}. For each $j\in\cJ_{int}$, $\textrm{ht}(x_j)\leq\eps^{-1}$ by the definition of $\cJ_{int}$, so we have a disjoint partition $V_{x_j,\eps}=\displaystyle\bigcup_{\bm\in B^{\bZ^d}(0,R)}E_{j,\bm}$ by Proposition \ref{key}, where $R=C_{15}\eps^{-d}\|\bm_0\|e^{ns}$. By Lemma \ref{Vxe} and Proposition \ref{key}, we have the following measure estimates:
 \eqlabel{Vepsest}{m_H(V\setminus V_{x_j,\eps})\leq C_{13}\eps^{\frac{\kappa_3}{2}},}
 \eqlabel{Eest}{m_H(E_{j,\bm})\leq C_{16}\eps^{-3n}e^{-dns}}
for any $j\in\cJ_{int}$ and $\bm\in\bZ^d$. Denoting \eqlabel{betadef}{\beta_{j,\bm}:=\frac{1}{m_H(V)}\int_{E_{j,\bm}}\om(a_sux_j)dm_H(u),} 
each summand of the last line in \eqref{est4} can be expressed by
\eqlabel{est5}{\begin{aligned}
    &\int_V\int_Y \om(a_sux_j)\psi_j(\pi(y))e^{-2\pi i\gamma(x_j,u)^{tr}\bm_0\cdot\sigma(y)}d\mu_{t-s}(y)dm_H(u)\\
    &=\displaystyle\sum_{\bm\in B^{\bZ^d}(0,R)}\int_{E_{j,\bm}}\int_Y\om(a_sux_j)\psi_j(\pi(y))e^{-2\pi i\gamma(x_j,u)^{tr}\bm_0\cdot\sigma(y)}d\mu_{t-s}(y)dm_H(u)\\
    &\qquad\qquad +O\left(\int_{V\setminus V_{x_j,\eps}}m_X(\psi_j)dm_H(u)\right)\\
    &=\displaystyle\sum_{\bm\in B^{\bZ^d}(0,R)}\left(\int_{E_{j,\bm}}\om(a_sux_j)dm_H(u)\right)\left(\int_Y\psi_j(\pi(y))e^{-2\pi i\bm\cdot\sigma(y)}d\mu_{t-s}(y)\right)\\
    &\qquad\qquad +O(\eps^{\frac{\kappa_3}{2}} m_X(\psi_j))\\
    &=m_H(V)\left(\displaystyle\sum_{\bm\in B^{\bZ^d}(0,R)}\alpha_j\beta_{j,\bm}\widehat{\nu_{t-s,j}}(\bm)+O(\eps^{\frac{\kappa_3}{2}} m_X(\psi_j))\right)
\end{aligned}}
for any $j\in\cJ_{int}$. Combining \eqref{est2},\eqref{est3}, \eqref{est4} and \eqref{est5} all together,
\eqlabel{est6}{\begin{aligned}
    \pi_*\mu_t(\om)\widehat{\nu_{t,\om}}(\bm_0)&=\displaystyle\sum_{j\in\cJ_{int}}\displaystyle\sum_{\bm\in B^{\bZ^d}(0,R)}\alpha_j\beta_{j,\bm}\widehat{\nu_{t-s,j}}(\bm)\\
    &\qquad+O(\max(\eps^{\frac{\kappa_3}{2}},\eps,e^{-d(t-s))}))\\
    &=\displaystyle\sum_{j\in\cJ_{int}}\displaystyle\sum_{\bm\in B^{\bZ^d}(0,R)}\alpha_j\beta_{j,\bm}\widehat{\nu_{t-s,j}}(\bm)+O(\eps^{\frac{\kappa_3}{2}})
\end{aligned}}
if $s\leq\kappa_4 t$, $\eps\ge e^{-\kappa_4 s}$, and $r>\eps^{\kappa_4}$. Recall that $\pi_*\mu_t(\om)\asymp r^{d^2-1}$ from \eqref{ommsr}, hence we can find constants $c_{3}, c_{4}>0$ so that if $|\widehat{\nu_{t,\om}}(\bm_0)|\ge c_{3}r^{-(d^2-1)}\eps^{\frac{\kappa_3}{2}}$, then
\eqlabel{est7}{\left|\displaystyle\sum_{j\in\cJ_{int}}\displaystyle\sum_{\bm\in B^{\bZ^d}(0,R)}\alpha_j\beta_{j,\bm}\widehat{\nu_{t-s,j}}(\bm)\right|>c_{4}r^{(d^2-1)}|\widehat{\nu_{t,\om}}(\bm_0)|\ge c_{3}c_{4}\eps^{\frac{\kappa_3}{2}}.}

By \eqref{jmsr}, \eqref{Eest} and \eqref{betadef}, we have
\eqlabel{acond}{\alpha_j\beta_{j,\bm}\leq \alpha_j(C_{16}\eps^{-3n}e^{-dns})\leq \frac{C_{20}\eps^{-(d^2+3n)}\|\bm_0\|^{d}}{|B^{\bZ^d}(0,R)||\cJ_{int}|}}
for some constant $C_{20}>1$ since $|B^{\bZ^d}(0,R)|\asymp R^d$ and $|\cJ_{int}|\asymp N_{\mathbf{r}}$. Set
$$\cJ':=\set{j\in \cJ_{int}: \left|\displaystyle\sum_{\bm\in B^{\bZ^d}(0,R)}\alpha_j\beta_{j,\bm}\widehat{\nu_{t-s,j}}(\bm)\right|> \frac{c_{3}c_{4}\eps^{\frac{\kappa_3}{2}}}{2|\cJ_{int}|}}.$$
By definition of $\cJ'$ we have
$$ \left|\displaystyle\sum_{j\in\cJ_{int}\setminus \cJ'}\displaystyle\sum_{\bm\in B^{\bZ^d}(0,R)}\alpha_j\beta_{j,\bm}\widehat{\nu_{t-s,j}}(\bm)\right|<\frac{1}{2}c_{3}c_{4}\eps^{\frac{\kappa_3}{2}},$$
hence
\eqlabel{pigeonhole1}{\left|\displaystyle\sum_{j\in \cJ'}\displaystyle\sum_{\bm\in B^{\bZ^d}(0,R)}\alpha_j\beta_{j,\bm}\widehat{\nu_{t-s,j}}(\bm)\right|>\frac{1}{2}c_{3}c_{4}\eps^{\frac{\kappa_3}{2}}.}
On the other hand, \eqref{acond} implies that
\eqlabel{pigeonhole2}{\left|\displaystyle\sum_{\bm\in B^{\bZ^d}(0,R)}\alpha_j\beta_{j,\bm}\widehat{\nu_{t-s,j}}(\bm)\right|\leq\frac{C_{20}\eps^{-(d^2+3n)}\|\bm_0\|^{d}}{|\cJ_{int}|}}
for any $j\in\cJ_{int}$. Combining \eqref{pigeonhole1} and \eqref{pigeonhole2} we obtain $|\cJ'|\geq 2\eta|\cJ_{int}|$, where $c_{5}:=\frac{c_{3}c_{4}}{4C_{20}}$ and $\eta=c_{5}\eps^{d^2+3n+\frac{\kappa_3}{2}}\|\bm_0\|^{-d}$. Since $|\cJ_{int}|>\frac{|\cJ|}{2}$, it holds that $|\cJ'|\geq \eta|\cJ|$.

For $j\in \cJ'$ we define
$$I(j):=\set{\bm\in B^{\bZ^d}(0,R): \left|\widehat{\nu_{t-s,j}}(\bm)\right|> \eta}.$$
Then for any $j\in \cJ'$ we have 
$$\left|\displaystyle\sum_{\bm\in B^{\bZ^d}(0,R)\setminus I(j)}\alpha_j\beta_{j,\bm}\widehat{\nu_{t-s,j}}(\bm)\right|<\frac{c_{3}c_{4}\eps^{\frac{\kappa_3}{2}}}{4|\cJ_{int}|},$$
hence
$$\left|\displaystyle\sum_{\bm\in I(j)}\alpha_j\beta_{j,\bm}\widehat{\nu_{t-s,j}}(\bm)\right|>\frac{c_{3}c_{4}\eps^{\frac{\kappa_3}{2}}}{4|\cJ_{int}|}.$$
Using \eqref{acond} again, it follows that $|I(j)|>\eta|B^{\bZ^d}(0,R)|$. This concludes the proof.
\end{proof}

\section{Proof of Theorem \ref{mainthm}}
\subsection{High concentration of $\nu_{t-2s}$ from a large Fourier coefficient}
The aim of this subsection is to show that a large nontrivial Fourier coefficient $\widehat{\nu_{t,\om}}(\bm_0)$ implies the existence of a highly concentrated ball in $\bT^d$ with respect to $\nu_{t-2s}$. We make use of the following purely harmonic analytic lemma proved in \cite{BFLM11}, which is a quantitative analog of Wiener's lemma.
\begin{prop}\cite[Proposition 7.5.]{BFLM11}\label{BFLM}
Let $\cN(E;S)$ denote the covering number of $E\subset\bZ^d$ by $S$-balls. There exists $c_{6}>0$ so that if a probability measure $\nu$ on $\bT^d$ satisfies
\eqlabel{BFLMcond}{\cN\left(\set{\bm\in B^{\bZ^d}(0,R): |\widehat{\nu}(\bm)|>\eta}; S\right)>\lambda\left(\frac{R}{S}\right)^d}
with $S<const_d\cdot R$, then there exists an $S^{-1}$-separated set $A\subset \bT^d$ with
$$\nu\left(\displaystyle\bigcup_{p\in A}B^{\bT^d}(p,R^{-1})\right)>c_{6}(\eta\lambda)^3.$$
In particular, there exists $p_0\in\bT^d$ with $\nu(B^{\bT^d}(p_0,R^{-1}))>c_{6}(\eta\lambda)^3S^{-d}.$
\end{prop}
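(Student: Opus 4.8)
The statement is a quantitative form of Wiener's lemma; the plan is to evaluate the positive‑definite measure $\nu\ast\widetilde\nu$ against a Fej\'er kernel adapted to the frequency ball $B^{\bZ^d}(0,R)$, and then to read off spatial concentration of $\nu$ at scale $R^{-1}$ from the resulting lower bound.

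\emph{Fourier-side input.} Let $\widetilde\nu$ be the image of $\nu$ under $x\mapsto -x$, so that $\mu:=\nu\ast\widetilde\nu$ is a probability measure on $\bT^{d}$ with $\widehat{\mu}(\bm)=|\widehat{\nu}(\bm)|^{2}\ge 0$ for all $\bm\in\bZ^{d}$. Let $\Phi$ be the $d$‑dimensional Fej\'er kernel at scale $2R$ (the tensor product of one‑dimensional Fej\'er kernels), so that $\Phi\ge 0$, $\int_{\bT^{d}}\Phi=1$, $\widehat{\Phi}(\bm)=\prod_{i=1}^{d}(1-|m_{i}|/2R)_{+}$, and hence $\widehat{\Phi}(\bm)\ge 2^{-d}$ for every $\bm\in B^{\bZ^{d}}(0,R)$. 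Expanding in Fourier series,
\[
\int_{\bT^{d}}\!\!\int_{\bT^{d}}\Phi(\theta-\theta')\,d\nu(\theta)\,d\nu(\theta')=\int_{\bT^{d}}\Phi\,d\mu=\sum_{\bm\in\bZ^{d}}\widehat{\Phi}(\bm)\,|\widehat{\nu}(\bm)|^{2}\ \ge\ 2^{-d}\tau^{2}\,\bigl|\{\bm\in B^{\bZ^{d}}(0,R):|\widehat{\nu}(\bm)|>\tau\}\bigr|.
\]
Since the cardinality of a finite subset of $\bZ^{d}$ is at least its covering number by $S$‑balls, hypothesis \eqref{BFLMcond} upgrades this to $\int\!\int\Phi(\theta-\theta')\,d\nu\,d\nu\gg\tau^{2}\lambda\,(R/S)^{d}$, the implied constant depending only on $d$.

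\emph{Spatial-side conversion.} The kernel $\Phi$ is concentrated at scale $R^{-1}$: one has $\|\Phi\|_{\infty}\asymp R^{d}$, and from the one‑dimensional tail bound $\Phi^{(1)}(\theta)\ll (R\theta^{2})^{-1}$ there is a dimensional constant $C_{0}$ with $\int_{\{\|\theta\|_{\bZ}\ge C_{0}R^{-1}\}}\Phi\le\tfrac12$. Splitting the double integral according to whether $\|\theta-\theta'\|_{\bZ}<C_{0}R^{-1}$, and bounding the near‑diagonal part by $\|\Phi\|_{\infty}\int_{\bT^{d}}\nu\bigl(B^{\bT^{d}}(\theta,C_{0}R^{-1})\bigr)\,d\nu(\theta)$, yields
\[
\int_{\bT^{d}}\nu\bigl(B^{\bT^{d}}(\theta,C_{0}R^{-1})\bigr)\,d\nu(\theta)\ \gg\ \tau^{2}\lambda S^{-d}.
\]
In particular the set $E:=\{\theta:\nu(B^{\bT^{d}}(\theta,C_{0}R^{-1}))\gg\tau^{2}\lambda S^{-d}\}$ has $\nu(E)\gg\tau^{2}\lambda S^{-d}$, which already produces a single $R^{-1}$‑ball of $\nu$‑mass $\gg(\tau\lambda)^{3}S^{-d}$. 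From the same mass estimate, combined with the fact that \eqref{BFLMcond} forces the surviving Fourier $L^{2}$‑mass to be \emph{spread out} over many $S$‑separated frequencies (hence, dually, the concentration of $\nu$ to be spread out in space), one extracts an $S^{-1}$‑separated family $A\subset\bT^{d}$ — legitimate since $R^{-1}\ll S^{-1}$ — on whose $C_{0}R^{-1}$‑balls $\nu$ places total mass $\gg(\tau\lambda)^{3}$; rescaling $R$ by the constant $C_{0}$ absorbs it into the statement, and the ``in particular'' clause follows by pigeonhole over the $\ll S^{d}$ balls of $A$.

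\emph{Main obstacle.} Two points need care. First, the Fej\'er kernel decays only polynomially, so its tail must be controlled through the $L^{1}$ bound above rather than pointwise; this is precisely what dictates the scales $R^{-1}$ (spatial) and $S^{-1}$ (separation) in the conclusion and the role of the hypothesis $S<const_{d}\cdot R$. Second, and more seriously, converting ``a positive fraction of the Fourier $L^{2}$‑mass on $B^{\bZ^{d}}(0,R)$ survives at level $\tau$'' into ``an $S^{-1}$‑separated family of $R^{-1}$‑balls carries $\nu$‑mass $\gg(\tau\lambda)^{3}$'' — as opposed to merely one heavy ball — requires a probabilistic second‑moment selection argument in the spirit of Lemma \ref{flat}: one chooses a random $S^{-1}$‑separated family of $R^{-1}$‑balls, computes the first and second moments of the captured $\nu$‑mass and concludes by a Paley--Zygmund‑type inequality, the cube $(\tau\lambda)^{3}$ being the characteristic loss of such an argument. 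Assembling these pieces with the exact constants is the delicate part, and it follows the scheme of \cite[\S 7]{BFLM11}.
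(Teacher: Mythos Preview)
The paper does not prove this proposition; it is quoted verbatim from \cite[Proposition~7.5]{BFLM11} and used as a black box, so there is no ``paper's own proof'' to compare against. That said, your sketch is worth assessing on its own merits.

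Your Fej\'er-kernel computation is sound and, as you note, already yields a single $R^{-1}$-ball of $\nu$-mass $\gg\tau^{2}\lambda S^{-d}$, which (since $\tau,\lambda\le 1$) is stronger than the ``in particular'' clause. The genuine gap is in the main assertion about the $S^{-1}$-separated family $A$. Your Fourier-side step replaces the covering-number hypothesis $\cN(E;S)>\lambda(R/S)^{d}$ by the cruder bound $|E|\ge\cN(E;S)$; this throws away precisely the information that $E$ is spread over many $S$-separated frequency cells, which is the dual input needed to produce $S^{-1}$-separated spatial concentration. Once that information is discarded, no subsequent ``Paley--Zygmund selection'' on the spatial side can recover it: a measure can perfectly well have $\gg\lambda(R/S)^{d}$ Fourier coefficients above level $\tau$ all clustered in a single $S$-ball (think of $\nu$ concentrated near a single point), and then the $S^{-1}$-separated conclusion is false with any nontrivial $A$. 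The actual argument in \cite{BFLM11} works with an $S$-separated subset of $E$ and a two-scale kernel (one factor detecting $R^{-1}$-balls, the other encoding $S^{-1}$-boxes), so that the covering-number lower bound feeds directly into a count of distinct $S^{-1}$-boxes carrying mass. Your final paragraph gestures at this but defers to \cite[\S7]{BFLM11}, which is circular here. If you want a self-contained proof, you need to retain an $S$-separated set of frequencies throughout the kernel estimate rather than collapsing to cardinality at the first step.
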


Combining Propositions \ref{mainprop} and \ref{BFLM}, we deduce the following proposition about the concentration of $\nu_{t-s,j}$ for each $j\in\cJ'$.

\begin{prop}\label{highconcentrationj}
    Let $t>0$, $0<s\leq\kappa_4 t$, $e^{-\kappa_4 s}\leq \eps<c_{2}^{\frac{1}{\kappa_4}}$, $\eps^{\kappa_4}\leq r< c_{2}$ and $\bm_0\in\bZ^d\setminus\set{0}$. Let $z$, $\om$, and $\cJ'$ be as in Proposition \ref{mainprop}. Then there exists a constant $c_{7}>0$ such that if $|\widehat{\nu_{t,\om}}(\bm_0)|\ge c_{3}r^{-(d^2-1)}\eps^{\frac{\kappa_3}{2}}$, then for any $j\in\cJ'$ there exists $p_j\in\bT^d$ with
    $$\nu_{t-s,j}\big(B^{\bT^d}(p_j,R^{-1})\big)>c_{7}\eta^6,$$
    where $\eta=c_{5}\eps^{d^2+3n+\frac{\kappa_3}{2}}\|\bm_0\|^{-d}$ and $R=C_{15}\eps^{-d}\|\bm_0\|e^{ns}$.
\end{prop}
\begin{proof}
    It directly follows from Proposition \ref{BFLMcond} with $\nu_{t-s,j}$, $R$, $\eta$ as in Proposition \ref{mainprop}, $S=1$, and $\lambda=3^d\eta$.
\end{proof}

With this high-concentration of $\nu_{t-s,j}$'s for each $j\in\cJ'$, in the rest of this subsection, we show that $\nu_{t-2s}$ is highly concentrated.

By definition of $\mu_{t-s}$ and $\mu_{t-2s}$ we have
\eqlabel{2stranslate}{\mu_{t-2s}=(a_{-s})_{*}\mu_{t-s}.}
For $j\in\cJ'$ we define $\widetilde{\psi}_{j}\in C_c^{\infty}(X)$ by $\widetilde{\psi}_j(x):=\psi_j(a_sx)$ and let $\mu_{t-2s,j}$ be the probability measure on $Y$ defined by
\eqlabel{jdef'}{\mu_{t-2s,j}(f):=\widetilde{\alpha}_j^{-1}\int_Y \widetilde{\psi}_j\circ\pi(y)f(y)d\mu_{t-2s}(y),}
where $\widetilde{\alpha}_j:=\mu_{t-2s}(\widetilde{\psi}_j\circ\pi)$.
By \eqref{jmsr} and \eqref{2stranslate} we have
\eqlabel{alphatildecounting}{\widetilde{\alpha}_j=\mu_{t-2s}(\widetilde{\psi}_j\circ\pi)=\mu_{t-s}(\psi_j\circ\pi)= \alpha_j\asymp (r\eps)^{d^2-1}e^{-dmns}.}
One can deduce from \eqref{2stranslate} and \eqref{jdef'} that
\eqlabel{2stranslate'}{\begin{aligned}\mu_{t-2s,j}(f)&=\widetilde{\alpha}_j^{-1}\int_Y \widetilde{\psi}_j\circ\pi(y)f(y)d\mu_{t-2s}(y)\\&=\widetilde{\alpha}_j^{-1}\int_Y \widetilde{\psi}_j\circ\pi(y)f(y)d(a_{-s})_*\mu_{t-s}(y)\\&=\widetilde{\alpha}_j^{-1}\int_Y \psi_j\circ\pi(y)f(a_{-s}y)d\mu_{t-s}(y)\\&=(a_{-s})_{*}\mu_{t-s,j}(f).\end{aligned}}

On the other hand, $\mu_{t-2s,j}$ is supported on $\pi^{-1}(\supp\widetilde{\psi}_j)=\pi^{-1}(a_{-s}\supp\psi_j).$ Recall that $\mathds{1}_{\sB_{\mathbf{r}}(x_i)}\leq\psi_{\mathbf{r},i}\leq\mathds{1}_{\sB_{C_9^3\mathbf{r}}(x_i)}$, $\mathbf{r}'=(r\eps e^{-ds},r\eps,r\eps)$, and the property \eqref{boxconversion}. It follows that $\sB_{\mathbf{r}'}a_{-s}x_j\subseteq \supp\widetilde{\psi}_j\subseteq \sB_{C_9^3\mathbf{r}'}a_{-s}x_j$ and
\eqlabel{psitildesupport}{\supp\mu_{t-2s,j}\subseteq\pi^{-1}(\supp\widetilde{\psi}_j)\subseteq \pi^{-1}(\sB_{C_9^3\mathbf{r}'}a_{-s}x_j).}

Let $\nu_{t-2s}:=\sigma_{*}\mu_{t-2s}$ and $\nu_{t-2s,j}:=\sigma_{*}\mu_{t-2s,j}$ for $j\in \cJ'$. We investigate the relation between $\nu_{t-s,j}$ and $\nu_{t-2s,j}$. In view of the notations of $\xi(x,u)$ and $\gamma(x,u)$, for any $x\in X$ we have
$$a_{s}\iota(a_{-s}x)=\xi(a_{-s}x,\operatorname{id})\gamma(a_{-s}x,\operatorname{id}).$$
For each $j\in\cJ'$ it follows that $\xi(a_{-s}x_j,\operatorname{id})=\iota(x_j)$. For simplicity let us write $\gamma_j:=\gamma(a_{-s}x_j,\operatorname{id})$ so that
$\iota(a_{-s}x_j)\gamma_j^{-1}=a_{-s}\iota(x_j)$. 

\begin{lem}\label{tubeconcentration}
    For any $j\in\cJ'$ and $f\in C(\bT^d)$ we have $\nu_{t-2s,j}=(\gamma_j^{-1})_*\nu_{t-s,j}$.
    In particular, under the assumptions in Proposition \ref{highconcentrationj}, 
    $$\nu_{t-2s,j}\big(\gamma_j^{-1}B^{\bT^d}(p_j,R^{-1})\big)>c_{7}\eta^6.$$
\end{lem}

\begin{proof}
    We first claim that $\gamma(a_{-s}x',\operatorname{id})=\gamma_j$ for any $x'\in\supp\psi_{j}$. Recall that $\iota(x_j),\iota(a_{-s}x_j)\in\cF(C_9^4r\eps,C_4\eps^{\frac{1}{100d^2}})$ from the construction of $\cJ'$ in Proposition \ref{mainprop}. It follows that $\iota(x')\in \sB_{C_9^3\mathbf{r}}\iota(x_j)$ and $\iota(a_{-s}x')\in\sB_{C_9^3\mathbf{r}'}\iota(a_{-s}x_j)$, hence $\bd^G(\iota(x'),\iota(x_j))<3C_9^3r\eps$ and $\bd^G(\iota(a_{-s}x'),\iota(a_{-s}x_j))<3C_9^3r\eps$. For simplicity let us write $\gamma=\gamma(a_{-s}x',\operatorname{id})$. From the relations 
    $$\iota(a_{-s}x_j)\gamma_j^{-1}=a_{-s}\iota(x_j), \qquad \iota(a_{-s}x')\gamma^{-1}=a_{-s}\iota(x'),$$ we have
    $\bd^G(\gamma_j\iota(a_{-s}x_j)^{-1},\gamma\iota(a_{-s}x')^{-1})=\bd^G(\iota(x_j)^{-1},\iota(x')^{-1})<3C_9^3r\eps$, hence
    $$3C_9^3r\eps>\bd^G(\gamma^{-1}\gamma_j,\iota(a_{-s}x')^{-1}\iota(a_{-s}x_j))\geq \bd^G(\gamma^{-1}\gamma_j,\operatorname{id})-3C_9^3r\eps.$$
    The claim then follows from the discreteness of $\Gamma$.

    By Lemma \ref{siggam}, $\sigma(a_{-s}y)=\gamma_j^{-1}\sigma(y)$ for any $y\in\supp\psi_j\circ\pi$. It follows that $\sigma_{*}\big((a_{-s})_*\mu_{t-s,j}\big)=(\gamma_j^{-1})_*\big(\sigma_*(\mu_{t-s,j})\big)=(\gamma_j^{-1})_*\nu_{t-s,j}$. Applying \eqref{2stranslate'}, we get $\nu_{t-2s,j}=\sigma_{*}\big((a_{-s})_*\mu_{t-s,j}\big)=(\gamma_j^{-1})_*\nu_{t-s,j}$. Moreover, combining with Proposition \ref{highconcentrationj} we get
    \eq{\nu_{t-2s,j}\big(\gamma_j^{-1}B^{\bT^d}(p_j,R^{-1})\big)=\nu_{t-s,j}\big(B^{\bT^d}(p_j,R^{-1})\big)>c_{7}\eta^6.}
\end{proof}

Recall that $\cJ'$ was constructed so that $\iota(a_{-s}x_j)\in\cF(C_9^4r\eps,C_4\eps^{\frac{1}{100d^2}})$. This implies that $\iota(a_{-s}x_j)^{-1}$ does not belong to the locus $\cE$, hence for $j\in\cJ'$ we may define $h_{+,j}\in H$, $h_{0,j}\in H^0$, and $h_{-,j}\in H^{-}$ by
\eqlabel{xjdecomposition}{\iota(a_{-s}x_j)^{-1}=h_{+,j}h_{0,j}h_{-,j}.}
Moreover, by definition of $\cF(C_9^4r\eps,C_4\eps^{\frac{1}{100d^2}})$ we have $$\|\iota(a_{-s}x_j)\|\ll \operatorname{ht}(\iota(a_{-s}x_j))^{d-1}\ll \eps^{-\frac{d-1}{100d^2}}$$ and 
$$\bd^G(\iota(a_{-s}x_j)^{-1},\cE)\geq \|\iota(a_{-s}x_j)\|^{-2}\bd^G(\iota(a_{-s}x_j),\cE^{-1}) \geq (C_9^4r\eps)^{\frac{1}{10d}}$$ for all $j\in\cJ'$. It follows from \eqref{hdecompositionbound} that
\eqlabel{hjbound}{\|h_{+,j}\|,\|h_{0,j}\|,\|h_{-,j}\|\ll (r\eps)^{-\frac{d-1}{10d}}\|\iota(a_{-s}x_j)\|^{d-1}\ll \eps^{-\frac{1}{5}}.}

Let us write $\gamma_j^{-1}B^{\bT^d}(p_j,R^{-1})=\widetilde{p}_j+\gamma_j^{-1}B^{\bR^d}(0,R^{-1})$ with $\widetilde{p}_j:=\gamma_j^{-1}p_j$. We now describe the shape of $\gamma_j^{-1}B^{\bR^d}(0,R^{-1})$. Recall that $\gamma_j^{-1}=\iota(a_{-s}x_j)^{-1}a_{-s}\iota(x_j)$, $\|\iota(x_j)\|_{
\operatorname{op}
}\ll \|\iota(x_j)\|\ll \operatorname{ht}(\iota(x_j))^{d-1}\ll \eps^{-\frac{d-1}{100d^2}}$, and $R=C_{15}\eps^{-d}\|\bm_0\|e^{ns}$. Then
\eq{\begin{aligned}
    \gamma_j^{-1}B^{\bR^d}(0,R^{-1})&=\iota(a_{-s}x_j)^{-1}a_{-s}\iota(x_j)B^{\bR^d}(0,R^{-1})\\&\subseteq\iota(a_{-s}x_j)^{-1}a_{-s}B^{\bR^d}(0,\eps^{d-\frac{1}{10}}e^{-ns})\\&\subseteq h_{+,j}h_{0,j}h_{-,j}\big(B^{\bR^{m}}(0,\eps^{d-\frac{1}{10}}e^{-2ns})\times B^{\bR^{n}}(0,\eps^{d-\frac{1}{10}}e^{-(n-m)s})\big)
\end{aligned}}
for sufficiently small $\eps$. Using \eqref{hjbound} and the property that $h_{0,j}$ and $h_{-,j}$ stabilizes the subspace $\set{\mathbf{0}_m}\times\bR^{n}$, we obtain
\eqlabel{tubeshape}{\gamma_j^{-1}B^{\bR^d}(0,R^{-1})\subseteq h_{+,j}\cT,}
where $\cT:=B^{\bR^{m}}(0,\eps^{d-1}e^{-2ns})\times B^{\bR^{n}}(0,\eps^{d-1}e^{-(n-m)s})$. Recall that we are assuming $m\leq n$ as discussed in Subsection \ref{dual}. Thus the tube $\cT$ is contained in $B^{\bR^d}(0,\eps^{d-1})$ and projected onto $\bT^d$ injectively.

\begin{lem}\label{concentrationlemma}
    There exist constants $c_{8}>0$ and $0<c_{9}<c_{2}^{\frac{1}{\kappa_4}}$ such that the following holds. Let $t>0$, $0<s\leq\kappa_4 t$, $e^{-\kappa_4 s}\leq \eps<c_{9}$, $\eps^{\kappa_4}\leq r< c_{9}^{\kappa_4}$ and $0<\|\bm_0\|<\eps^{-1}$. Let $z$, $\om$, and $\cJ'$ be as in Proposition \ref{mainprop} and suppose $|\widehat{\nu_{t,\om}}(\bm_0)|\ge c_{3}r^{-(d^2-1)}\eps^{\frac{\kappa_3}{2}}$. Then there exists $\widetilde{x}\in X$ with $\iota(\widetilde{x})\in\cF(C_9^4r\eps,C_4\eps^{\frac{1}{100d^2}})$ and $\cJ''\subseteq \cJ'$ with $|\cJ''|\geq c_{8}\eps^{5d^2}e^{dmns}$  such that the followings hold for any $j\in \cJ''$:
    \begin{enumerate}
        \item $\iota(a_{-s}x_j)\in B^G(\iota(\widetilde{x}),r\eps)$,
        \item $\set{h_{+,j}}_{j\in \cJ''}$ is a $r\eps^2 e^{-ds}$-separated set in $H$,
        \item $\nu_{t-2s}(\widetilde{p}_j+h_{+,j}\cT)>\eps^{50d^2}$.
    \end{enumerate}
\end{lem}
\begin{proof}
    Since $\set{\iota(a_{-s}x_j)}_{j\in\cJ'}\subset \cF(C_9^4r\eps,C_4\eps^{\frac{1}{100d^2}})$ and the set $\cF(C_9^4r\eps,C_4\eps^{\frac{1}{100d^2}})$ is covered by $\ll (r\eps)^{-(d^2-1)}$ balls of radius $r\eps$, we can find $\widetilde{x}\in X$ and $\cJ''\subset\cJ'$ such that $\iota(a_{-s}x_j)\in B^G(\iota(\widetilde{x}),r\eps)$ for $j\in\cJ''$ and $|\cJ''|\gg (r\eps)^{d^2-1}|\cJ'|$. Since $$|\cJ'|\geq \eta|\cJ|\gg (\eps^{d^2+3n+\frac{\kappa_3}{2}}\|\bm_0\|^{-d})(r\eps)^{-(d^2-1)}e^{dmns},$$ $|\cJ''|\geq c_{8}\eps^{5d^2}e^{dmns}$ for some constant $c_{8}>0$.

    To show (2) suppose that there exist $j\neq j'\in\cJ''$ such that $h_{+,j}h_{+,j'}^{-1}\in B^H(\operatorname{id},r\eps^2 e^{-ds})$. Let us write $\iota(a_{-s}x_j)^{-1}=h_{+,j}\widetilde{h}_j'$ and $\iota(a_{-s}x_{j'})^{-1}=h_{+,j'}\widetilde{h}_{j'}'$, where $\widetilde{h}_j'=h_{0,j}h_{-,j}$ and $\widetilde{h}_{j'}'=h_{0,j'}h_{-,j'}$. 
    
    Since both $\iota(a_{-s}x_j)$ and $\iota(a_{-s}x_{j'})$ are contained in a ball of radius $r\eps$, there uniquely exist $h_+\in B^H(\operatorname{id},2C_5r\eps)$ and $\widetilde{h}'\in B^{\widetilde{H}'}(\operatorname{id},2C_5r\eps)$ such that $\iota(a_{-s}x_j)^{-1}=h_+\widetilde{h}'\iota(a_{-s}x_{j'})^{-1}$, i.e. $(h_{+,j}\widetilde{h}_j')^{-1}=h_+\widetilde{h}'(h_{+,j'}\widetilde{h}_{j'}')^{-1}$. We can write this again by $h_+^{-1}(\widetilde{h}_j')^{-1}=\widetilde{g}'(h_{+,j'}^{-1}h_{+,j})$, where $\widetilde{g}'=\widetilde{h}'(\widetilde{h}_{j'}')^{-1}\in \widetilde{H}'$. By \eqref{norm} and \eqref{hjbound} we have $\|\widetilde{g}'\|\ll \|\widetilde{h}_{j'}'\|\ll\|h_{0,j'}\|\|h_{-,j'}\|\ll\eps^{-\frac{2}{5}}$. Hence, using \eqref{leftmetric}, $$\bd^G(\widetilde{g}'(h_{+,j'}^{-1}h_{+,j}),\widetilde{g}')\ll \|\widetilde{g}'\|^2\bd^G(h_{+,j'}^{-1}h_{+,j},\operatorname{id})\ll r\eps^{\frac{6}{5}}e^{-ds}.$$
    It follows that $h_+^{-1}(\widetilde{h}_j')^{-1}\in B^G(\operatorname{id}, C_5^{-1}r\eps e^{-ds})\widetilde{H}'\subseteq B^H(\operatorname{id}, r\eps e^{-ds})\widetilde{H}'$. Thus, we obtain $h_+\in B^H(\operatorname{id}, r\eps e^{-ds})$. However, this means that $\iota(a_{-s}x_j)\in \sB_{2C_5^2 \mathbf{r}'}\iota(a_{-s}x_{j'})$ and contradicts the fact that $\set{\iota(a_{-s}x_j)}_{j\in\cJ'}$ is a $\sB_{C_9\mathbf{r}'}$-separated set, as we set $C_9=6C_5^2$. Thus (2) is proved.

    We next prove (3). Let us choose a maximal $C_9^5r\eps e^{-ds}$-separated set $\set{u_1,\cdots,u_M}$ in $B^H(\operatorname{id},r\eps)$, where $M\asymp e^{dmns}$. Since $\iota(a_{-s}x_j)$ is contained in $\cF(C_9^4r\eps,C_4\eps^{\frac{1}{100d^2}})$, for any $1\leq k\leq M$ and $x\in\supp\widetilde{\psi}_j$ we have $\iota(u_kx)=u_k\iota(x)$ by \eqref{psitildesupport}, hence $\sigma(u_ky)=\sigma(y)$ for any $1\leq k\leq M$ and $y\in\supp\widetilde{\psi}_j\circ\pi$. It follows that
    $\sigma_*\big((u_k)_*\mu_{t-2s,j}\big)=\nu_{t-2s,j}$, hence combining with Lemma \ref{tubeconcentration} and \eqref{tubeshape} we obtain
\eqlabel{perturbationcon}{\sigma_*\big((u_k)_*\mu_{t-2s,j}\big)(\widetilde{p}_j+h_{+,j}\cT)\geq c_{7}\eta^6\gg (\eps^{d^2+3n+\frac{\kappa}{3}}\|\bm_0\|^{-d})^6\gg \eps^{30d^2}}
for any $1\leq k\leq M$, if $\eps$ is sufficiently small. 

On the other hand, we may express $(u_k)_*\mu_{t-2s,j}$ by
\eq{\begin{aligned}
    (u_k)_*\mu_{t-2s,j}(f)&=\widetilde{\alpha}_j^{-1}m_H(V)^{-1}\int_V \widetilde{\psi}_j(\pi(a_{t-2s}uy_0))f(u_ka_{t-2s}uy_0)dm_H(u)\\&=\widetilde{\alpha}_j^{-1}m_H(V)^{-1}\int_{u_k'V} \widetilde{\psi}_j^k(\pi(a_{t-2s}uy_0))f(a_{t-2s}uy_0)dm_H(u),
\end{aligned}}
where $\widetilde{\psi}_j^k(x):=\widetilde{\psi}_j(u_k^{-1}x)$ and $u_k'=a_{-(t-2s)}u_ka_{t-2s}$.  It follows that
$$(u_k)_*\mu_{t-2s,j}(f)= \widetilde{\alpha}_j^{-1}\int_Y \big(\widetilde{\psi}_j^k\circ\pi\big)(y)f(y)d\mu_{t-2s}(y)+O(\widetilde{\alpha}_j^{-1}\|f\|_\infty e^{-d(t-2s)})$$
for any $f\in L^\infty(Y)$, since $m_H(V\triangle u_k'V)\ll e^{-d(t-2s)}$. We also note that
$\supp \widetilde{\psi}_j^k=u_k\cdot\supp \widetilde{\psi}_j$ is contained in $$u_k\sB_{C_9^3\mathbf{r}'}=B^{H}(u_k,C_9^3r\eps e^{-ds})B^{H^0}(\operatorname{id},C_9^3r\eps)B^{H^-}(\operatorname{id},C_9^3r\eps),$$ hence the sets $\supp \widetilde{\psi}_j^k$ for $1\leq k\leq M$ are all disjoint, which implies that $\displaystyle\sum_{k=1}^{M} \widetilde{\psi}_j^k\leq 1_{X}$. It follows that for any $f\in L^\infty(Y)$
\eq{\begin{aligned}\displaystyle\sum_{k=1}^{M}&(u_k)_*\mu_{t-2s,j}(f)\\&=\widetilde{\alpha}_j^{-1}\int_Y \left(\displaystyle\sum_{k=1}^{M}\big(\widetilde{\psi}_j^k\circ\pi\big)(y)\right)f(y)d\mu_{t-2s}(y)+O(M\widetilde{\alpha}_j^{-1}\|f\|_\infty e^{-d(t-2s)})\\&\leq \widetilde{\alpha}_j^{-1}\int_Y f(y)d\mu_{t-2s}(y)+O(M\widetilde{\alpha}_j^{-1}\|f\|_\infty e^{-d(t-2s)})\\&= \widetilde{\alpha}_j^{-1}\mu_{t-2s}(f)+O(M\widetilde{\alpha}_j^{-1}\|f\|_\infty e^{-d(t-2s)}).\end{aligned}}
Combining with \eqref{perturbationcon} we get
\eq{\begin{aligned}\nu_{t-2s}(\widetilde{p}_j+h_{+,j}\cT)&=\sigma_*\mu_{t-2s}(\widetilde{p}_j+h_{+,j}\cT)\\&\geq \widetilde{\alpha}_j \displaystyle\sum_{j=1}^{M}\sigma_*\big((u_k)_*\mu_{t-2s,j}\big)(\widetilde{p}_j+h_{+,j}\cT)+O(Me^{-d(t-2s)})\\&\gg  \widetilde{\alpha}_j M \eps^{30d^2}+O(Me^{-d(t-2s)})\gg \widetilde{\alpha}_j M\eps^{30d^2}. \end{aligned}}
Recall that $\widetilde{\alpha}_j\asymp (r\eps)^{d^2-1}e^{-dmns}$ and $M\asymp e^{dmns}$. Thus, we conclude
$$\nu_{t-2s}(\widetilde{p}_j+h_{+,j}\cT)\gg (r\eps)^{d^2-1}\eps^{30d^2}\gg \eps^{40d^2}.$$
Taking $c_{9}>0$ sufficiently small, we get $ \nu_{t-2s}(\widetilde{p}_j+h_{+,j}\cT)\geq \eps^{50d^2}$ as desired.
    
\end{proof}

\begin{prop}\label{highcon}
Let $t>0$, $-\frac{1}{\kappa_4}\log c_{9}<s\leq\kappa_4 t$, $e^{-\kappa_{4}^2 s}\leq r<c_{9}^{\kappa_4}$, and $z$ and $\om$ be as in Proposition \ref{mainprop}. If $|\widehat{\nu_{t,\om}}(\bm_0)|\ge c_{3}e^{-\kappa_{4}^2 s}$ for some $0<\|\bm_0\|<e^{\kappa_4 s}$, then there exists $p\in\bT^d$ with
$$\nu_{t-2s}(B^{\bT^d}(p,e^{-s}))> c_{10}e^{-\frac{\kappa_3}{20d}s}$$
for some $c_{10}>0$.
\end{prop}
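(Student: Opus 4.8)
The plan is to feed the conclusion of Proposition~\ref{mainprop} into Proposition~\ref{BFLM}. First I would set $\eps:=e^{-2\kappa_5d^2s/\kappa_3}$ while keeping $r=e^{-\kappa_5s}$ and $r'=re^{-3ds}$ as in the statement. Since $\kappa_5\le\frac{\kappa_3\kappa_4}{2d^2}\le\kappa_4$ one has $\eps,r\ge e^{-\kappa_4s}$ and $s\le\kappa_5t\le\kappa_4t$, and (enlarging $C_{15}$ if necessary) $\eps,r\le\tfrac12$; thus all hypotheses of Proposition~\ref{mainprop} hold. The point of this choice of $\eps$ is the identity $r^{-(d^2-1)}\eps^{\kappa_3/2}=e^{\kappa_5(d^2-1)s}\,e^{-\kappa_5d^2s}=e^{-\kappa_5s}$, so the hypothesis $|\widehat{\nu_{t,\om}}(\bm_0)|\ge C_{19}e^{-\kappa_5s}$ of the present proposition is \emph{exactly} the hypothesis $|\widehat{\nu_{t,\om}}(\bm_0)|\ge C_{19}r^{-(d^2-1)}\eps^{\kappa_3/2}$ of Proposition~\ref{mainprop}. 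Applying the latter produces $\cJ'\subseteq\cJ$ with $|\cJ'|\ge|\cJ|/3$ and a probability measure $\nu=\frac1{|\cJ'|}\sum_{j\in\cJ'}\nu_{t-s,j}$ with $\bigl|\{\bm\in B^{\bZ^d}(0,R):|\widehat\nu(\bm)|\ge\eta\}\bigr|>\eta^3|B^{\bZ^d}(0,R)|$, where $\eta=C_{22}\eps^{d^2+3n+\kappa_3/2}\|\bm_0\|^{-d}$ and $R=C_{13}\eps^{-d}\|\bm_0\|e^{ns}$.

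Next I would apply Proposition~\ref{BFLM} with $S$ a fixed constant (say $S=1$) and $\tau=\eta/2$. Since a ball of radius $S$ in $\bZ^d$ contains $O(1)$ lattice points and $|B^{\bZ^d}(0,R)|\asymp R^d$ (valid because $R\ge e^{ns}$ is large, which also gives $S<const_d\cdot R$), the set $\{\bm\in B^{\bZ^d}(0,R):|\widehat\nu(\bm)|>\tau\}$ contains the set produced above and hence has covering number $\gg\eta^3R^d$; that is, \eqref{BFLMcond} holds with $\lambda\asymp\eta^3$. Proposition~\ref{BFLM} then yields $p_0\in\bT^d$ with $\nu(B^{\bT^d}(p_0,R^{-1}))>c(\tau\lambda)^3S^{-d}\gg\eta^{12}$. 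Since $C_{13}\ge1$, $\eps\le1$, $\|\bm_0\|\ge1$ force $R\ge e^{ns}$, we get $R^{-1}\le e^{-ns}$, so $B^{\bT^d}(p_0,R^{-1})\subseteq B^{\bT^d}(p_0,e^{-ns})$; combining with $\nu\le C_{23}\nu_{t-s}$ from Remark~\ref{rmk} gives $\nu_{t-s}(B^{\bT^d}(p_0,e^{-ns}))\gg\eta^{12}$.

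It then remains to bound $\eta^{12}$ from below, and this is where the two terms defining $\kappa_5$ do their work. Using $\kappa_3\le1$ and $n\le d$ we have $\eps^{d^2+3n+\kappa_3/2}\ge e^{-2\kappa_5d^2(d^2+3n+1)s/\kappa_3}$, and the hypothesis $\|\bm_0\|<e^{\frac{\kappa_3}{480d^2}ns}$ gives $\|\bm_0\|^{-d}>e^{-\frac{\kappa_3n}{480d}s}$, so $\eta^{12}\gg e^{-(24\kappa_5d^2(d^2+3n+1)/\kappa_3+\frac{\kappa_3n}{40d})s}$. The bound $\kappa_5\le\frac{n\kappa_3^2}{960d^3(d^2+3n+1)}$ makes $24\kappa_5d^2(d^2+3n+1)/\kappa_3\le\frac{\kappa_3n}{40d}$, whence $\eta^{12}\gg e^{-\frac{\kappa_3}{20d}ns}$, and $\nu_{t-s}(B^{\bT^d}(p_0,e^{-ns}))\ge C_{24}e^{-\frac{\kappa_3}{20d}ns}$ for a suitable $C_{24}>0$. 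I do not anticipate a genuine obstacle: the whole argument is a concatenation of the two black-box propositions, and the only delicate part is this last bookkeeping of exponents — the first term of $\kappa_5$ keeping the range of $\eps$ admissible for Proposition~\ref{mainprop}, the second absorbing the twelfth-power loss incurred in passing through Propositions~\ref{mainprop} and~\ref{BFLM} into the target rate $e^{-\frac{\kappa_3}{20d}ns}$.
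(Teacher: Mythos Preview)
Your proposal is correct and follows essentially the same approach as the paper: the choice $\eps=e^{-2\kappa_5 d^2 s/\kappa_3}=r^{2d^2/\kappa_3}$, the verification that this makes the hypothesis of Proposition~\ref{mainprop} match exactly, the application of Proposition~\ref{BFLM} with $S=1$ and $\lambda\asymp\eta^3$, and the final exponent bookkeeping are all identical to the paper's proof. Your use of $\tau=\eta/2$ rather than $\tau=\eta$ is a harmless (and slightly more careful) variant accounting for the strict inequality in \eqref{BFLMcond}.
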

\begin{proof}
Recall that $\kappa_4$ was chosen by $\kappa_4=\frac{\min\set{\kappa_2,\kappa_3}}{2000d^3}$. Let $\eps=e^{-\kappa_4 s}$ so that $r\geq \eps^{\kappa_4}$. Since $\kappa_4\leq \frac{\kappa_3}{2000d^3}$, we have
$$r^{-(d^2-1)}\eps^{\frac{\kappa_3}{2}}\leq \eps^{\frac{\kappa_3}{2}-(d^2-1)\kappa_4}<\eps^{\kappa_4}=e^{-\kappa_4^2s},$$ hence the assumption $|\widehat{\nu_{t,\om}}(\bm_0)|\ge c_{3}r^{-(d^2-1)}\eps^{\frac{\kappa_3}{2}}$ of Lemma \ref{concentrationlemma} is satisfied. It follows that there exists $\widetilde{x}\in X$ with $\iota(\widetilde{x})\in\cF(C_9^4r\eps,C_4\eps^{\frac{1}{100d^2}})$ and $\cJ''\subseteq \cJ'$ with $|\cJ''|\geq c_{8}\eps^{5d^2}e^{dmns}$ such that the properties (1),(2),(3) in Lemma \ref{concentrationlemma} hold. We consider the cases $m<n$ and $m=n$ separately.

\textbf{Case 1: $m<n$.}\\
In this case we pick any element $j\in\cJ''$ and take $p=\widetilde{p}_j$. By (3) of Lemma \ref{concentrationlemma} we have $\nu_{t-2s}(p+h_{+,j}\cT)>\eps^{50d^2}$. Since $n-m\ge 1$, the tube $\cT$ is contained in $B^{\bR^d}(0,\eps^{d-1}e^{-s})$. It follows from \eqref{hjbound} that the tube $h_{+,j}\cT$ is contained in $B^{\bR^d}(0,e^{-s})$. Thus,
$$\nu_{t-2s}(B^{\bT^d}(p,e^{-s}))\geq \nu_{t-2s}(p+h_{+,j}\cT)>\eps^{50d^2}=e^{-50d^2\kappa_4 s}\geq e^{-\frac{\kappa_3}{20d}s}.$$

\textbf{Case 2: $m=n$.}\\
As the tube $\cT=B^{\bR^m}(0,\eps^{d-1}e^{-2ns})\times B^{\bR^n}(0,\eps^{d-1})$ is not contained in the ball $B^{\bR^d}(0,e^{-s})$, in this case it is not sufficient to use a single element $j\in\cJ''$. Indeed, we need to investigate the intersection of two tubes in the collection $\set{\widetilde{p}_j+h_{+,j}\cT}_{j\in\cJ''}$. Denote by $A_j\in \operatorname{Mat}_{n,n}(\bR)$ the matrix corresponding to $h_{+,j}\in H$, i.e. $h_{+,j}=\varphi(A_j)=\left(\begin{matrix} \Id_m & A_j\\
0 & \Id_n\end{matrix}\right)$. Note that $\|A_j\|\ll \eps^{-\frac{1}{5}}$ by \eqref{hjbound}. Here, for $A_j\in \operatorname{Mat}_{n,n}(\bR)$ the norm $\|A_j\|$ stands for just the maximum of the absolute values of the entries $A_j$, in contrast to the $\|g\|$ defined in \S \ref{subsec:Metricnorm} which also involves $g^{-1}$.
\begin{lem}\label{IntersectionLemma}
    For any $j,j'\in\cJ''$ with $\operatorname{det}(A_{j}-A_{j'})\neq 0$, the intersection of two tubes $\widetilde{p}_{j}+h_{+,j'}\cT$ and $\widetilde{p}_{j'}+h_{+,j'}\cT$ in $\bT^d$ is contained in a ball of radius $O(|\operatorname{det}(A_{j}-A_{j'})|^{-1}e^{-2ns})$.
\end{lem}
\begin{proof}
    Since $h_{+,j}\cT, h_{+,j'}\cT\subset \bR^d$ are projected onto $\bT^d$ injectively, we may consider $\widetilde{p}_{j}+h_{+,j'}\cT$ and $\widetilde{p}_{j'}+h_{+,j'}\cT$ as subsets of $\bR^d$. One can write $h_{+,j'}\cT$ and $h_{+,j'}\cT$ explicitly as follows:
    $$h_{+,j}\cT=\set{(v_+,v_-)\in \bR^m\times\bR^n: \|v_+-A_jv_-\|< \eps^{d-1}e^{-2ns}, \; \|v_-\|<\eps^{d-1}},$$
    $$h_{+,j'}\cT=\set{(v_+,v_-)\in \bR^m\times\bR^n: \|v_+-A_{j'}v_-\|< \eps^{d-1}e^{-2ns}, \; \|v_-\|<\eps^{d-1}}.$$
    Since $h_{+,j}\cT$ and $h_{+,j'}\cT$ are neighborhoods of $n(=m)$-dimensional subspaces, by translation, it suffices to consider the case $\widetilde{p}_{j}=\widetilde{p}_{j'}=0$.

    Suppose that $(v_+,v_-)\in h_{+,j'}\cT\cap h_{+,j'}\cT$. Then $(v_+,v_-)$ satisfies 
    $$\|(A_j-A_{j'})v_-\|\leq \|v_+-A_jv_-\|+\|v_+-A_{j'}v_-\|<2\eps^{d-1}e^{-2ns}.$$
    Note that $(A_j-A_{j'})^{-1}$ can be written $\operatorname{det}(A_j-A_{j'})^{-1}\operatorname{adj}(A_j-A_{j'})$, where $\operatorname{adj}(A_j-A_{j'})$ is the adjugate matrix of $A_j-A_{j'}$, whose norm is bounded above by $\ll \|A_j-A_{j'}\|^{d-1}\ll \eps^{-\frac{d-1}{5}}$. It follows that
    \eq{\begin{aligned}
        \|v_-\|&\ll \|(A_j-A_{j'})^{-1}\|\|(A_j-A_{j'})v_-\|\\&\ll |\operatorname{det}(A_j-A_{j'})|^{-1}\eps^{-\frac{d-1}{5}}(\eps^{d-1}e^{-2ns}),
    \end{aligned}}
    hence $\|v_-\|\ll |\operatorname{det}(A_j-A_{j'})|^{-1}\eps^{\frac{4(d-1)}{5}}e^{-2ns}$. It also implies that $$\|v_+\|\leq \|A_j\|\|v_-\|+\eps^{d-1}e^{-2ns}\ll |\operatorname{det}(A_j-A_{j'})|^{-1}e^{-2ns}.$$ It completes the proof.
\end{proof}

We also need the following counting lemma in order to control the size of $|\operatorname{det}(A_{j_1}-A_{j_2})|^{-1}$:
\begin{lem}\label{detcount}
    For any $r\eps^{-n}e^{-ds}<\tau <1$ and $j_0\in \cJ''$,
    $$\#\set{j\in\cJ'': |\operatorname{det}(A_{j}-A_{j_0})|<\tau}\ll \tau^{\frac{1}{n}}\eps^{-3d^2}e^{dmns}.$$
\end{lem}
\begin{proof}
    Let $\cZ\subset \operatorname{Mat}_{n,n}(\bR)$ be the zero set of the map $X\mapsto\operatorname{det}(X)$, and
    $$\cL_{j_0,\tau}:=\set{A\in \operatorname{Mat}_{n,n}(\bR): |\operatorname{det}(A-A_{j_0})|<2\tau,\; \|A\|\ll \eps^{-\frac{1}{5}}}.$$
    Since $X\mapsto \operatorname{det}(X)$ is a homogeneous polynomial of degree $n$, the set $\cL_{j_0,\tau}$ is contained in $O(\tau^\frac{1}{n})$-neighborhood of the hypersurface $A_{j_0}+\cZ$. By \cite[Theorem 1.1]{Lot15} the volume of the $O(\tau^\frac{1}{n})$-neighborhood of the hypersurface $A_{j_0}+\cZ$ is $\ll \tau^{\frac{1}{n}}\eps^{-\frac{1}{5}}$. 

    We next claim that for $j\in\cJ''$ with $|\operatorname{det}(A_j-A_{j_0})|<\tau$ the ball of radius $\frac{1}{2}r\eps e^{-ds}$ centered at $A_j$ is contained in $\cL_{j_0,\tau}$. Indeed, for every $A$ contained in the ball we have $\|A\|\ll \eps^{-\frac{1}{5}}$, hence all the first derivative of the polynomial $X\mapsto \operatorname{det}(X-A_{j_0})$ are $\ll \eps^{-\frac{n-1}{5}}$. It follows that
    $$|\operatorname{det}(A-A_{j_0})|<|\operatorname{det}(A_j-A_{j_0})|+O(\eps^{-\frac{n-1}{5}}\cdot r\eps e^{-ds})<2\tau,$$
    thus the claim is proved.
    
    Since $A_j$'s are $r\eps^2 e^{-ds}$-separated in $\operatorname{Mat}_{n,n}(\bR)$ by (2) of Lemma \ref{concentrationlemma}, the cardinality of the set $\set{j\in\cJ'': |\operatorname{det}(A_{j}-A_{j_0})|<\tau}$ is bounded by
    $$ \ll \tau^{\frac{1}{n}}\eps^{-\frac{1}{5}}(r\eps^2e^{-ds})^{-mn}\leq \tau^{\frac{1}{n}}(\eps^3e^{-ds})^{-mn}=\tau^{\frac{1}{n}}\eps^{-3d^2}e^{dmns}.$$
\end{proof}

Recall that $|\cJ''|\geq c_{8}\eps^{5d^2}e^{dmns}$. Applying Lemma \ref{detcount} with $\tau=\eps^{200d^3}$, one can find $\set{j_1,\cdots,j_M}\subset \cJ''$ with $M\asymp \eps^{-100d^2}$ such that $$|\operatorname{det}(A_{j_k}-A_{j_l})|\geq \tau=\eps^{200d^3}$$ for any $1\leq k<l\leq M$, since
$$\frac{|\cJ''|}{\tau^{\frac{1}{d}}\eps^{-3d^2}e^{dmns}}\gg \frac{\eps^{5d^2}e^{dmns}}{\eps^{200d^2}\eps^{-3d^2}e^{dmns}}\geq \eps^{-100d^2}.$$
Denote by $\varrho_k$ the characteristic function of the set $\widetilde{p}_{j_k}+h_{+,j_k}\cT$ for $1\leq k\leq M$ and let $\varrho:=\displaystyle\sum_{k=1}^M \varrho_k$. Then by (3) of Lemma \ref{concentrationlemma} we have
\eqlabel{Qestimate0}{\int \varrho d\nu_{t-2s}=\sum_{k=1}^M \nu_{t-2s}(\widetilde{p}_{j_k}+h_{+,j_k}\cT) \gg M\eps^{50d^2}\gg \eps^{-50d^2}.}
Let $Q:=\int \varrho d\nu_{t-2s}$. By Cauchy-Schwarz inequality, we get
\eqlabel{Qestimate1}{\int \varrho(b)^2 d\nu_{t-2s}(b)\geq \left(\int\varrho d\nu_{t-2s}\right)^2= Q^2.}
Since $\varrho_k$'s are characteristic functions, we also have
\eqlabel{Qestimate2}{\displaystyle\sum_{k=1}^M \int \varrho_k(b)^2d\nu_{t-2s}(b)=\displaystyle\sum_{k=1}^M \int \varrho_k(b)d\nu_{t-2s}(b)=Q.}
Combining \eqref{Qestimate0}, \eqref{Qestimate1}, and \eqref{Qestimate2},
\eq{\begin{aligned}
    \displaystyle\sum_{k\neq l}\int \varrho_k\varrho_l d\nu_{t-2s}&=\int \varrho(b)^2 d\nu_{t-2s}(b)-\displaystyle\sum_{k=1}^M \int \varrho_k(b)^2d\nu_{t-2s}(b)\\&\geq Q^2-Q\gg \eps^{-100d^2}.
\end{aligned}}
The number of the pairs in $\set{1,\cdots,M}^2$ is $M^2\ll \eps^{-200d^2}$, hence we can find a pair $(k,l)$ with $k\neq l$ such that
$\int \varrho_k\varrho_l d\nu_{t-2s}\gg \eps^{100d^2}$, i.e.
$$ \nu_{t-2s}\left( (\widetilde{p}_{j_k}+h_{+,j_k}\cT)\cap (\widetilde{p}_{j_l}+h_{+,j_l}\cT) \right)\gg \eps^{100d^2}.$$
By Lemma \ref{IntersectionLemma}, the intersection is contained in a ball of radius $\ll \tau^{-1}e^{-2ns}=\eps^{-200d^3}e^{-2ns}\leq e^{-s}$. Thus, there exists a point $p\in\bT^d$ such that
$$\nu_{t-2s}\big(B^{\bT^d}(p,e^{-s})\big)\gg \eps^{100d^2}=e^{-100d^2\kappa_4 s}\geq e^{-\frac{\kappa_3}{20d}s}.$$

\end{proof}

\subsection{Low concentration of $\nu_{t-2s}$ from a Diophantine condition}
The goal of this subsection is to show that if $b_0\in\bT^d$ is not approximated by a rational number with a small denominator, then the concentration of any ball in $\bT^d$ is low with respect to $\nu_{t-2s}=\nu_{y_0,t-2s}$, where $y_0=g_0w(b_0)\hat{\Gamma}$.
Recall the definition
$$\zeta(b,T):=\min\set{N\in\bN: \displaystyle\min_{1\leq q\leq N}\|qb\|_\bZ\leq \frac{N^2}{T}}$$
for $b\in\bR^d$ and $T>0$.

\begin{lem}\label{count}
Let $\Xi$ be an interval in $\bT$ of length $2\rho$. For $b\in\bT^d$, if $\zeta(b,R)\ge \rho^{-2d}$, then
$$\left|\set{\bm\in B^{\bZ^d}(0,R): b\cdot\bm\in \Xi}\right|\leq C_{21}\rho|B^{\bZ^d}(0,R)|$$
for some $C_{21}>1$.
\end{lem}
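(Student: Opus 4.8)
The plan is to reduce the $d$-dimensional counting problem to a one-dimensional equidistribution statement for the irrational rotation $k \mapsto k\,(b\cdot \mathbf{v})$ on $\bT$, for suitable primitive directions $\mathbf{v}$, and then apply an effective Weyl criterion whose error is controlled by the Diophantine quantity $\zeta(b,R)$. Concretely, I would slice $B^{\bZ^d}(0,R)$ along lines parallel to a well-chosen coordinate direction (or, more robustly, decompose $\bZ^d\cap B(0,R)$ into arithmetic progressions $\{\mathbf{m}_0+k\mathbf{e}_i : |k|\le R\}$ of length $\asymp R$). On such a progression, the condition $b\cdot\mathbf{m}\in\Xi$ becomes $kb_i \in \Xi - b\cdot\mathbf{m}_0 \pmod 1$, i.e.\ a shifted interval of length $2\rho$ hit by the rotation orbit $\{kb_i\}_{|k|\le R}$.

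The key step is an effective version of Weyl's equidistribution theorem on $\bT$: if $\alpha\in\bT$ satisfies a quantitative irrationality bound, then for any interval $\Xi'\subset\bT$ of length $2\rho$ and any $L$,
\[
  \bigl|\{k : |k|\le L,\ k\alpha\in\Xi'\}\bigr| = 2L\cdot(2\rho) + (\text{error}),
\]
with the error bounded by $O(\rho^{-1} + \min_{1\le q\le Q}\|q\alpha\|_{\bZ}^{-1})$-type quantities coming from truncating the Fourier expansion of $\mathbf{1}_{\Xi'}$ at level $Q\asymp \rho^{-1}$ and bounding the geometric sums $\sum_{|k|\le L} e(qk\alpha)$ by $\min(L, \|q\alpha\|_{\bZ}^{-1})$. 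The role of the hypothesis $\zeta(b,R)\ge \rho^{-2d}$ is precisely to guarantee that the relevant $\alpha = b\cdot\mathbf v$ has no good rational approximation with denominator below roughly $\rho^{-2d}$: by the definition of $\zeta$, $\zeta(b,R)\ge \rho^{-2d}$ means $\min_{1\le q<\rho^{-2d}}\|qb\|_{\bZ} > \tfrac{(\rho^{-2d})^2}{R}$, hence (using $\rho$ polynomially related to a power of $R$ that is absorbed into the exponent — which is why $2d$ rather than $d$ appears) each $\|q(b\cdot\mathbf v)\|_{\bZ}$ for $1\le q\le \rho^{-1}$ stays bounded below by a positive power of $\rho$, making $\sum_{1\le q\le \rho^{-1}} q^{-1}\|q\alpha\|_{\bZ}^{-1}$ small compared to $\rho R$. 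Summing the per-progression count $\asymp \rho R$ over the $\asymp R^{d-1}$ progressions gives $\asymp \rho R^d \asymp \rho |B^{\bZ^d}(0,R)|$, which is the claim.

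One technical point to handle carefully: for a generic direction $\mathbf v$, the inner product $b\cdot\mathbf v$ need not itself be badly approximable even if $b$ is; this is why it is cleaner to use coordinate directions $\mathbf e_i$ (so $\alpha = b_i$) and note that $\zeta(b,R)\ge \rho^{-2d}$ forces \emph{every} coordinate $b_i$ (indeed every $b\cdot\mathbf v$ with $\|\mathbf v\|$ not too large, via $\|q(b\cdot\mathbf v)\|_\bZ \le \|\mathbf v\|_1 \|qb\|_\bZ$ in one direction and a pigeonhole/Dirichlet argument in the other) to inherit a quantitative irrationality bound; alternatively one fixes a single coordinate $i$ for which $b_i$ is ``most irrational'' and foliates along $\mathbf e_i$.

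\medskip\noindent\textbf{Main obstacle.} The delicate part is the bookkeeping of exponents: one must choose the truncation level $Q\asymp \rho^{-1}$ and track how the lower bound $\min_{1\le q\le Q}\|q\alpha\|_{\bZ}\gg \rho^{c}$ that follows from $\zeta(b,R)\ge\rho^{-2d}$ interacts with the $\sum q^{-1}$ factor and the main term $\rho R^d$, verifying that the error is genuinely of smaller order for an absolute implied constant $C_{25}$. The exponent $2d$ in the hypothesis is presumably exactly what makes this margin work, and getting that inequality chain to close — rather than any conceptual difficulty — is where the real care is needed.
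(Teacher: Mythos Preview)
Your overall strategy---foliate $B^{\bZ^d}(0,R)$ along a coordinate direction $\be_i$, reduce to counting $\{k: kb_i\in\Xi'\}$ for a shifted interval $\Xi'$, and apply an effective Weyl bound---is exactly the paper's approach. The effective Weyl inequality the paper uses (its Lemma~B.1) is
\[
\frac{1}{2R}\bigl|\{k\in B^{\bZ}(0,R): kb_i\in\Xi'\}\bigr|\ll |\Xi'|+|\Xi'|^{-1}\zeta(b_i,R)^{-1},
\]
and once one knows $\zeta(b_i,R)\ge\rho^{-2}$ for the chosen coordinate, the right-hand side is $\ll\rho$ immediately; summing over the $\asymp R^{d-1}$ slices finishes. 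So the ``bookkeeping of exponents'' you flag as the main obstacle is in fact trivial.

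The real gap in your sketch is the reduction from $\zeta(b,R)\ge\rho^{-2d}$ to a usable lower bound on some $\zeta(b_i,R)$. Your first suggestion, that \emph{every} coordinate inherits a quantitative irrationality bound, is false: take $b=(b_1,0,\dots,0)$ with $b_1$ badly approximable; then $\zeta(b,R)=\zeta(b_1,R)$ can be arbitrarily large while $b_2,\dots,b_d$ are rational. The inequality $\|q(b\cdot\mathbf v)\|_\bZ\le\|\mathbf v\|_1\|qb\|_\bZ$ you cite goes the wrong way for this purpose. Your second suggestion (pick the ``most irrational'' coordinate) is the right one, but you do not say how. The paper's device is the product inequality
\[
\zeta(b,R)\le\prod_{j=1}^d\zeta(b_j,R),
\]
proved by taking for each $j$ a $q_j\le\zeta(b_j,R)$ with $\|q_jb_j\|_\bZ\le\zeta(b_j,R)^2/R$ and checking that $q_1\cdots q_d$ witnesses the bound for $b$. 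Pigeonhole then gives some $i$ with $\zeta(b_i,R)\ge\zeta(b,R)^{1/d}\ge\rho^{-2}$, and this is precisely why the hypothesis carries the exponent $2d$.
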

\begin{proof}
Let $b=(b_1,\cdots,b_d)$ and write
$$\zeta(b_i,R):=\min\set{N\in\bN: \displaystyle\min_{1\leq q\leq N}\|qb_i\|_\bZ\leq \frac{N^2}{R}}$$
for each $1\leq i\leq d$.
We first claim $\zeta(b,R)\leq \displaystyle\prod_{j=1}^d\zeta(b_j,R)$. By definition, for each $i$ there exists $1\leq q_i\leq \zeta(b_i,R)$ such that $\|q_ib_i\|_\bZ\leq\frac{\zeta(b_i,R)^2}{R}$. It follows that for any $1\leq i\leq d$, we have
$$\|q_1\cdots q_db_i\|_\bZ\leq \|q_ib_i\|_\bZ\displaystyle\prod_{j\neq i}q_j\leq\frac{\zeta(b_i,R)^2}{R}\displaystyle\prod_{j\neq i}\zeta(b_j,R)\leq\frac{1}{R}\displaystyle\prod_{j=1}^d\zeta(b_j,R)^2,$$
hence $\|q_1\cdots q_db\|_\bZ\leq \frac{1}{R}\displaystyle\prod_{j=1}^d\zeta(b_j,R)^2$. Since $1\leq q_1\cdots q_d\leq \displaystyle\prod_{j=1}^d\zeta(b_j,R)$, the claim follows.

By the claim and the assumption $\zeta(b,R)\geq \rho^{-2d}$, there exists $1\leq i\leq d$ such that $\zeta(b_i,R)\geq \rho^{-2}$. For the convenience of notations, assume that such $i$ is $1$ without loss of generality. Write $b=(b_1,b')\in\bT\times\bT^{d-1}$ and $\bm=(k,\bm')\in\bZ\times\bZ^{d-1}$. The condition $b\cdot\bm\in\Xi$ is equivalent to $kb_1\in\Xi'$, where $\Xi'=\Xi-b'\cdot\bm'$. Thus, it suffices to prove the following statement: for any interval $\Xi'\subset\bT$ of length $2\rho$ and $\zeta(b_1,R)\ge\rho^{-2}$,
\eqlabel{circrot}{|k\in B^{\bZ}(0,R): kb_1\in\Xi'|\ll\rho R.}

One can modify Weyl's famous equidistribution criterion to be an effective version, and obtain the following upper bound of the density of the irrational rotational orbit on $\bT$ (See Lemma \ref{Weyl''} for a detailed computation):
\eqlabel{Weyl}{\frac{1}{2R}\left|\left\{k\in B^{\bZ}(0,R): kb_1\in\Xi'\right\}\right|\ll |\Xi'|+|\Xi'|^{-1}\zeta(b_1,R)^{-1}.}
Under the assumptions $|\Xi'|=|\Xi|=2\rho$ and $\zeta(b_1,R)\ge\rho^{-2}$ it yields \eqref{circrot}, so concludes the proof.
\end{proof}

In the rest of this subsection, the maps $\xi=\xi_{t-2s}:X\times V\to\cF$ and $\gamma=\gamma_{t-2s}:X\times V\to\Gamma$ denote the same maps as in Section 3, but for the parameter $t-2s$ instead of $s$, i.e.
$$a_{t-2s}u\iota(x)=\xi(x,u)\gamma(x,u).$$
The following proposition relates the concentration of the measure $\nu_{t-2s}=\nu_{y_0,t-2s}$ and the Diophantine condition of $y_0$.
\begin{prop}\label{lowcon}
Let $0<s<\frac{t}{3}$ and $\rho\ge e^{-\kappa_4 t}$. For $g_0\in G$ and $b_0\in\bT^d$, let $y_0=g_0w(b_0)\hat{\Gamma}\in Y$. Let $x_0=\pi(y_0)\in X$ and assume $\textrm{ht}(x_0)\leq \rho^{-\frac{1}{10d^2(d-1)}}$. For some constants $c_{11},C_{22}>0$, if $\zeta(b_0,\|g_0\|^{-1}e^{n(t-2s)})\ge c_{11}\rho^{-2d}$, then
$$\nu_{t-2s}(B^{\bT^d}(p,\rho))\leq C_{22}\rho^{\frac{\kappa_3}{10d}}$$
for any $p\in\bT^d$.
\end{prop}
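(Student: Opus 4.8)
The plan is to realize $\nu_{t-s}$ as the push-forward of the normalized Haar measure $m_H(V)^{-1}m_H|_V$ under the map $u\mapsto\gamma_{t-s}(x_0,u)\sigma(y_0)$, and then to bound the $m_H$-mass of the set of $u$ sent into a $\rho$-ball by combining the well-separation of $\Gamma^{tr}$-orbits (Proposition \ref{key}, applied with $\bm_0=e_1$ and parameter $t-s$) with the effective Weyl estimate of Lemma \ref{count}. First I would normalize the parametrization: writing $g_0=\iota(x_0)\gamma_0$ with $\gamma_0\in\Gamma$ and using $g\,w(b)=w(gb)\,g$ gives $y_0=\iota(x_0)w(\gamma_0b_0)\hat\Gamma$, so $\sigma(y_0)=\gamma_0b_0$; moreover $\|\gamma_0\|\le C_1C_5\,\textrm{ht}(x_0)^{d-1}\|g_0\|$ by \eqref{norm} and \eqref{iotaht}, and $\gamma_0b_0$ is irrational since $\gamma_0\in\SL_d(\bZ)$. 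By Lemma \ref{siggam} (for parameter $t-s$), $\sigma(a_{t-s}uy_0)=\gamma_{t-s}(x_0,u)\gamma_0b_0$, hence $\nu_{t-s}(B^{\bT^d}(p,\rho))=m_H(V)^{-1}m_H(\{u\in V:\gamma_{t-s}(x_0,u)\gamma_0b_0\in B^{\bT^d}(p,\rho)\})$. One may assume $\rho$ is smaller than a constant depending only on $m,n,V$ (otherwise the bound is trivial since $\nu_{t-s}$ is a probability measure), and put $\eps:=\rho^{1/(5d)}\le\frac{1}{2}$. The definition of $\kappa_5$, together with $\rho\ge e^{-\kappa_5 t}$, $s<t/2$, and $\textrm{ht}(x_0)\le\rho^{-1/(10d^2(d-1))}$, guarantees $\eps>e^{-\kappa_2(t-s)}$ and $\textrm{ht}(x_0)<e^{\kappa_2(t-s)}$, so Lemma \ref{Vxe} and Proposition \ref{key} apply with $x=x_0$, $\bm_0=e_1$, and parameter $t-s$.

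Next I would split $V=V_{x_0,\eps}\cup(V\setminus V_{x_0,\eps})$. The complement contributes at most $m_H(V\setminus V_{x_0,\eps})\le C_{11}\eps^{\kappa_3/2}=C_{11}\rho^{\kappa_3/(10d)}$ by Lemma \ref{Vxe}. On $V_{x_0,\eps}$, set $\bm:=\gamma_{t-s}(x_0,u)^{tr}e_1$; Proposition \ref{key}(1) gives $\bm\in B^{\bZ^d}(0,R)$ with $R=C_{13}\eps^{-1}\textrm{ht}(x_0)^{d-1}e^{n(t-s)}$, and $\gamma_{t-s}(x_0,u)\gamma_0b_0\in B^{\bT^d}(p,\rho)$ forces $\bm\cdot(\gamma_0b_0)$, the first coordinate of $\gamma_{t-s}(x_0,u)\gamma_0b_0$, into a fixed interval $\Xi\subset\bT$ of length $2\rho$. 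To count the admissible $\bm$ I would invoke Lemma \ref{count} with $b=\gamma_0b_0$ and radius $R$; its hypothesis $\zeta(\gamma_0b_0,R)\ge\rho^{-2d}$ follows from \eqref{gammab} applied to $\gamma_0$, the bound $\|\gamma_0^{-1}\|_{\textrm{op}}\le C_3\|\gamma_0\|\le C_1C_3C_5\,\textrm{ht}(x_0)^{d-1}\|g_0\|$ (so that $\|\gamma_0^{-1}\|_{\textrm{op}}^{-1}R\ge(C_{13}/(C_1C_3C_5))\,\eps^{-1}\|g_0\|^{-1}e^{n(t-s)}\ge\|g_0\|^{-1}e^{n(t-s)}$ once $\rho$ is small), and the assumed $\zeta(b_0,\|g_0\|^{-1}e^{n(t-s)})\ge C_{26}\rho^{-2d}$ with $C_{26}\ge1$. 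Lemma \ref{count} then bounds the number of admissible $\bm$ by $C_{25}\rho\,|B^{\bZ^d}(0,R)|\ll\rho R^d$, while Proposition \ref{key}(2) gives $m_H(\{u\in V_{x_0,\eps}:\gamma_{t-s}(x_0,u)^{tr}e_1=\bm\})\le C_{14}\eps^{-3n}e^{-dn(t-s)}$ for each such $\bm$; hence the $V_{x_0,\eps}$-part of the mass is $\ll\rho R^d\eps^{-3n}e^{-dn(t-s)}\ll\rho\,\eps^{-(d+3n)}\textrm{ht}(x_0)^{d(d-1)}$.

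Finally I would insert $\eps^{-(d+3n)}=\rho^{-(d+3n)/(5d)}$ and $\textrm{ht}(x_0)^{d(d-1)}\le\rho^{-1/(10d)}$, obtaining the $V_{x_0,\eps}$-part $\ll\rho^{\,1-(d+3n)/(5d)-1/(10d)}$; since $n\le d-1$ one has $(d+3n)/(5d)<4/5$, so this exponent exceeds $\frac{1}{5}-\frac{1}{10d}\ge\frac{3}{20}>\frac{1}{20}\ge\frac{\kappa_3}{10d}$ for $d\ge2$ and $\kappa_3\le1$. Thus both pieces are $\ll\rho^{\kappa_3/(10d)}$, which after enlarging the constant to cover the bounded range of $\rho$ gives the claim for suitable $C_{26},C_{27}$. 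The step I expect to be the main obstacle is the one in the previous paragraph: transferring the Diophantine lower bound on $b_0$ to $\gamma_0b_0$ at the scale $R$ through the fundamental-domain representative $\gamma_0$ (which costs a power of $\textrm{ht}(x_0)$ via \eqref{gammab}), while simultaneously keeping the counting error, the cusp error $\eps^{\kappa_3/2}$, and the height loss $\textrm{ht}(x_0)^{d(d-1)}$ all below $\rho^{\kappa_3/(10d)}$ — this balancing is precisely what dictates the exponent $\rho^{-2d}$ in the hypothesis and forces $\kappa_5$ and the auxiliary powers to be taken as small as they are.
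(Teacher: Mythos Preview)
Your proposal is correct and follows essentially the same route as the paper's proof: normalize via $g_0=\iota(x_0)\gamma_0$ so that $\sigma(y_0)=\gamma_0b_0$, project to the first coordinate, set $\eps=\rho^{1/(5d)}$, split $V$ by Lemma~\ref{Vxe}, apply Proposition~\ref{key} with $\bm_0=e_1$ at time $t-s$, and count admissible $\bm$ via Lemma~\ref{count} after transferring the Diophantine bound from $b_0$ to $\gamma_0b_0$ through \eqref{gammab}. The only cosmetic difference is that the paper absorbs the constants in the $\zeta$-transfer step via \eqref{rescaling} into the explicit choice $C_{26}=\max(C_1^{1/2}C_5^{1/2}C_{13}^{-1/2},1)$, whereas you use the factor $\eps^{-1}$ in $R$ to swallow them for small $\rho$.
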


\begin{proof}
Let $\gamma_0$ be the element of $\Gamma$ such that $g_0=\iota(x_0)\gamma_0$. Then $y_0$ can be expressed by $y_0=g_0w(b_0)\hat{\Gamma}=\iota(x_0)w(\gamma_0b_0)\hat{\Gamma}$. Let $b=\gamma_0b_0$, then $\sigma(y_0)=\gamma_0b_0=b$. Denote by $\Theta:\bT^d\to\bT$ the projection onto the first coordinate. Let $\Xi=\Theta(B^{\bT^d}(p,\rho))$, then $\Xi\subseteq\bT$ is an interval of length $2\rho$. By the definitions of $\nu_{t-2s}$ and $\mu_{t-2s}$, we have
\eqlabel{xiset}{\nu_{t-2s}(\Theta^{-1}(\Xi))=\frac{1}{m_H(V)}m_H(\set{u\in V: \sigma(a_{t-2s}uy_0)\in\Theta^{-1}(\Xi)}).}
By Lemma \ref{siggam} with $\xi=\xi_{t-2s}$ and 
$\gamma=\gamma_{t-2s}$, $$\sigma(a_{t-2s}uy_0)=\gamma(x_0,u)\sigma(y_0)=\gamma(x_0,u)b$$
holds. Hence, the set in \eqref{xiset} can be expressed as
\eqlabel{xiset2}{\begin{aligned}
    \set{u\in V: \sigma(a_{t-2s}uy_0)\in\Theta^{-1}(\Xi)}
    &=\set{u\in V: e_1\cdot\sigma(a_{t-2s}uy_0)\in\Xi}\\
    &=\set{u\in V: \gamma(x_0,u)^{tr}e_1\cdot b\in\Xi}.
\end{aligned}}
Let $\eps=\rho^{\frac{1}{5d}}$ and $V_{x_0,\eps}$ be the set as in Lemma \ref{Vxe}. Note that the conditions $\eps=\rho^{\frac{1}{5d}}>e^{-\kappa_2(t-2s)}$ and $\textrm{ht}(x_0)<e^{\kappa_2(t-2s)}$ of Lemma \ref{Vxe} and Proposition \ref{key} for $\xi_{t-2s}$ and $\gamma_{t-2s}$ are satisfied since $\kappa_4\leq\frac{\kappa_2}{2000d^3}$. Then 
\eqlabel{estVxe}{m_H(V\setminus V_{x_0,\eps})\leq C_{13}\eps m_H(V)} by Lemma \ref{Vxe}. We now apply Proposition \ref{key} for $t-2s$ and $\bm_0=e_1$. Let $R=C_{15}\eps^{-1}\textrm{ht}(x_0)^{d-1}e^{n(t-2s)}$ as in Proposition \ref{key}. Denoting $\Omega:=\set{\bm\in B^{\bZ^d}(0,R): b\cdot\bm\in\Xi}$ and applying Proposition \ref{key}, we have
\eqlabel{xiset3}{\set{u\in V: \gamma(x_0,u)^{tr}e_1\cdot b\in\Xi}\subseteq (V\setminus V_{x,\eps})\cup\displaystyle\bigcup_{\bm\in\Omega}\set{u\in V_{x_0,\eps}:\gamma(x_0,u)e_1=\bm},}
\eqlabel{xiest}{m_H(\set{u\in V_{x_0,\eps}:\gamma(x_0,u)e_1=\bm})\leq C_{16}\eps^{-3n}e^{-dn(t-2s)}}
for any $\bm\in\bZ^d$. We also have
\eqlabel{Omega}{|\Omega|\leq C_{21}\rho|B^{\bZ^d}(0,R)|\ll \rho\eps^{-d}\textrm{ht}(x_0)^{d(d-1)}e^{dn(t-2s)}}
by Lemma \ref{count}. Let $c_{11}:=\max(C_1^{\frac{1}{2}}C_6^{\frac{1}{2}}C_{15}^{-\frac{1}{2}},1)$. Using \eqref{rescaling},\eqref{gammab},\eqref{norm}, and \eqref{opnorm}, the assumption $\zeta(b_0,\|g_0\|^{-1}e^{n(t-2s)})\ge c_{11}\rho^{-2d}$ implies
\eqlabel{zetacond}{\begin{aligned}
    \zeta(b,R)&\ge\zeta(b_0,\|\gamma_0\|^{-1}R)\ge\zeta(b_0,C_1^{-1}\|g_0\|^{-1}\|\iota(x_0)\|^{-1}R)\\
    &\ge\zeta(b_0, C_1^{-1}C_6^{-1}\|g_0\|^{-1}\textrm{ht}(x_0)^{-(d-1)}R)\\
    &\ge \min(C_1^{-\frac{1}{2}}C_6^{-\frac{1}{2}}C_{15}^{\frac{1}{2}},1)\zeta(b_0, \|g_0\|^{-1}e^{n(t-2s)})\ge \rho^{-2d},
\end{aligned}}
 so the condition of Lemma \ref{count} is satisfied.
Combining \eqref{xiset}-\eqref{Omega},
\eqlabel{xiset4}{
\begin{aligned}
    \nu_{t-2s}(\Theta^{-1}(\Xi))&=\frac{1}{m_H(V)}m_H(\set{u\in V: \gamma(x_0,u)^{tr}e_1\cdot b\in\Xi})\\
    &\leq\frac{m_H(V\setminus V_{x,\eps})}{m_H(V)}+\displaystyle\sum_{\bm\in\Omega}\frac{m_H(\set{u\in V_{x_0,\eps}:\gamma(x_0,u)e_1=\bm})}{m_H(V)}\\
    &\leq C_{13}\eps+C_{16}|\Omega|\eps^{-3n}e^{-dn(t-2s)}\\
    &\ll \eps+\rho\eps^{-(d+3n)}\textrm{ht}(x_0)^{d(d-1)}\ll \rho^{\frac{\kappa_3}{10d}}.
\end{aligned}}
It completes the proof since $B^{\bT^d}(p,\rho)\subseteq \Theta^{-1}(\Xi)$.
\end{proof}

\subsection{Proof of Theorem \ref{mainthm}}
Combining Proposition \ref{highcon} and Proposition \ref{lowcon}, we deduce the following Fourier decay estimate.

\begin{prop}\label{Fourdecay}
Let $t>0$, $\|g_0\|\leq e^{\frac{nt}{4}}$, $b\in\bT^d$, and $y_0=g_0w(b_0)\hat{\Gamma}$. Let $\rho=\min\left(e^{-\kappa_{4} t},c_{11}^{-\frac{1}{2d}}\zeta(b_0,e^{\frac{nt}{2}})^{-\frac{1}{2d}}\right)$ and $r=\rho^{\kappa_4^2}$. Assume $\rho>c_{12}$ and $\textrm{ht}(\pi(y_0))\leq\rho^{-\frac{1}{10d^2(d-1)}}$, where $c_{12}:=c_{10}^{10}C_{22}^{-10}$. Let $z$ and $\om$ be as in Proposition \ref{mainprop}. Then
$$|\widehat{\nu_{t,\om}}(\bm_0)|<c_{3}\rho^{\kappa_4^2}$$
for any $0<\|\bm_0\|<\rho^{-\kappa_4}$.
\end{prop}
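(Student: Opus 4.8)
The plan is to argue by contradiction, playing the lower bound on a ball‑mass coming from Proposition~\ref{highcon} against the upper bound coming from Proposition~\ref{lowcon}. We may assume $\rho$ is as small as the constraints below require: if $C_{19}\rho^{\kappa_5/n}\ge 1$ the claim is trivial, since Fourier coefficients of probability measures are bounded by $1$, and this smallness (which is what the hypothesis involving $C_{28}$ encodes) will in particular secure $s>C_{15}$ below. So suppose $|\widehat{\nu_{t,\om}}(\bm_0)|\ge C_{19}\rho^{\kappa_5/n}$ for some $\bm_0$ with $0<\|\bm_0\|<\rho^{-\kappa_3/(480d^2)}$, and introduce the auxiliary time $s>0$ fixed by $e^{-ns}=\rho$. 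This choice is designed so that $e^{-\kappa_5 s}=\rho^{\kappa_5/n}=r$ is exactly the regularization radius of Proposition~\ref{highcon}, and so that the ball $B^{\bT^d}(p,e^{-ns})$ produced by that proposition has radius exactly $\rho$, matching the radius in Proposition~\ref{lowcon}.

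First I would check admissibility of $s$ and all side conditions. From $\rho\ge e^{-\kappa_5 t}$ one reads off $ns=-\log\rho\le\kappa_5 t$, hence $s\le\kappa_5 t/n$; since $\kappa_5$ is small this gives $s\le\kappa_5 t$ and $s<t/2$ simultaneously, while the smallness of $\rho$ gives $s>C_{15}$ and $r\le\tfrac12$. The hypothesis $\textrm{ht}(\pi(y_0))\le\rho^{-1/(10d^2(d-1))}$ together with $\rho\ge e^{-\kappa_5 t}$ and the smallness of $\kappa_5$ yields $\textrm{ht}(\pi(y_0))<e^{\kappa_2 t/2}$, so the standing assumption on $y_0$ fixed before Proposition~\ref{mainprop} is in force; and $\|\bm_0\|<\rho^{-\kappa_3/(480d^2)}=e^{\frac{\kappa_3}{480d^2}ns}$ is exactly the constraint on $\bm_0$ needed in Proposition~\ref{highcon}. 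With $z,\om$ as given and $|\widehat{\nu_{t,\om}}(\bm_0)|\ge C_{19}e^{-\kappa_5 s}$, Proposition~\ref{highcon} then produces $p\in\bT^d$ with
\[
\nu_{t-s}\big(B^{\bT^d}(p,\rho)\big)=\nu_{t-s}\big(B^{\bT^d}(p,e^{-ns})\big)>C_{24}\,e^{-\frac{\kappa_3}{20d}ns}=C_{24}\,\rho^{\kappa_3/(20d)}.
\]

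Next I would verify the Diophantine hypothesis of Proposition~\ref{lowcon} for this $\rho$ and the point $p$ just obtained, namely $\zeta(b_0,\|g_0\|^{-1}e^{n(t-s)})\ge C_{26}\rho^{-2d}$. This is where $\|g_0\|\le e^{nt/4}$ and the smallness of $s$ enter: they force $\|g_0\|^{-1}e^{n(t-s)}\ge e^{n(3t/4-s)}\ge e^{nt/2}$, so monotonicity of $\zeta$ in its second argument — supplemented by the rescaling inequality \eqref{rescaling} to absorb constants, and, if necessary, enlarging $\rho$ by a fixed factor, which only changes $C_{27},C_{28}$ — converts the defining relation $\rho\ge C_{26}^{-1/(2d)}\zeta(b_0,e^{nt/2})^{-1/(2d)}$ into the required bound. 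The remaining hypotheses of Proposition~\ref{lowcon} ($0<s<t/2$, $\rho\ge e^{-\kappa_5 t}$, the height bound) have already been checked, so it gives $\nu_{t-s}\big(B^{\bT^d}(p,\rho)\big)\le C_{27}\,\rho^{\kappa_3/(10d)}$. Comparing with the previous display yields $C_{24}\rho^{\kappa_3/(20d)}<C_{27}\rho^{\kappa_3/(10d)}$, i.e.\ $\rho^{\kappa_3/(20d)}>C_{24}/C_{27}$; since $C_{28}$ dominates $C_{24}^{10}C_{27}^{-10}$, the smallness of $\rho$ contradicts this, and therefore $|\widehat{\nu_{t,\om}}(\bm_0)|<C_{19}\rho^{\kappa_5/n}$, as claimed.

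I expect the only genuinely delicate point to be the Diophantine bookkeeping in that middle step — reconciling the hypothesis of Proposition~\ref{lowcon}, phrased via $\zeta(b_0,\|g_0\|^{-1}e^{n(t-s)})$, with the definition of $\rho$, phrased via $\zeta(b_0,e^{nt/2})$. This is exactly where the growth constraint $\|g_0\|\le e^{nt/4}$, the smallness of $s$ (hence of $\kappa_5$), and the precise shape of $C_{28}$ are needed, and where one must track exponents and multiplicative constants with care. Everything else is routine propagation of the window $C_{15}<s\le\kappa_5 t$ and of the cusp/height estimates through Propositions~\ref{highcon} and~\ref{lowcon}; no new analytic idea is required here, since the substance is already contained in those two statements (and, underneath them, in Propositions~\ref{mainprop} and~\ref{key} together with the quantitative Wiener lemma, Proposition~\ref{BFLM}).
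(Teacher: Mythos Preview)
Your proposal is correct and follows essentially the same approach as the paper: set $s$ via $e^{-ns}=\rho$, verify the side conditions for Propositions~\ref{highcon} and~\ref{lowcon}, and derive a contradiction by comparing the resulting lower and upper bounds $C_{24}\rho^{\kappa_3/(20d)}$ and $C_{27}\rho^{\kappa_3/(10d)}$ on $\nu_{t-s}(B^{\bT^d}(p,\rho))$. Your remark that the only delicate point is reconciling $\zeta(b_0,\|g_0\|^{-1}e^{n(t-s)})$ with $\zeta(b_0,e^{nt/2})$ via $\|g_0\|\le e^{nt/4}$ and $s\le t/4$ is exactly right, and the paper handles it the same way (and with the same looseness on the constant $C_{26}$).
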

\begin{proof}
This is a direct consequence of Proposition \ref{highcon} and \ref{lowcon}, so it suffices to check the conditions in the propositions.
Set $s=-\log\rho$ so that $\rho=e^{-s}$, then $s\leq \kappa_4 t$. Since $\|g_0\|\leq e^{\frac{nt}{4}}$, $\|g_0\|^{-1}e^{n(t-2s)}\ge e^{\frac{nt}{2}}$, hence the condition $\zeta(b_0,\|g_0\|^{-1}e^{n(t-2s)})\ge c_{11}\rho^{-2d}$ is satisfied. It follows from Proposition \ref{lowcon} that
\eqlabel{high}{\nu_{t-2s}(B^{\bT^d}(p,\rho))\leq C_{22}\rho^{\frac{\kappa_3}{10d}}}
for any $p\in\bT^d$. Suppose that there exists some $0<\|\bm_0\|<\rho^{-\kappa_4}=e^{\kappa_4s}$ with $|\widehat{\nu_{t,\om}}(\bm_0)|\ge c_{3}\rho^{\kappa_4^2}=c_{3}e^{-\kappa_4^2 s}$ for contradiction. For sufficiently small $\rho$, it follows from Proposition \ref{highcon} that
\eqlabel{low}{\nu_{t-2s}(B^{\bT^d}(p,\rho))> c_{10}\rho^{\frac{\kappa_3}{20d}}\ge C_{22}\rho^{\frac{\kappa_3}{10d}},}
which contradicts \eqref{high}.
\end{proof}

We now prove Theorem \ref{mainthm}.

\begin{proof}[Proof of Theorem \ref{mainthm}]
We fix $y_0=g_0w(b_0)\hat{\Gamma}\in Y$ with $b_0\in\bT^d\setminus\bQ^d$ and denote $\mu_t=\mu_{y_0,t}$ the same as we did. To prove Theorem \ref{mainthm}, it suffices to find a constant $\del'>0$ such that if $\|g_0\|\leq\zeta(b_0,e^{\frac{nt}{2}})^{-\del'}$, then
$$\mu_t(f)=m_Y(f)+O(\cS(f)\zeta(b_0,e^{\frac{nt}{2}})^{-\del'})$$
for any $t\ge 0$, $f\in C^\infty_c(Y)$.
Recall the notations that $l$ is the degree of the Sobolev norm $\cS$, $\del_0$ is the constant of Theorem \ref{equX}, and $\kappa_1,\cdots,\kappa_4$ are the constants we have chosen throughout the present paper. Given $t>0$, let $$\rho=\min\left(e^{-\kappa_{4} t},c_{11}^{-\frac{1}{2d}}\zeta(b_0,e^{\frac{nt}{2}})^{-\frac{1}{2d}}\right)$$ as in Proposition \ref{Fourdecay}, $\kappa_{5}:=\frac{\kappa_4^2}{3d}$ and $\eps=\rho^{\frac{\kappa_5}{2l(d-1)}}$. We can also take $\del_1>0$ to satisfy $ \zeta(b_0,e^{\frac{nt}{2}})^{\del_1}\leq\min(\rho^{-\frac{1}{10d^2(d-1)}},e^{\frac{\del_0}{2\kappa_1}t})$. The desired constant $\del'$ will be taken to be $0<\del'<\del_1$ so that $\|g_0\|\leq\zeta(b_0,e^{\frac{nt}{2}})^{\del'}$ implies $\textrm{ht}(x_0)\leq \rho^{-\frac{1}{10d^2(d-1)}}$ and $\|g_0\|\leq e^{\frac{nt}{4}}$, which are the conditions in Proposition \ref{Fourdecay}.

We fix a partition of unity $\set{\om_i}_{i\in\cI}$ using Proposition \ref{partition} with a radius $r=\rho^{\kappa_4}$, where $\cI=\set{1,\cdots,N_r}\cup\set{\infty}$. Let $\set{z_1,\cdots,z_{N_r}}\subset K(C_{10}r^{\frac{1}{d}})$ be the corresponding set as in Proposition \ref{partition}. Let $\cI_{int}\subset\cI$ be the set of $i\in\cI$ such that $\iota(z_i)\in\cF(C_9^4r,\eps)$. Since $r^{\kappa_3}\leq \rho^{\kappa_3\kappa_4}\leq \rho^{\kappa_5}\leq\eps^d$, $m_G(\cF(C_9^5r,\frac{\eps}{2}))\ll \eps^d$ by \eqref{Fest}. For any $i\in\cI\setminus\cI_{int}$, each $\iota(\sB_r(z_i))$ is in $\cF\setminus\cF(C_9^5r,\frac{\eps}{2})$ and has $m_G$-measure $\asymp r^{d^2-1}$. Thus, $|\cI\setminus\cI_{int}|\ll \eps^dr^{-(d^2-1)}$ since the $\iota(\sB_r(z_i))$'s are disjoint.

For any $i\in\cI_{int}$, $z_i$ and $\om_i$ satisfy the conditions $\iota(z_i)\in\cF(C_9^4r,C_{10}r^{\frac{1}{d}})$, $\mathds{1}_{\sB_r(z_i)}\leq\om_i\leq\mathds{1}_{\sB_{C_9^3r}(z_i)}$, and $\|\triangledown\om_i\|_{L^\infty(X)}\leq C_{11}r^{-1}$, which we have assumed in the previous propositions. Write $\mu_{t,i}:=\mu_{t,\om_i}$ and $\nu_{t,i}:=\nu_{t,\om_i}$ for simplicity, where $\mu_{t,\om_i}$ and $\nu_{t,\om_i}$ is defined as \eqref{omdef} and \eqref{nudef}. 
 
 Recall that we defined $\overline{f}\in C^\infty_c(X)$ by $\overline{f}(x)=\int_{\pi^{-1}(x)}f(y)dm_{\pi^{-1}(x)}(y)$ for $x\in X$, $f\in C^\infty_c(Y)$. Let us define  $h\in C^\infty_{c}(Y)$ by $h(y)=f(y)-\overline{f}(\pi(y))$ for $y\in Y$. Note that $$\cS(h)\ll\cS(f),\quad\|h\|_{L^{\infty}(Y)}\ll\|f\|_{L^{\infty}(Y)},\quad|h(gy)-h(y)|\ll r\cS(f)$$
for $g\in B^G(\operatorname{id},r)$ by definition. We also have 
\eqlabel{intzero}{\int_{\pi^{-1}(x)}h(y)dm_{\pi^{-1}(x)}(y)=0}
for any $x\in X$. We first decompose $\mu_t(f)$ as
\eqlabel{decom1}{\mu_t(f)=\mu_t(\overline{f}\circ\pi)+\mu_t(h)=\pi_*\mu_t(\overline{f})+\mu_t(h).}
By \eqref{equX'} and $\textrm{ht}(x_0)\ll\|g_0\|\leq\zeta(b_0,e^{\frac{nt}{2}})^{\del'}\leq e^{\frac{\del_0}{2\kappa_1}t}$, we have
\eqlabel{equX''}{
\begin{aligned}
    \pi_*\mu_t(\overline{f})&=m_X(\overline{f})+O(\textrm{ht}(x_0)^{\kappa_1}\cS^X(\overline{f})e^{-\del_0t})\\
    &=m_Y(f)+O(\textrm{ht}(x_0)^{\kappa_1}\cS(f)e^{-\del_0t})\\
    &=m_Y(f)+O(\cS(f)e^{-\frac{\del_0t}{2}}),
\end{aligned}}
Split the second term of \eqref{decom1} by 
\eqlabel{decom2}{
\begin{aligned}
    \mu_t(h)&=\displaystyle\sum_{i\in\cI}\pi_*\mu_t(\om_i)\mu_{t,i}(h)\\
    &=\displaystyle\sum_{i\in\cI\setminus\cI_{int}}\pi_*\mu_t(\om_i)\mu_{t,i}(h)+\displaystyle\sum_{i\in\cI_{int}}\pi_*\mu_t(\om_i)\mu_{t,i}(h).
\end{aligned}}
By \eqref{ommsr} and a trivial bound $\mu_{t,i}(h)\leq\|h\|_{L^{\infty}(Y)}\ll\cS(f)$,
\eqlabel{bdd1}{\displaystyle\sum_{i\in\cI\setminus\cI_{int}}\pi_*\mu_t(\om_i)\mu_{t,i}(h)\ll |\cI\setminus\cI_{int}|\cS(f)r^{(d^2-1)}\ll \cS(f)\eps^d=\cS(f)\rho^{\frac{d\kappa_5}{2l(d-1)}}.}

For each $i\in\cI_{int}$, denote by $\vartheta_i:\bT^d\to \pi^{-1}(z_i)$ the bijective Lipschitz function defined by $\vartheta_i(b):=\iota(z_i)w(b)\hat{\Gamma}$. Note that $\vartheta_i\circ\sigma=\operatorname{id}_{\pi^{-1}(z_i)}$ and the Lipschitz constant of $\vartheta_i$ is bounded above by $\|\iota(z_i)\|\leq C_6\textrm{ht}(z_i)^{d-1}$ and below by $\|\iota(z_i)\|^{-1}\ge C_6^{-1}\textrm{ht}(z_i)^{-(d-1)}$. Let $h_i:=h\circ\vartheta_i\in C^\infty_c(\bT^d)$. 

For any $i\in\cI_{int}$, $\supp \mu_{t,i}\subseteq \pi^{-1}\big(\sB_{C_9^3r}(z_i)\big)$ and $\iota\big(\sB_{C_9^3r}(z_i)\big)\subseteq \cF$. It follows that $\bd^G(g,\iota(z_i))<C_9^3r$ for any $g\in (\supp \om_i\circ\phi)$, so
\eqlabel{distance}{|h(gw(b)\hat{\Gamma})-h(\iota(z_i)w(b)\hat{\Gamma})|\ll r\cS(f),}
i.e. $|h(y)-h_i(\sigma(y))|\ll r\cS(f)$ for any $y\in\supp\mu_{t,i}$. Hence,
\eqlabel{happrox}{\mu_{t,i}(h)=\mu_{t,i}(h_i\circ\sigma)+O(r\cS(f))=\nu_{t,i}(h_i)+O(r\cS(f)).}

For each $h_i\in C^\infty_c(\bT^d)$ and $b\in\bT^d$, we consider the Fourier expansion
\eqlabel{Fourierex}{h_i(b)=\displaystyle\sum_{\bm\in\bZ^d}\widehat{h_i}(\bm)e^{-2\pi i\bm\cdot b}.}
By \eqref{intzero}, the Fourier coefficient at zero vanishes as
\eqlabel{Fzero}{\widehat{h_i}(0)=\int_{\bT^d}h(\iota(z_i)w(b)\hat{\Gamma})dm_{\bT^d}(b)=0.}

The smoothness of $h$ provides a decay of nonzero Fourier coefficients. Let $\cS^{\bT^d}$ be a $l$-th degree Sobolev norm on $\bT^d$ as $\cS^Y$. Then $\cS^{\bT^d}(h_i)$ is bounded above by $\cS^{\bT^d}(h\circ\vartheta_i)\ll \textrm{ht}(z_i)^{l(d-1)}\cS(h)$. Hence, for any $\bm\in\bZ^d\setminus\set{0}$,
\eqlabel{Fnonzero}{|\widehat{h_i}(\bm)|\leq \cS^{\bT^d}(h_i)\|\bm\|^{-(d+2)}\ll \textrm{ht}(z_i)^{l(d-1)}\cS(f)\|\bm\|^{-(d+2)}}
since $\cS$ is a $l$-th degree Sobolev norm and it controls the $L^\infty$ norm of $(d+2)$-th derivatives. 

On the other hand, we obtained an upper bound of $|\widehat{\nu_{t,i}}(\bm)|$ in Proposition \ref{Fourdecay}. Since $\kappa_{5}=\frac{\kappa_4^2}{3d}$, $\rho^{\kappa_4^2}=\rho^{3d\kappa_5}$ and $\rho^{-2\kappa_5}\leq \rho^{-\kappa_4}$. By Proposition \ref{Fourdecay},
\eqlabel{Fourdecay'}{|\widehat{\nu_{t,i}}(\bm)|<c_{3}\rho^{3d\kappa_5}}
for any $0<\|\bm\|<\rho^{-2\kappa_5}$. Expand $\nu_{t,i}(h)$ as
\eqlabel{decomp3}{
\begin{aligned}
    |\nu_{t,i}(h)|&=\left|\displaystyle\sum_{\bm\in\bZ^d\setminus\set{0}}\widehat{h_i}(\bm)\widehat{\nu_{t,i}}(\bm)\right|\\
    &\leq\displaystyle\sum_{0<\|\bm\|<\rho^{-2\kappa_5}}|\widehat{h_i}(\bm)||\widehat{\nu_{t,i}}(\bm)|+\displaystyle\sum_{\|\bm\|\ge\rho^{-2\kappa_5}}|\widehat{h_i}(\bm)||\widehat{\nu_{t,i}}(\bm)|.
\end{aligned}}

Then the first term in the last line is $\ll (\rho^{-2\kappa_5})^d\cS(f)\rho^{3d\kappa_5}\ll \rho^{d\kappa_5}\cS(f)$, using \eqref{Fourdecay'} and a trivial bound $|\widehat{h_i}(\bm)|\leq\|h\|_{L^{\infty}(Y)}\ll\cS(f)$. Using \eqref{Fnonzero} and a trivial bound $|\widehat{\nu_{t,k}}(\bm)|\leq 1$, the second term in the last line is bounded by $$\textrm{ht}(z_i)^{l(d-1)}\cS(f)\displaystyle\sum_{\|\bm\|\ge\rho^{-2\kappa_5}}\|\bm\|^{-(d+2)}\ll\textrm{ht}(z_i)^{l(d-1)}\cS(f)\rho^{2\kappa_5}.$$ Hence, we have
\eqlabel{bound1}{|\nu_{t,i}(h)|\ll \textrm{ht}(z_i)^{l(d-1)}\cS(f)\rho^{2\kappa_5}\leq \cS(f)\rho^{\kappa_5}.}
since $\textrm{ht}(z_i)\leq\eps^{-1}=\rho^{-\frac{\kappa_5}{2l(d-1)}}$. Taking summation over $i\in\cI_{int}$ and using \eqref{happrox} and \eqref{bound1},
\eqlabel{bdd2}{
\begin{aligned}
\left|\displaystyle\sum_{i\in\cI_{int}}\pi_*\mu_t(\om_i)\mu_{t,i}(h)\right|&= \left|\displaystyle\sum_{i\in\cI_{int}}\pi_*\mu_t(\om_i)\nu_{t,i}(h_i)\right|+O(r\cS(f))\\
&\leq\displaystyle\sum_{i\in\cI_{int}}\pi_*\mu_t(\om_i)\cS(f)\rho^{\kappa_5}+O(r\cS(f))\\
&=O((\rho^{\kappa_5}+\rho^{\kappa_4})\cS(f))=O(\rho^{\kappa_5}\cS(f)).
\end{aligned}
}

Combining \eqref{decom1}-\eqref{bdd1} and \eqref{bdd2}, we obtain the estimate
$$\mu_t(f)=m_Y(f)+O(\cS(f)(e^{-\frac{\del_0t}{2}}+\rho^{\frac{d\kappa_5}{2l(d-1)}})).$$
We note that $\zeta(b_0,e^{\frac{nt}{2}})\leq (e^{\frac{nt}{2}})^{\frac{d}{3d+1}}$ by \eqref{Dirichlet} and $\zeta(b_0,e^{\frac{nt}{2}})\ll \rho^2$. Hence we can find $\del'>0$ such that $e^{-\frac{\del_0t}{2}}+\rho^{\frac{d\kappa_5}{2l(d-1)}}\ll\zeta(b_0,e^{\frac{nt}{2}})^{-\del'}$ and $\del'<\del_1$. This completes the proof of the main theorem.
\end{proof}

\section{General diagonal subgroups}\label{gendiag}

In this section, we extend Theorem \ref{mainthm} for general $1$-dimensional diagonal subgroups and give a sketch of the modification. Following the notations of \cite{KW08} and \cite{KM12}, let us denote by $\mathfrak{a}^+$ the set of $d$-tuples $\bt=(t_1,\cdots,t_d)\in\bR^d$ such that
\eq{t_1\ge\cdots\ge t_m>0,\quad t_{d}\ge\cdots\ge t_{m+1}>0, \quad\textrm{and}\quad \displaystyle\sum_{i=1}^m t_i=\displaystyle\sum_{j=1}^n t_{m+j},}
and for $\bt\in\mathfrak{a}^+$ define
\eqlabel{btdef}{a_{\bt}:=\diag{e^{t_1},\cdots,e^{t_m},e^{-t_{m+1}},\cdots,e^{-t_d}}\in G}
and
\eq{\lfloor\bt\rfloor:=\displaystyle\min_{1\leq i\leq d}t_i.}
In general the horospherical subgroup for $a_\bt$ may be bigger than $$H=\set{\left(\begin{matrix} \Id_m& A\\
0&  \Id_n
\end{matrix}\right)\in G: A\in \operatorname{Mat}_{m,n}(\bR)},$$ but we still have an effective equidistribution result for $a_\bt$-translates of $H$-orbits. The following theorem is a reformulation of \cite[Theorem 1.3]{KM12} which is a generalization of Theorem \ref{equX}.
\begin{thm}\label{equXgen}
Let $V\subset H$ be a fixed neighborhood of the identity in $H$ with smooth boundary and compact closure. Then there exist constants $\del_0,\kappa_1>0$ only depending on $d$ so that
\eqlabel{equXgen'}{\frac{1}{m_H(V)}\int_V f(a_\bt ux)dm_H(u)=\int_X dm_X+O\left(\textrm{ht}(x)^{\kappa_1}\cS^X(f)e^{-\del_0\lfloor\bt\rfloor}\right)}
for any $\bt\in\mathfrak{a}^+$, $x\in X$ and $f\in C_c^\infty(X)$. Here, the implied constant depends only on $d$ and $V$.
\end{thm}

The following is a generalization of the main result of this paper, Theorem \ref{mainthm}, for general diagonal elements.

\begin{thm}\label{mainthm'}
Let $V$ be as in Theorem \ref{equXgen}. Then there exists a constant $\del'>0$ only depending on $m$ and $n$ such that
\eqlabel{equY'}{\frac{1}{m_H(V)}\int_V f(a_\bt uy)dm_H(u)=\int_X dm_Y+O\left(\cS(f)\zeta(b,e^{\frac{\lfloor\bt\rfloor}{2}})^{\del'}\right)}
for any $t\ge 0$, $f\in C_c^\infty(Y)$, and $y=gw(b)\hat{\Gamma}$ with $\|g\|\leq \zeta(b,e^{\frac{\lfloor\bt\rfloor}{2}})^{\del'}$ and $b\in\bT^d$. Here, the implied constant depends only on $m,n$, and $V$.
\end{thm}

We now give a sketch of the proof of Theorem \ref{mainthm'}. The proof is almost the same as the proof of Theorem \ref{mainthm}. We denote by $\mu_\bt=\mu_{y,\bt}\in\crly{P}(Y)$ the normalized probability measure on the orbit $a_\bt Vy$ and decompose $\mu_\bt$ by $\sum_i \pi_*\mu_{\bt,\om_i}(\om_i)\mu_{\bt,\om_i}$ with respect to a partition of unity $\set{\om_i}_{i\in I}$ of $X$, where $\mu_{\bt,\om_i}(f):=\pi_*\mu_\bt(\om_i)^{-1}\int f\om_i\circ\pi d\mu_\bt$. We also write $\nu_\bt:=\sigma_*\mu_\bt\in\crly{P}(\bT^d)$ and $\nu_{\bt,\om_i}:=\sigma_*\mu_{\bt,\om_i}\in\crly{P}(\bT^d)$. As in the proof of Theorem \ref{mainthm}, it suffices to prove that for each $\om_i$, any nontrivial Fourier coefficient $\widehat{\nu_{\bt,\om_i}}(\bm_0)$ is small for $\bm_0\in\bZ^d\setminus\set{0}$ (Proposition \ref{Fourdecay}).

Recall that we combined Proposition \ref{highcon} and Proposition \ref{lowcon} to prove Proposition \ref{Fourdecay}. Proposition \ref{highcon} claimed that if $\widehat{\nu_{t,\om_i}}(\bm_0)$ is large for some small $\bm_0$, then $\nu_{t-s}=\sigma_*\mu_{t-s}=\sigma_*(a_{-s}\mu_t)$ is highly concentrated for any $0<s\leq\kappa_5 t$. For general diagonal elements, we consider $\mu_\bt$ and $\mu_{\bt'}=a_{-s}\mu_\bt$ with $0<s<\frac{\lfloor\bt\rfloor}{d}$, instead of $\mu_t$ and $\mu_{t-s}=a_{-s}\mu_t$. Here we denote by $\bt'\in\mathfrak{a}^+$ the vector $(t_1-ns,\cdots,t_m-ns,t_{m+1}-ms,\cdots,t_d-ms)\in\bR^d$.

We remark that the preliminaries in Section 2 still hold for a general diagonal subgroup. Here we only state the following analog of Proposition \ref{inductive} which holds without any change of the proof. The rest of Section 2 holds by replacing the use of Theorem \ref{equX} with Theorem \ref{equXgen}.
\begin{prop}
For any $0\leq s<\frac{\lfloor\bt\rfloor}{d}$, $u_0\in V$, $y\in Y$, and $f\in L^\infty(Y)$, we have
$$\mu_{y,\bt}(f)=(a_su_0)_*\mu_{y,\bt'}(f)+O(\|f\|_{L^\infty}e^{\lfloor\bt\rfloor-ds}).$$
\end{prop}

Throughout Section 3 and Subsection 4.1, we investigated a relation between $\mu_{y,t}$ and $\mu_{y,t-s}$. We did not use any particular property of the measure $\mu_{y,t-s}$ and only used the fact that $\mu_{y,t-s}$ is approximately the $a_{-s}$-translate of $\mu_{y,t}$. Thus, the arguments in Section 3 and Subsection 4.1 still work for the measures $\mu_{y,\bt}$ and $\mu_{y,\bt'}$ without any modification. Hence, we get the following proposition as we obtained Proposition \ref{highcon}.

\begin{prop}\label{highcon'}
Let $\bt\in\mathfrak{a}^+$, $-\frac{1}{\kappa_4}\log c_{9}<s\leq\kappa_4\frac{\lfloor\bt\rfloor}{d}$, $e^{-\kappa_{4}^2 s}\leq r<c_{9}^{\kappa_4}$, and let $z$ and $\om$ be as in Proposition \ref{mainprop}. If $|\widehat{\nu_{\bt,\om}}(\bm_0)|\ge c_{3}e^{-\kappa_4^2s}$ for some $0<\|\bm_0\|<e^{\kappa_4s}$, then there exists $p\in\bT^d$ with
$$\nu_{\bt'}(B^{\bT^d}(p,e^{-ns}))>c_{10}e^{-\frac{\kappa_3}{20d}ns},$$
where $\nu_{\bt'}=\sigma_*\mu_{\bt'}=\sigma_*(a_{-s}\mu_{\bt})$.
\end{prop}

Now it remains to prove an analogue of Proposition \ref{lowcon}. We made use of the results of Subsection 3.1 for the proof of Proposition \ref{lowcon}, so we first need some modifications of Subsection 3.1. For 
\eq{\begin{aligned}
    \bt'&=(t_1-2ns,\cdots,t_m-2ns,t_{m+1}-2ms,\cdots,t_d-2ms)\\
    &=(t_1',\cdots,t_m',t_{m+1}',\cdots,t_d')\in\mathfrak{a}^+,
\end{aligned}}
we define two maps $\xi=\xi_\bt': X\times V\to\cF$ and $\gamma=\gamma_\bt': X\times V\to\Gamma$ as follows: For any $x\in X$ and $u\in V$, there uniquely exist $\xi(x,u)\in\cF$ and $\gamma(x,u)\in\Gamma$ such that
\eqlabel{defgam}{a_\bt' u\iota(x)=\xi(x,u)\gamma(x,u)}
by the definition of the fundamental domain $\cF$. 

For fixed $\bm_0\in\bZ^d\setminus\set{0}$, we define two maps $\bx:X\times V\to\bR^m$ and $\by:X\times V\to\bR^n$ satisfying the following:
\eqlabel{defxy'}{(\xi(x,u)^{tr})^{-1}\bm_0=\left(\begin{matrix} \bx(x,u)\\
\by(x,u)
\end{matrix}\right)\in\bR^m\times\bR^n.}
Let $\bx=(\bx_1,\cdots,\bx_m)$ and $\by=(\by_1,\cdots,\by_n)$.

Lemma \ref{siggam} and Lemma \ref{Vxe} still hold in this setting as follows.

\begin{lem}\label{siggam'}
For any $y\in Y$ and $u\in V$, we have $\sigma(a_{\bt'} uy)=\gamma(x,u)\sigma(y)$, where $x=\pi(y)$.
\end{lem}

\begin{lem}\label{Vxe'}
For $0<\eps\leq\frac{1}{2}$ and $x\in X$ denote by $V_{x,\eps}$ the set of elements $u\in V$ satisfying:
\begin{enumerate}
    \item $\|\xi(x,u)\|<\eps^{-1},$
    \item $|\bx_1(x,u)|> \eps^2\|\bm_0\|.$
\end{enumerate}
If $\eps>e^{-\kappa_2 \frac{\lfloor\bt'\rfloor}{d}}$ and $\textrm{ht}(x)<e^{\kappa_2 \frac{\lfloor\bt'\rfloor}{d}}$, then $m_H(V\setminus V_{x,\eps})\leq C_{13}\eps^{\frac{\kappa_3}{2}}$ for some constants $C_{13}>0$.
\end{lem}
In (2) of Lemma \ref{Vxe'}, we replaced $\|\bx(x,u)\|$ of Lemma \ref{Vxe} by $|\bx_1(x,u)|$. However, the proof of Lemma \ref{Vxe} still works if one replace $\set{\mb{0}_m}\times\bR^n$ in the definition of $\cM$ by $\set{0}\times\bR^{d-1}$.

Denote by 
$\mathfrak{B}(\bt',D)$ the box $$[-De^{t_1'},De^{t_1'}]\times\cdots\times[-De^{t_m'},De^{t_m'}]\times[-De^{t_1'},De^{t_1'}]\times\cdots[-De^{t_1'},De^{t_1'}]\subset\bR^d$$ for $C>0$. The following is a generalization of Proposition \ref{key} and here is the place where a nontrivial modification is needed.

\begin{prop}\label{key'}
Let $s>0$ and $\bm_0\in\bZ^d\setminus\set{0}$. For any $e^{-\kappa_2 s}\leq\eps\leq\frac{1}{2}$ and $x\in X$ with $\textrm{ht}(x)<e^{\kappa_2 s}$, let $V_{x,\eps}$ be the set as in Lemma \ref{Vxe}. Then
\begin{enumerate}
    \item $\set{\gamma(x,u)^{tr}\bm_0\in \bZ^d: u\in V_{x,\eps}}\subseteq\iota(x)^{tr}\mathfrak{B}(\bt',C_{14}\eps^{-1}\|\bm_0\|)$,
    \item For any $\bm\in\bZ^d$,
    $$m_H(\set{u\in V_{x,\eps}: \gamma(x,u)^{tr}\bm_0=\bm})\leq C_{16}\eps^{-3n}e^{-nt_1'-(t_{m+1}'+\cdots+t_d')}.$$
\end{enumerate}
\end{prop}
We remark that the estimate in this proposition is still \textit{tight} as Proposition \ref{key}: If we consider $\eps$ and $\textrm{ht}(x)$ as constants, as the points $\gamma(x,u)^{tr}\bm_0$ belongs to $\iota(x)^{tr}\mathfrak{B}(\bt',C_{14}\eps^{-1}\|\bm_0\|)$ and the latter contains $\asymp e^{nt_1'+(t_1'+\cdots+t_m')}$ points, the estimate in (2) is compatible with each $\bm\in \iota(x)^{tr}\mathfrak{B}(\bt',C_{14}\eps^{-1}\|\bm_0\|)$ obtaining the same weight $e^{-nt_1'-(t_1'+\cdots+t_m')}\asymp e^{-nt_1'-(t_{m+1}'+\cdots+t_d')}$.
\begin{proof}
Recall that there is the canonical measure-preserving bijection $\varphi$ between $M_{m,n}(\bR)$ and $H$ defined by $\varphi(A)=\left(\begin{matrix} \Id_m & A\\
0 & \Id_n\end{matrix}\right)$, there exists a constant $C_{14}>1$ such that $\varphi(B^{\bR^{mn}}(0,C_{14}^{-1}))\subseteq V\subseteq \varphi(B^{\bR^{mn}}(0,C_{14}))$, and $\|u\|\leq C_{14}$ for any $u\in V$.

(1) Recall that we have $\|\xi(x,u)\|\leq \eps^{-1}$, $\|u\|\leq C_{14}$. Hence, $\|\bx(x,u)\|$ and $\|\by(x,u)\|$ are bounded by $\eps^{-1}\|\bm_0\|$. By explicit matrix computation, it follows that
$$u^{tr}a_{\bt'}^{tr}(\xi(x,u)^{tr})^{-1}\bm_0\in \mathfrak{B}(\bt',C_{14}\eps^{-1}\|\bm_0\|).$$
By the definition of $\xi$ and $\gamma$ we have $\xi(x,u)^{-1}a_{\bt'} u\iota(x)=\gamma(x,u)$. Therefore,
$$\gamma(x,u)^{tr}\bm_0=\iota(x)^{tr}u^{tr}a_{\bt'} ^{tr}(\xi(x,u)^{tr})^{-1}\bm_0\in \iota(x)^{tr}\mathfrak{B}(\bt',C_{14}\eps^{-1}\|\bm_0\|).$$

(2) The equation $\gamma(x,u)^{tr}\bm_0=\bm$ can be written
\eqlabel{meqn'}{a_{\bt'}^{tr}(\xi(x,u)^{tr})^{-1}\bm_0=(u^{tr})^{-1}(\iota(x)^{tr})^{-1}\bm.} Let $v_{+}\in\bR^m$ and $v_{-}\in\bR^n$ be the vectors such that $\left(\begin{matrix} v_{+}\\
v_{-}\end{matrix}\right)=(\iota(x)^{tr})^{-1}\bm$. Here, $v_{+}$ and $v_{-}$ only depend on $\bm$, not on $u$.
For any $A\in \varphi^{-1}(V)$, by a straightforward computation we have 
\eqlabel{Aeqn'}{
\begin{aligned}
    (u^{tr})^{-1}(\iota(x)^{tr})^{-1}\bm&=\left(\begin{matrix} v_{+}\\
-A^{tr}v_{+}+v_{-}\end{matrix}\right),
\end{aligned}} where $u=\varphi(A)$. On the other hand, the left hand side of \eqref{meqn'} is equal to $(e^{t_1'}\bx_1(x,u),\cdots,e^{t_m'}\bx_m(x,u),e^{-t_{m+1}'}\by_1(x,u),\cdots,e^{-t_d'}\by_n(x,u))$.
Combining this with \eqref{Aeqn'}, if $u=\varphi(A)$ is a solution of \eqref{meqn'}, then
\eqlabel{strip'}{
\begin{aligned}
    A^{tr}v_{+}-v_{-}\in\mathfrak{T}(\bt',\|\by(x,u)\|),
\end{aligned}
}
denoting by $\mathfrak{T}(\bt',D)$ the small box
$$[-De^{-t_{m+1}'},De^{-t_{m+1}'}]\times\cdots\times[-De^{-t_d'},De^{-t_d'}]\subset\bR^n.$$
If $u\in V_{x,\eps}$, we have $\|\bx_1(x,u)\|> \eps^{2}\|\bm_0\|$ and 
$$\|\by(x,u)\|\leq \|(\xi(x,u)^{tr})^{-1}\bm_0\|\leq C_3\|\xi(x,u)\|\|\bm_0\|\leq C_3\eps^{-1}\|\bm_0\|$$ by \eqref{defxy'} and the definition of $V_{x,\eps}$ in Lemma \ref{Vxe'}. Moreover, $\|v_{+}\|>\eps^{2}\|\bm_0\|e^{t_1'}$ since $\|v_{+}\|\ge e^{t_1'}|\bx_1(x,u)|$. To sum up, a solution $u=\varphi(A)$ must be in a thin tube $\set{A\in M_{m,n}(\bR): A^{tr}v_{+}-v_{-}\in\mathfrak{T}(\bt',C_3\eps^{-1}\|\bm_0\|)}$, where $\|v_{+}\|>\eps^{2}\|\bm_0\|e^{t_1'}$. We can consider $A$ as in the Euclidean space $\bR^{mn}$, then the volume of the intersection of this tube and the ball $B^{\bR^{mn}}(0,C_{14})$ is \\$\ll\displaystyle\prod_{j=1}^n\left(C_{14}^{-1}\|v_{+}\|^{-1}(C_3\eps^{-1}\|\bm_0\|e^{-t_{m+j}'})\right)\ll \eps^{-3n}e^{-nt_1'-(t_{m+1}'+\cdots+t_d')}$. Therefore, for any $\bm\in\bZ^{d}$, the $m_H$-measure of the set of $u\in V_{x,\eps}$ satisfying $\gamma_x(u)^{tr}\bm_0=\bm$ is less than $C_{16}\eps^{-3n}e^{-nt_1'-(t_{m+1}'+\cdots+t_d')}$ for some constant $C_{16}>1$.
\end{proof}

We are now ready to prove an analog of Proposition \ref{lowcon}. The proof is almost same with Proposition \ref{lowcon}, but here we count the number of certain points in $\iota(x)^{tr}\mathfrak{B}(\bt',C_{14}\eps^{-1})$, instead of $B^{\bZ^d}(0,R)$.
\begin{prop}\label{lowcon'}
Let $0<s<\frac{\lfloor\bt\rfloor}{2d}$ and $\rho\ge e^{-\kappa_4\frac{\lfloor\bt\rfloor}{d}}$. For $g_0\in G$ and $b_0\in\bT^d$, let $y_0=g_0w(b_0)\hat{\Gamma}\in Y$. Let $x_0=\pi(y_0)\in X$ and assume $\textrm{ht}(x_0)\leq \rho^{-\frac{1}{10d^2(d-1)}}$. If $\zeta(b_0,\|g_0\|^{-1}e^{n(\frac{\lfloor\bt\rfloor}{d}-s)})\ge c_{11}\rho^{-2d}$, then
$$\nu_{\bt'}(B^{\bT^d}(p,\rho))\leq C_{22}\rho^{\frac{\kappa_3}{10d}}$$
for any $p\in\bT^d$.
\end{prop}

\begin{proof}
We define $\Theta:\bT^d\to\bT$ and $\Xi=\Theta(B^{\bT^d}(p,\rho))$ as in Proposition \ref{lowcon'}, then we also have
$$\nu_{\bt'}(\Theta^{-1}(\Xi))=\set{u\in V: \gamma(x_0,u)^{tr}e_1\cdot b\in\Xi}.$$
Let $\eps=\rho^{\frac{1}{5d}}$ and $V_{x_0,\eps}$ be the set as in Lemma \ref{Vxe'}. Note that the conditions $\eps=\rho^{\frac{1}{5d}}>e^{-\kappa_2\frac{\lfloor\bt'\rfloor}{d}}$ and $\textrm{ht}(x_0)<e^{\kappa_2\frac{\lfloor\bt'\rfloor}{d}}$ of Lemma \ref{Vxe'} and Proposition \ref{count} are satisfied since $\kappa_4\leq\frac{\kappa_2}{2000d^3}$ and $\lfloor\bt'\rfloor>\lfloor\bt\rfloor-2ds$. Then we have 
\eqlabel{estVxe'}{m_H(V\setminus V_{x_0,\eps})\leq C_{13}\eps m_H(V)} by Lemma \ref{Vxe'}. We now apply Proposition \ref{key'} for $\bm_0=e_1$. Denoting $\Omega:=\set{\bm\in \iota(x)^{tr}\mathfrak{B}(\bt',C_{14}\eps^{-1}): b\cdot\bm\in\Xi}$ and applying Proposition \ref{key'}, we have
\eqlabel{xiset3'}{\set{u\in V: \gamma(x_0,u)^{tr}e_1\cdot b\in\Xi}\subseteq (V\setminus V_{x,\eps})\cup\displaystyle\bigcup_{\bm\in\Omega}\set{u\in V_{x_0,\eps}:\gamma(x_0,u)e_1=\bm},}
\eqlabel{xiest'}{m_H(\set{u\in V_{x_0,\eps}:\gamma(x_0,u)e_1=\bm})\leq C_{16}\eps^{-3n}e^{-nt_1'-(t_1'+\cdots+t_m')}}
for any $\bm\in\bZ^d$.

Write $\mathfrak{B}:=\mathfrak{B}(\bt',C_{14}\eps^{-1})$ for simplicity. Since $\mathfrak{B}$ can be covered with $\ll e^{nt_1'+(t_1'+\cdots+t_m')-d\lfloor\bt'\rfloor}$ number of cubes with length $\eps^{-1}e^{\lfloor\bt'\rfloor}$, we can also cover $\iota(x)^{tr}\mathfrak{B}$ with $\iota(x)^{tr}Q_1',\cdots,\iota(x)^{tr}Q_N'$, where $Q_i'$'s are cubes with length $\eps^{-1}e^{\lfloor\bt'\rfloor}$ and $N\ll e^{nt_1'+(t_1'+\cdots+t_m')-d\lfloor\bt'\rfloor}$. Since $\|\iota(x)\|\leq C_{15}\textrm{ht}(x_0)^{d-1}$, the region $\iota(x)^{tr}\mathfrak{B}$ is covered with $Q_1,\cdots,Q_N$, where $Q_i$'s are cubes with length $R:=C_{15}\eps^{-1}\textrm{ht}(x_0)^{d-1}e^{\lfloor\bt'\rfloor}$.

Applying Lemma \ref{count} for each $Q_i$, we obtain
\eqlabel{Omega'}{
\begin{aligned}
|\Omega|\leq C_{21}\rho\displaystyle\sum_{i=1}^N|Q_i|&\ll \rho\eps^{-d}\textrm{ht}(x_0)^{d(d-1)}e^{d\lfloor\bt'\rfloor}N\\&=\rho\eps^{-d}\textrm{ht}(x_0)^{d(d-1)}e^{nt_1'+(t_1'+\cdots+t_m')}.
\end{aligned}}
Then the rest of the proof is the same as in Proposition \ref{lowcon}.
\end{proof}

Combining Proposition \ref{highcon'} and Proposition \ref{lowcon'}, we deduce the following Fourier decay estimate. 

\begin{prop}\label{Fourdecay''}
Let $\bt\in\mathfrak{a}^+$, $\|g_0\|\leq e^{\frac{\lfloor\bt\rfloor}{4}}$, $b\in\bT^d$, and $y_0=g_0w(b_0)\hat{\Gamma}$. Let $\rho=\max\left(e^{-\kappa_4 t},c_{11}^{-\frac{1}{2d}}\zeta(b_0,e^{\frac{\lfloor\bt\rfloor}{2}})^{-\frac{1}{2d}}\right)$ and $r=\rho^{\kappa_4^2}$. Assume $\rho>c_{12}$ and $\textrm{ht}(\pi(y_0))\leq\rho^{-\frac{1}{10d^2(d-1)}}$. Let $z$ and $\om$ be as in Proposition \ref{mainprop}. Then
$$|\widehat{\nu_{t,\om}}(\bm_0)|<c_{3}\rho^{\kappa_4^2}$$
for any $0<\|\bm_0\|<\rho^{-\kappa_4}$.
\end{prop}
Theorem \ref{mainthm'} follows from Proposition \ref{Fourdecay''} by the same procedure as in the proof of Theorem \ref{mainthm}.

\appendix
\section{Proofs of properties in §\ref{HH0H-} and §\ref{subfund}}\label{appendixA}
In this subsection, we verify the properties we stated in §\ref{HH0H-} and §\ref{subfund}.

We first verify \eqref{hdecompositionbound}. For $g=h_+h_0h_-$ let us write
$$h_+= \left(\begin{matrix} \Id_m& A_+\\
0&  \Id_n
\end{matrix}\right), \quad h_0=\left(\begin{matrix} Z_{11}& 0\\
0&  Z_{22}
\end{matrix}\right), \quad h_-=\left(\begin{matrix} \Id_m& 0\\
A_-&  \Id_n
\end{matrix}\right)$$
for $A_+\in \operatorname{Mat}_{m,n}(\bR)$, $A_-\in\operatorname{Mat}_{n,m}(\bR)$, $Z_{11}\in\operatorname{GL}_m(\bR)$, $Z_{22}\in\operatorname{GL}_n(\bR)$, and $\operatorname{det}(Z_{11})\operatorname{det}(Z_{22})=1$. Then by a straightforward calculation, we have
$$g=\left(\begin{matrix} Z_{11}+A_+Z_{22}A_-& A_+Z_{22}\\
Z_{22}A_-&  Z_{22}
\end{matrix}\right),\;\; g^{-1}=\left(\begin{matrix} Z_{11}^{-1}& -Z_{11}^{-1}A_+\\
-A_-Z_{11}^{-1}&  Z_{22}^{-1}+A_-Z_{11}^{-1}A_+
\end{matrix}\right).$$
It follows that all matrix coefficients of $A_+Z_{22}$, $Z_{22}$, $Z_{11}^{-1}$, and $A_-Z_{11}^{-1}$ are bounded $\ll \|g\|$. We now bound the matrix coefficients of $Z_{22}^{-1}$ and $Z_{11}$. We may write $Z_{22}^{-1}=\operatorname{det}(Z_{22})^{-1}\operatorname{adj}(Z_{22})$. Since $\operatorname{det}(Z_{22})$ is a degree $n$ polynomial of variables $(g_{ij})_{m+1\leq i,j\leq n}$ and $\cE$ is the locus of $\operatorname{det}(Z_{22})$, we have $|\operatorname{det}(Z_{22})|\gg \min\set{\operatorname{dist}(g,\cE),1}^n$. It follows that all matrix coefficients of $Z_{22}^{-1}$ are bounded by
$\ll \max\set{\operatorname{dist}(g,\cE)^{-n},1}\|g\|^{n-1}$. Similarly, all coefficients of $Z_{11}$ are bounded by $\ll \max\set{\operatorname{dist}(g,\cE)^{-m},1}\|g\|^{m-1}$, using $\operatorname{det}(Z_{11}^{-1})=\operatorname{det}(Z_{22})$. Hence, we obtain 
$$\|h_0\|\ll \max\set{\operatorname{dist}(g,\cE)^{-(d-1)},1}\|g\|^{d-2}.$$ Since $A_+=(A_+Z_{22})Z_{22}^{-1}$ and $A_-=(A_-Z_{11}^{-1})Z_{11}$, using \eqref{norm} we also estimate $$\|h_+\|, \|h_-\| \ll \max\set{\operatorname{dist}(g,\cE)^{-(d-1)},1}\|g\|^{d-1}.$$ Thus \eqref{hdecompositionbound} is proved.

We now verify the properties in §\ref{subfund}. Recall that a continuous function $F:G\to\bR_{>0}$ is given by
$$F(g)^2=\frac{(\sum_{i,j}|g_{ij}|^2)(\sum_{i,j}|(g^{-1})_{ij}|^2)}{\sum_{i,j}|g_{ij}|^2+\sum_{i,j}|(g^{-1})_{ij}|^2},$$
and satisfies $F(g)\leq\min((\sum_{i,j}|g_{ij}|^2)^{\frac{1}{2}},(\sum_{i,j}|(g^{-1})_{ij}|^2)^{\frac{1}{2}})\leq 2F(g)$ for any $g\in G$. Since the maps $g\mapsto \sum_{i,j}|g_{ij}|^2$ and $g\mapsto \sum_{i,j}|(g^{-1})_{ij}|^2$ are proper, $F$ is also proper. We will construct a fundamental domain $\cF$ of $X=G/\Gamma$ so that it consists of every elements in $\cF^\circ:=\set{g\in G: F(g)< F(g\gamma),\;\forall \gamma\in\Gamma\setminus\set{\operatorname{id}}}$ and properly chosen elements from its boundary. We can observe that the closure of $\cF^\circ$ is contained in the set $\set{g\in G: F(g)\leq F(g\gamma),\;\forall \gamma\in\Gamma\setminus\set{\operatorname{id}}}$ and the boundary $\overline{\cF^\circ}\setminus\cF^\circ$ has measure zero with respect to $m_G$.

(1) Let us first show that for any $x\in X$, there exists $g\in\overline{\cF^\circ}$ such that $x=g\Gamma$. For any $x\in X$, let $\Lambda_x$ be the corresponding lattice in $\bR^d$, and $\Lambda_x^*$ be the dual lattice of $\Lambda_x$, i.e.
\eq{\Lambda_x^*:=\set{v^*\in\bR^d: v^*\cdot v\in\bZ^d, \quad \forall v\in\Lambda_x}.}
 We refer \cite{Cas71} for basic properties of the dual lattice. For any $g\in \phi^{-1}(x)$, the column vectors of $g$ are vectors in $\Lambda_x$ and the row vectors of $g^{-1}$ are vectors in $\Lambda_x^*$, where $\phi:G\to X$ is the canonical projection. 
Observe that for any $R>0$, there are only finitely many vectors in either $\Lambda_x$ or $\Lambda_x^*$ with Euclidean length $\leq R$. It follows that the function $F|_{\phi^{-1}(x)}:\phi^{-1}(x)\to\bR_{>0}$ that is restricted to the subset $\phi^{-1}(x)$ attains its minimum. Hence, for any $x\in X$, there exists $g\in\overline{\cF^\circ}$ such that $x=g\Gamma$. 

For each $\gamma\in\Gamma$, let $\cF_\gamma:=\set{g\in G: F(g)\leq F(g\gamma)}$. Note that $\overline{\cF}=\displaystyle\bigcap_{\gamma\in\Gamma}\cF_{\gamma}$, $\partial\cF_\gamma=\set{g\in G: F(g)=F(g\gamma)}$, and $\partial\cF\subseteq\displaystyle\bigcup_{\gamma\in\Gamma}\partial\cF_\gamma$. If $g$ is in the interior $\cF^\circ$, then $F(g)<F(g\gamma)$ for any $\gamma\in\Gamma$. Hence, for any $x\in \phi(\cF^\circ)$, there uniquely exists $g\in \cF^\circ$ such that $x=g\Gamma$, where $\phi: G\to X$ is the canonical projection map. Let $\iota(x)$ be the unique element in $\cF^\circ$, then $\iota:\phi(\cF^\circ)\to\cF^\circ$ is a continuous measure-preserving map satisfying $\phi\circ\iota=\operatorname{id}_{\phi(\cF^\circ)\to\phi(\cF^\circ)}$.

We can choose elements from the boundary so that for any $x\in X$ there uniquely exists $g\in\cF$ such that $x=g\Gamma$. Since the $m_G$-measure of the boundary $\overline{\cF^\circ}\setminus\cF^\circ$ is zero, the map $\iota$ is extended to be a measure-preserving map from $X$ to $\cF$. It also follows that $m_G(\cF)=m_G(\cF^\circ)=m_X(\phi(\cF^\circ))=m_X(X)=1$.

(2) We next prove the estimate \eqref{iotaht}. Let $\lambda_j(\Lambda)$ denote the $j$-th successive minimum of a lattice $\Lambda\subset\bR^d$ i.e. the infimum of $\lambda$ such that the ball $B^{\bR^d}(0,\lambda)$ contains $j$ independent vectors. By Minkowski's second theorem \cite[Theorem I in Chapter VIII]{Cas71}, $\lambda_1(\Lambda_x)\lambda_2(\Lambda_x)\cdots\lambda_d(\Lambda_x)\asymp 1$ for any $x\in X$. 

Since $\lambda_j(\Lambda_x)\ge\lambda_1(\Lambda_x)\ge\textrm{ht}(x)^{-1}$ for any $1\leq j\leq d-1$, we have $\lambda_d(\Lambda_x)\ll\textrm{ht}(x)^{d-1}$. For $x\in\phi(\cF^\circ)$ let $g=\iota(x)$, i.e. $g\in\cF$ is the element with $x=g\Gamma$. Then $\lambda_d(\Lambda_x)\ll\textrm{ht}(x)^{d-1}$ implies that there exists $\gamma\in\Gamma$ such that the Euclidean length of each column vectors of $g\gamma$ is $\ll\textrm{ht}(x)^{d-1}$. Hence, We have $|g_{ij}|\leq F(g)\leq F(g\gamma)\ll \textrm{ht}(x)^{d-1}$ for any $i,j$. In order to estimate the components of $g^{-1}$, we may consider the dual lattice $\Lambda_x^*$. By Mahler's inequality \cite[Theorem VI in Chapter VIII]{Cas71}, $\lambda_1(\Lambda_x)\lambda_d(\Lambda_x^*)\ll 1$, so we have $\lambda_d(\Lambda_x^*)\ll \lambda_1(\Lambda_x)\leq \textrm{ht}(x)$. In other words, there exists $\gamma\in\Gamma$ such that the Euclidean length of each row vectors of $(g\gamma)^{-1}$ is $\ll\textrm{ht}(x)$. It follows that $|(g^{-1})_{ij}|\leq F(g)\leq F(g\gamma)\ll\textrm{ht}(x)$ for any $i,j$. Thus, $\|\iota(x)\|\ll\textrm{ht}(x)^{d-1}$ for any $x\in \phi(\cF^\circ)$.

(3) We prove the estimate \eqref{fundest}. We first show that 
$$m_G(\set{g\in\cF: \textrm{dist}(g,\partial\cF)<r})\ll r^{\frac{1}{2(d-1)^2+1}}.$$ For each $R>0$, denote by $\Upsilon(R)$ the set of elements $g\in G$ such that $(\sum_{i,j}|g_{ij}|^2)^{\frac{1}{2}}\leq R$. We make use of the following volume estimate of $\Upsilon(R)$ and counting estimate of $\Upsilon(R)\cap\Gamma$ (See \cite[Example 1.6 and Appendix 1]{DRS93} or \cite{EM93}):
\eqlabel{VolCountUp}{m_G(\Upsilon(R))\asymp R^{\frac{d(d-1)}{2}}, \quad |\Upsilon(R)\cap\Gamma|\asymp R^{\frac{d(d-1)}{2}}.}

In (2) we showed that if $\textrm{ht}(x)\leq\eps^{-1}$, then the column vectors of $g=\iota(x)$ has Euclidean length $\ll\eps^{-(d-1)}$, so $(\sum_{i,j}|g_{ij}|^2)^{\frac{1}{2}}\ll\eps^{-(d-1)}$. It means that the set $\cF\setminus\Upsilon(R)$ is contained in $\set{g\in\cF: \textrm{ht}(g\Gamma)\gg R^{\frac{1}{d-1}}}$, hence
\eqlabel{Upvolest}{m_G(\cF\setminus\Upsilon(R))\ll R^{-\frac{d}{d-1}}.}

Observe that if $g\in\Upsilon(R)$ and $\gamma\notin\Upsilon(R^3)$, then $F(g)<F(g\gamma)$, so $g$ is not on the boundary $\partial\cF_\gamma$. It follows that $$\partial\cF\cap\Upsilon(R)\subseteq\displaystyle\bigcup_{\gamma\in\Upsilon(R^3)\cap\Gamma}(\partial\cF_\gamma \cap \Upsilon(R)).$$ Denote by $\cF(r)$ and $\cF_\gamma(r)$ the set of $g$ such that $\textrm{dist}(g,\partial\cF)<r$ and $\textrm{dist}(g,\partial\cF_\gamma)<r$, respectively. Note that each $\partial\cF_\gamma$ is the set of zeros of the equation $S_\gamma(g)=0$, where $S_\gamma(g)=F(g\gamma)-F(g)$, hence a $(\dim G-1)$-dimensional hypersurface of $G$. One can check that the first derivative $\triangledown S_\gamma (g):\mathfrak{g}\to\mathfrak{g}$ is bounded away from zero uniformly on $\gamma\in \Gamma\setminus\set{\operatorname{id}}$. Moreover, if $g\in\Upsilon(R)$ and $\gamma\notin\Upsilon(R^3)$, then the norm of the derivatives of higher degrees are bounded above by $\ll\|g\|^2+\|g\gamma\|^2\ll R^8$. Thus we have $S_\gamma(\operatorname{exp}(v)g)=S_\gamma(g)+\triangledown S_\gamma (g)\cdot v+O(R^8\|v\|^2)$, hence $\partial\cF_\gamma$ is locally approximated in the neighborhood $B^G(g,R^{-10})$ by the zero set of the linear equation $S_\gamma(g)+\triangledown S_\gamma (g)\cdot v=0$.  
It follows that for $r<R^{-10}$, $$m_G(\cF_\gamma(r)\cap\Upsilon(R))\ll \frac{r}{R^{-10}}m_G(\cF_\gamma(R^{-10})\cap\Upsilon(R))\ll  R^{\frac{d(d-1)}{2}+10}r$$ by \eqref{VolCountUp}. Therefore, using \eqref{VolCountUp} and \eqref{Upvolest}, we have
\eq{\begin{aligned}
m_G(\cF(r))&=m_G(\cF(r)\cap\Upsilon(R))+m_G(\cF(r)\setminus\Upsilon(R))\\
&\leq m_G(\displaystyle\bigcup_{\gamma\in\Upsilon(R^3)\cap\Gamma}(\cF_\gamma(r) \cap \Upsilon(R)))+m_G(\cF\setminus\Upsilon(R))\\
&\ll\displaystyle\sum_{\gamma\in\Upsilon(R^3)\cap\Gamma}m_G(\cF_\gamma(r)\cap\Upsilon(R))+R^{-\frac{d}{d-1}}\\
&\ll |\Upsilon(R^3)\cap\Gamma|R^{\frac{d(d-1)}{2}+10}r+R^{-\frac{d}{d-1}}\\
&\ll R^{2d(d-1)+10}r+R^{-\frac{d}{d-1}}
\end{aligned}}
for $r<R^{-10}$. Choosing $R$ to satisfy $r=R^{-10d(d-1)}$, we obtain $$m_G(\cF(r))\ll r^{\frac{1}{10d^2}}.$$ Combining with \eqref{cspmsr}, we have shown that
\eqlabel{appendix1}{m_G(\cF\setminus\cF_0(r,\eps))\ll r^{\frac{1}{10d^2}}+\eps^d,}
where we denote $\cF_0(r,\eps):=\set{g\in\cF: \textrm{ht}(g\Gamma)\leq \eps^{-1}, \textrm{dist}(g,\partial\cF)\ge r}$.

Denote by $\cE(r)$ and $\cE^{-1}(r)$ by the set of $g$ such that $\operatorname{dist}(g,\cE)<r^{\frac{1}{20d}}$ and $\operatorname{dist}(g^{-1},\cE)=\operatorname{dist}(g,\cE^{-1})<r^{\frac{1}{20d}}$, respectively. It is now sufficient to estimate $m_G\big(\cE^{-1}(r)\cap \cF\big)=m_G\big(\cE(r)\cap (\cF)^{-1}\big)$. Note that $r^{\frac{1}{40d}}$-neighborhood of $(\cF_0(r^{\frac{1}{50d}}, C_4r^{\frac{1}{1000d^2}}))^{-1}$ is still contained in $(\cF_0(r^{\frac{1}{4}}, C_4r^{\frac{1}{20d}}))^{-1}$. Since $\cE$ is a $(d^2-2)$-dimensional analytic closed submanifold of $G$, we have \eq{\begin{aligned}
    m_G\big(\cE(r)\cap (\cF_0(r^{\frac{1}{50d}}, &C_4r^{\frac{1}{1000d^2}}))^{-1}\big)\\&\ll r^{\frac{1}{40d}} m_G\big(\cE(r^{\frac{1}{2}})\cap (\cF_0(r^{\frac{1}{4}}, C_4r^{\frac{1}{20d}}))^{-1}\big)\ll r^{\frac{1}{40d}}.
\end{aligned}}
By \eqref{appendix1} we also have $m_G\big(\cF\setminus \cF_0(r^{\frac{1}{50d}},C_4r^{\frac{1}{1000d^2}})\big)\ll r^{\frac{1}{100d^3}}$. It follows that $m_G\big(\cE^{-1}(r)\cap \cF\big)\ll r^{\frac{1}{100d^3}}$. Let $\kappa_3:=\frac{1}{100d^3}$. Again using \eqref{appendix1} we obtain
\eq{\begin{aligned}
    m_G(\cF\setminus\cF(r,\eps))&\leq m_G(\cF\setminus\cF_0(r,\eps))+ m_G(\cE^{-1}(r)\cap \cF)\\&\ll r^{\frac{1}{10d^2}}+\eps^d+r^{\frac{1}{100d^3}}\ll \max\set{r^{\kappa_3},\eps^d}.
\end{aligned}}

\section{Effective Weyl's criterion on $\bT$}
This subsection is devoted to proving the estimate \eqref{Weyl} using an effective version of Weyl's criterion. Neither the result nor the proof is new, but we prove it here to express the estimate in a suitable form using the function $\zeta$. To prove the estimate \eqref{Weyl} it is enough to show the following statement.
\begin{lem}\label{Weyl''}
Let $\Xi=[x_0-\rho,x_0+\rho]$ be an interval in $\bT$, where $x\in\bT$ and $0<\rho<\frac{1}{2}$. For any $\alpha\in\bT\setminus\bQ$ and $T\in\bN$, 
\eqlabel{Weyl'}{\frac{1}{T}|\set{0\leq k\leq T-1: k\alpha\in\Xi}|\ll\rho+\rho^{-1}\zeta(\alpha,T)^{-1},}
where $\zeta(\alpha, T)=\min\set{N\in\bN: \min_{1\leq q\leq N}\|q\alpha\|_{\bZ}\leq \frac{N^2}{T}}$.
\end{lem}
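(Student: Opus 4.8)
The plan is to prove \eqref{Weyl'} by a standard effective-equidistribution estimate for the rotation by $\alpha$ --- via the Erd\H{o}s--Tur\'an inequality (or, self-contained, a Selberg majorant trigonometric polynomial for the arc $\Xi$) --- the only non-routine input being the translation of the definition of $\zeta(\alpha,T)$ into a lower bound for the denominators $\|m\alpha\|_{\bZ}$ that appear. Write $Z=\zeta(\alpha,T)$ and $N=|\set{0\le k\le T-1:\,k\alpha\in\Xi}|$. If $Z=1$, then by definition $\|\alpha\|_{\bZ}\le 1/T$, and since $\rho<\tfrac12$ the right-hand side of \eqref{Weyl'} equals $\rho+\rho^{-1}\ge\rho^{-1}>1\ge N/T$; so I may assume $Z\ge 2$ and put $M:=Z-1\ge 1$.

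First I would apply the Erd\H{o}s--Tur\'an inequality with cut-off $M$ to the $T$ points $k\alpha$ ($0\le k\le T-1$) and the interval $\Xi$ of length $2\rho$, and bound each geometric sum by $\bigl|\sum_{k=0}^{T-1}e^{2\pi i mk\alpha}\bigr|\le\tfrac12\|m\alpha\|_{\bZ}^{-1}$, obtaining
\[
N\ \ll\ \rho T+\frac{T}{M}+\sum_{m=1}^{M}\frac{1}{m\,\|m\alpha\|_{\bZ}}.
\]
The key step is to estimate the last sum using $\zeta$. By minimality of $Z$, the defining inequality fails at $N'=Z-1$, i.e.\ $\min_{1\le q\le Z-1}\|q\alpha\|_{\bZ}>\tfrac{(Z-1)^2}{T}$; in particular $\|m\alpha\|_{\bZ}>\tfrac{(Z-1)^2}{T}$ for every $1\le m\le M=Z-1$. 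Therefore
\[
\sum_{m=1}^{M}\frac{1}{m\,\|m\alpha\|_{\bZ}}\ <\ \frac{T}{(Z-1)^2}\sum_{m=1}^{Z-1}\frac1m\ \ll\ \frac{T\,(1+\log Z)}{(Z-1)^2},
\]
so that, dividing by $T$, $N/T\ \ll\ \rho+\tfrac1Z+\tfrac{1+\log Z}{(Z-1)^2}$.

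It remains to simplify this into the shape of \eqref{Weyl'}. Since $\rho<\tfrac12$ we have $\rho^{-1}>1$, hence $\tfrac1Z\le\tfrac{\rho^{-1}}{Z}$; and the elementary bound $\sup_{Z\ge 2}\tfrac{Z(1+\log Z)}{(Z-1)^2}<4$ (using $Z-1\ge Z/2$ and monotonicity of $(1+\log Z)/Z$) gives $\tfrac{1+\log Z}{(Z-1)^2}\ll\tfrac1Z\le\tfrac{\rho^{-1}}{Z}$. Combining, $N/T\ll\rho+\rho^{-1}Z^{-1}=\rho+\rho^{-1}\zeta(\alpha,T)^{-1}$, which is \eqref{Weyl'}. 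The invocation of Erd\H{o}s--Tur\'an and the geometric-sum bound are entirely standard; the one point needing genuine care is the last paragraph's bookkeeping --- in particular checking that the logarithm from $\sum_{m\le Z-1}m^{-1}$ is absorbed by $\rho^{-1}Z^{-1}$ \emph{uniformly} over $\rho\in(0,\tfrac12)$ and integers $Z\ge 2$, which is exactly why one must take $M=Z-1$: a larger cut-off would reintroduce denominators $\|m\alpha\|_{\bZ}$ with $m\ge Z$ that the definition of $\zeta$ gives no control over.
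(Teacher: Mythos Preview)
Your proof is correct and follows essentially the same approach as the paper: both arguments majorize $\mathds{1}_\Xi$ by a function with controlled Fourier coefficients, bound the resulting exponential sums via $\|m\alpha\|_{\bZ}^{-1}$, and use the minimality in the definition of $\zeta$ to get $\|m\alpha\|_{\bZ}>(Z-1)^2/T$ for $1\le m\le Z-1$. The only cosmetic difference is that you invoke the packaged Erd\H{o}s--Tur\'an inequality (incurring a harmless $\log Z$ factor that you then absorb), whereas the paper builds the smooth majorant $\phi=\psi_\rho*\mathds{1}_{[x_0-2\rho,x_0+2\rho]}$ by hand, exploits the decay $|\widehat{\phi}(m)|\ll\rho^{-1}m^{-2}$ to handle the tail $|m|\ge\zeta$, and thereby avoids the logarithm altogether.
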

\begin{proof}
For any $0<\eps<\rho$, there exists a non-negative approximation function $\psi_\rho\in C_c^{\infty}(\bT)$ such that $\Supp \psi_\rho\subseteq [-\rho,\rho]$, $\int_{\bT}\psi_\rho(x)dx=1$, and $\|\psi_\rho''\|_{\infty}\ll \rho^{-2}$. Then for any $m\in\bZ\setminus\set{0}$, we have $|\widehat{\psi_\rho}(m)|\ll\min(1,|m|^{-2}\rho^{-2})$. Let $\phi=\psi_\rho*\mathds{1}_{[x_0-2\rho,x_0+2\rho]}$, then $\phi(x)\ge\mathds{1}_{\Xi}(x)$ for any $x\in\bT$. We also have 
$$|\widehat{\phi}(m)|\ll \rho\min(1,|m|^{-2}\rho^{-2})=\min(\rho,|m|^{-2}\rho^{-1})$$ for $m\in\bZ\setminus\set{0}$. In order to get an upper bound of the left-hand side of \eqref{Weyl'}, it suffices to estimate $\frac{1}{T}\displaystyle\sum_{0\leq k\leq T-1}\phi(k\alpha)$. We can expand this summation as follows using the Fourier expansion of $\phi$:
\eq{
\begin{aligned}
\frac{1}{T}\displaystyle\sum_{0\leq k\leq T-1}\phi(k\alpha)&=\frac{1}{T}\displaystyle\sum_{m\in\bZ}\widehat{\phi}(m)\displaystyle\sum_{0\leq k\leq T-1}e^{2\pi imk\alpha}\\
&=\widehat{\phi}(0)+\frac{1}{T}\displaystyle\sum_{0<|m|< \zeta(\alpha,T)}\widehat{\phi}(m)\frac{1-e^{2\pi i mT\alpha}}{1-e^{2\pi i m\alpha}}\\&+\displaystyle\sum_{|m|\geq\zeta(\alpha,T)}\widehat{\phi}(m)\left(\frac{1}{T}\displaystyle\sum_{0\leq k\leq T-1}e^{2\pi imk\alpha}\right).
\end{aligned}}

We now estimate each term as follows:
\eq{\widehat{\phi}(0)=\int_{\bT}\mathds{1}_{[x_0-2\rho,x_0+2\rho]}(x)dx=4\rho,}
\eq{\frac{1}{T}\displaystyle\sum_{0<|m|< \zeta(\alpha,T)}\widehat{\phi}(m)\frac{1-e^{2\pi i mT\alpha}}{1-e^{2\pi i m\alpha}}\ll\rho\zeta(\alpha,T)^{-1},}
\eq{\begin{aligned}
\displaystyle\sum_{|m|\geq\zeta(\alpha,T)}\widehat{\phi}(m)\left(\frac{1}{T}\displaystyle\sum_{0\leq k\leq T-1}e^{2\pi imk\alpha}\right)&\ll\rho^{-1}\displaystyle\sum_{|m|\geq\zeta(\alpha,T)}|m|^{-2}\\&\ll\rho^{-1}\zeta(\alpha,T)^{-1},
\end{aligned}}
using $|\widehat{\phi}(m)|\ll \rho$ and $\left|\frac{1-e^{2\pi i mT\alpha}}{1-e^{2\pi i m\alpha}}\right|\ll \|m\alpha\|_{\bZ}^{-1}\leq T(\zeta(\alpha,T)-1)^{-2}$ for the second estimate, and $|\widehat{\phi}(m)|\ll\rho^{-1}|m|^{-2}$ and $\left|\frac{1}{T}\displaystyle\sum_{0\leq k\leq T-1}e^{2\pi imk\alpha}\right|\leq 1$ for the third estimate. Hence, we obtain
\eq{\begin{aligned}
\frac{1}{T}|\set{0\leq k\leq T-1: k\alpha\in\Xi}|&\leq\frac{1}{T}\displaystyle\sum_{0\leq k\leq T-1}\phi(k\alpha)\ll \rho+\rho^{-1}\zeta(\alpha,T)^{-1}.
\end{aligned}}
\end{proof}

\begin{thebibliography}{}

\bibitem[AELM20]{AELM20}
M. Aka, M. Einsiedler, H. Li, and A. Mohammadi, \emph{On effective equidistribution for
quotients of $\SL(d,\bR)$}, Israel Journal of Mathematics, 236, 365–391 (2020)

\bibitem[BFLM11]{BFLM11}
J. Bourgain, A. Furman, E. Lindenstrauss, S. Mozes, \emph{Stationary measures and equidistribution for orbits of non-abelian semigroups on the torus}, J. Am. Math. Soc. 24(1), 231–280 (2011)

\bibitem[Bur90]{Bur90}
Marc Burger, \emph{Horocycle flow on geometrically finite surfaces}, Duke Math. J. 61 (1990),
no. 3, 779–803. MR 1084459

\bibitem[BV16]{BV16}
Tim Browning and Ilya Vinogradov. \emph{Effective Ratner theorem for $\SL(2, R)\ltimes R2$
and gaps in $\sqrt{n}$ modulo $1$}, Journal of the London Mathematical Society, 94(1):61–
84, 2016.

\bibitem[Cas71]{Cas71}
John W.S.Cassels, \emph{An introduction to the geometry of numbers. Corrected reprint of the 1971 edition}, Classics in Mathematics, Springer-Verlag, Berlin, 1997

\bibitem[CY19]{CY19}
Sam Chow, Lei Yang, \emph{An effective Ratner equidistribution theorem for multiplicative Diophantine approximation on planar lines}, https://arxiv.org/abs/1902.06081

\bibitem[DKL16]{DKL16}
K. Dabbs, M. Kelly, and H. Li, \emph{Effective equidistribution of translates of maximal
horospherical measures in the space of lattices}, Journal of Modern Dynamics,
10:229–254, 2016.

\bibitem[DRS93]{DRS93}
W. Duke, Z. Rudnick, P. Sarnak, \emph{Density of integer points on
affine homogeneous varieties}, Duke Math. J. 71 (1993), 143–180.

\bibitem[EM93]{EM93}
A. Eskin and C. McMullen. \emph{Mixing, counting and equidistribution on Lie groups}, Duke Math.
J. 71 (1993), no. 1, pp. 181–209.

\bibitem[EM04]{EM04}
Noam D. Elkies and Curtis T. McMullen. \emph{Gaps in $\sqrt{
n}$ mod $1$ and ergodic theory}. Duke Math. J.,
123(1):95–139, 2004

\bibitem[EM20]{EM20}
M. Einsiedler, A. Mohammadi, \emph{Effective arguments in unipotent dynamics}, preprint

\bibitem[EMS98]{EMS98}
Alex Eskin, Shahar Mozes, and Nimish Shah. \emph{Unipotent flows and counting lattice points on homogeneous varieties}. Annals of mathematics, pages 253–299, 1996.

\bibitem[EMMV20]{EMMV20}
M. Einsiedler, G. Margulis, G. Mohammadi, and A. Venkatesh, \emph{Effective equidistribution and property tau}, J. Amer. Math. Soc. 33 (2020), 223-289.

\bibitem[EMV09]{EMV09}
M. Einsiedler, G. Margulis, A. Venkatesh, \emph{Effective equidistribution for closed orbits of semisimple groups
on homogeneous spaces}, Invent. Math. 177 (2009), 137–212

\bibitem[EO06]{EO06}
A. Eskin and H. Oh, \emph{Ergodic theoretic proof of equidistribution of Hecke points}, Ergodic Theory Dynam.
Systems 26 (2006), 163–167.

\bibitem[EW09]{EW09}
M. Einsiedler and T. Ward. \emph{Ergodic theory with a view towards number
theory}, Springer GTM, 2009

\bibitem[FF03]{FF03}
L. Flaminio and G. Forni, \emph{Invariant distributions and time averages for horocycle flows}, Duke Math. J.
119 (2003), 465–526.


\bibitem[GT12]{GT12}
B. Green and T. Tao, \emph{The quantitative behaviour of polynomial orbits on nilmanifolds},
Annals of Math. 175 (2012), 465–540.

\bibitem[KM96]{KM96}
D. Kleinbock and G. Margulis, \emph{Bounded orbits of nonquasiunipotent flows on homogeneous spaces}, SinaUı’s Moscow Seminar on Dynamical Systems, Amer. Math. Soc. Transl. Ser. 2, vol. 171,
Amer. Math. Soc., Providence, RI, 1996, pp. 141–172. MR 1359098

\bibitem[KM12]{KM12}
D. Kleinbock and G. Margulis, \emph{On effective equidistribution of expanding translates of certain orbits in the space of lattices}, Number theory, analysis and geometry, Springer, New York, 2012,
pp. 385–396. MR 2867926

\bibitem[KSW17]{KSW17}
Dmitry Kleinbock, Ronggang Shi, and Barak Weiss, \emph{Pointwise equidistribution
with an error rate and with respect to unbounded functions}, Mathematische
Annalen, 367(1):857–879, 2017.

\bibitem[KW08]{KW08}
D. Kleinbock and B. Weiss, \emph{Dirichlet’s theorem on diophantine approximation and homogeneous flows}, J. Mod. Dyn. 2 (2008), 43–62.

\bibitem[LM14]{LM14}
Elon Lindenstrauss and Gregory Margulis, \emph{Effective estimates on indefinite
ternary forms}, Israel Journal of Mathematics, 203(1):445–499, 2014

\bibitem[HdS19]{HdS19}
W. He and N. de Saxc\'{e}, \emph{Linear random walks on the torus}, (2019), https://arxiv.org/abs/1910.13421.

\bibitem[HLL20]{HLL20}
W. He, T. Lakrec, and E. Lindenstrauss, \emph{Affine random walk on the torus}, International Mathematics Research Notices (2020), doi:10.1093/imrn/rnaa322

\bibitem[Lot15]{Lot15}
M. Lotz, \emph{On the volume of tubular neighborhoods of real algebriac varieties}, Proceedings of the American Mathematical Society 143, no. 5 (2015): 1875–89.

\bibitem[Mrg00]{Mrg00}
G. Margulis, \emph{Problems and conjectures in rigidity theory}, in Mathematics: frontiers and perspectives, AMS,
Providence, RI, 2000, pp. 161–174

\bibitem[Mrg04]{Mrg04}
G. Margulis, \emph{On Some Aspects of the Theory of Anosov Systems}, Springer Monographs in Mathematics.
Springer, Berlin, 2004. (A translation of Phd Thesis, Moscow State University, 1970.)

\bibitem[Mrk10]{Mrk10}
Jens Marklof. \emph{The asymptotic distribution of Frobenius numbers}. Invent. Math., 181(1):179–207, 2010.

\bibitem[Mo12]{Mo12}
A. Mohammadi, \emph{A special case of effective equidistribution with explicit constants},
Ergodic Theory Dynam. Systems 32 (2012), no. 1, 237–247. MR 2873169

\bibitem[Mor05]{Mor05}
D. Witte Morris, \emph{Ratner’s Theorems on Unipotent Flows}, Chicago Lectures in Mathematics, University of
Chicago Press, Chicago (2005). xii+203 pp

\bibitem[MS10]{MS10}
J. Marklof and A. Str\"{o}mbergsson, \emph{The distribution of free path lengths in the periodic Lorentz gas and
related lattice point problems}, Annals of Math. 172 (2010), 1949–2033.

\bibitem[Pri18]{Pri18}
 T. Prinyasart, \emph{An Effective Equidistribution of Diagonal Translates of Certain Orbits in
$\ASL(3, \bZ)\backslash \ASL(3, \bR)$}, PhD Thesis, University of California San Diego, 2018

\bibitem[Rat91a]{Rat91a}
M. Ratner, \emph{On Raghunathan’s measure conjecture}, Annals of Mathematics, Second Series,
134(3):545–607, 1991.

\bibitem[Rat91b]{Rat91b}
 M. Ratner, \emph{Raghunathan’s topological conjecture and distributions of unipotent flows}, Duke Mathematical Journal, 63(1):235–280, 1991.

\bibitem[Sar81]{Sar81}
Peter Sarnak, \emph{Asymptotic behavior of periodic orbits of the horocycle flow and Eisenstein series}, Comm. Pure Appl. Math. 34 (1981), no. 6, 719–739. MR 634284

\bibitem[Sha96]{Sha96}
N. Shah, \emph{Limit distributions of expanding translates of certain orbits on homogeneous spaces}, Proc. Indian
Acad. Sci. (Math. Sci.) 106 (1996), 105–125.

\bibitem[Sha09]{Sha09}
N. Shah, \emph{Asymptotic evolution of smooth curves under geodesic flow on hyperbolic manifolds}, Duke Math.
J. 148 (2009), 281–304.

\bibitem[Sha10]{Sha10}
N. Shah, \emph{Expanding translates of curves and Dirichlet-Minkowski theorem on linear forms}, J. Amer. Math.
Soc. 23 (2010), 563–589.

\bibitem[Shi19]{Shi19}
Ronggang Shi, \emph{Expanding Cone and Applications to Homogeneous Dynamics, International Mathematics Research Notices}, rnz052, https://doi.org/10.1093/imrn/rnz052

\bibitem[Sie45]{Sie45}
C. Siegel, \emph{A mean value theorem in geometry of numbers}, Ann. Math. 46 (1945),340-347.

\bibitem[SU15]{SU15}
Peter Sarnak and Adri\'{a}n Ubis, \emph{The horocycle flow at prime times}, Journal de
Mathmatiques Pures et Appliques, 103(2):575-618, 2015.

\bibitem[SV20]{SV20}
A. Str\"{o}mbergsson and P. Vishe. \emph{An effective equidistribution result for $SL(2,\bR)\ltimes(\bR^2)^{\oplus k}$ and
application to inhomogeneous quadratic forms}, Journal of the London Mathematical Society, 102(1): 143-204, 2020. 

\bibitem[Str04]{Str04}
A. Str\"{o}mbergsson, \emph{On the uniform equidistribution of long closed horocycles},
Duke Math. J. 123 (2004), no. 3, 507-547. MR 2068968

\bibitem[Str13]{Str13}
A. Str\"{o}mbergsson, \emph{On the deviation of ergodic averages for horocycle flows}, Journal of Modern Dynamics, 7 (2013), 291–328.

\bibitem[Str15]{Str15}
A. Str\"{o}mbergsson, \emph{An effective Ratner equidistribution result for $SL(2, \bR) \ltimes \bR^2$}
, Duke Math. J. 164 (2015), no. 5, 843–902. MR 3332893

\bibitem[Ubi16]{Ubi16}
Adri\'{a}n Ubis, \emph{Effective equidistribution of translates of large submanifolds in
semisimple homogeneous spaces}, International Mathematics Research Notices,
2017(18):5629–5666, 2016.

\bibitem[Ven10]{Ven10}
Akshay Venkatesh, \emph{Sparse equidistribution problems, period bounds and subconvexity},
Ann. of Math. (2) 172 (2010), no. 2, 989–1094. MR 2680486


\end{thebibliography}
\def\cprime{$'$} \def\cprime{$'$} \def\cprime{$'$}
\providecommand{\bysame}{\leavevmode\hbox to3em{\hrulefill}\thinspace}
\providecommand{\MR}{\relax\ifhmode\unskip\space\fi MR }
\providecommand{\MRhref}[2]{%
  \href{http://www.ams.org/mathscinet-getitem?mr=#1}{#2}
}

\end{document}